\numberwithin{equation}{section}
\theoremstyle{definition}
\newtheorem{theorem}{Theorem}[section]
\newtheorem{lemma}[theorem]{Lemma}
\newtheorem{definition}[theorem]{Definition}
\newcommand{\y}[1]{\lVert #1 \rVert}
\newcommand{\Y}[1]{\left\lvert#1\right\rvert}
\newcommand{\under}[2]{\underset{#1}{\underbrace{#2}}}
\newcommand{\R}{\mathbb{R}}
\newcommand{\Z}{\mathbb{Z}}
\newcommand{\Raeps}{\mathcal{R}_\varepsilon}
\newcommand{\Reps}{\mathcal{R}^*_\varepsilon}
\newcommand{\Repsu}{\mathcal{R}^*_\varepsilon u_\varepsilon}
\newcommand{\Eeps}{\mathcal{E}_\varepsilon}
\newcommand{\Eepsut}{\mathcal{E}_\varepsilon (t, u_\varepsilon , z_\varepsilon )}
\newcommand{\ocweight}{\varpi}
\newcommand{\Jepsut}{\mathcal{E}^{V}_\varepsilon (u_\varepsilon)}
\newcommand{\Eloct}{\mathcal{E}^{loc}_\varepsilon (u_\varepsilon ,z_\varepsilon )}
\newcommand{\asarrow}{\xlongrightarrow[\text{a.s.}]{\varepsilon \rightarrow 0} }
\newcommand{\epsarrow}{\xlongrightarrow{\varepsilon \rightarrow 0} }
\newcommand{\coeff}{\sigma_{\varepsilon, x,  y}}
\newcommand{\estconst}{\nu}
\NewDocumentCommand{\gagl}{ O{u} O{s} O{p} }
{\left[ #1 \right]_{W^{#2,#3}}
}
\NewDocumentCommand{\tgagl}{ O{u} O{s} O{2} O{Q} }
{\left[ #1 \right]_{W^{#2,#3}(#4)}
}
\newcommand{\eps}{\varepsilon}
\def\Xint#1{\mathchoice
{\XXint\displaystyle\textstyle{#1}}
{\XXint\textstyle\scriptstyle{#1}}
{\XXint\scriptstyle\scriptscriptstyle{#1}}
{\XXint\scriptscriptstyle\scriptscriptstyle{#1}}
\!\int}
\def\XXint#1#2#3{{\setbox0=\hbox{$#1{#2#3}{\int}$}
\vcenter{\hbox{$#2#3$}}\kern-.5\wd0}}
\def\dashint{\Xint-}
\newcommand{\uproman}[1]{\uppercase\expandafter{\romannumeral#1}}
\definecolor{dblue}{RGB}{47,82,143}
\definecolor{dgreen}{RGB}{49,142,33}
\definecolor{grey}{RGB}{118,113,113}
\title{Homogenization of rate-independent elastoplastic spring network models with non-local 
random fields}
\date{}
\author[1,2]{Simone Hermann \thanks{simone.hermann@mathematik.uni-freiburg.de}}
\affil[1]{\footnotesize 
{Cluster of Excellence livMatS @ FIT – Freiburg Center for Interactive
Materials and Bioinspired Technologies,
University of Freiburg,
Georges-Köhler-Allee 105, 79110 Freiburg, Germany}}
\affil[2]{Department for Applied Mathematics, University of Freiburg, Hermann-Herder-Str. 10, 79104 Freiburg, Germany}
\begin{document}

\maketitle

\textbf{Keywords:} Spring-network models, Discrete systems, Stochastic homogenization,
Rate-independent systems, Elastoplasticity, Non-local functionals, 
Bio-inspired material models, Fiber-reinforcement. 

~

\textbf{MSC Classification:} 74C10, 74Q15, 26A33, 74B05, 74A50.

~

\begin{abstract}
We investigate the time-evolution of elastoplastic materials reinforced by randomly distributed long-range interactions. Starting from a 
rate-independent system on a discrete spring lattice that combines local linearized elasticity, gradient-regularized plasticity and stochastic non-local links modeling stiff fibers, we establish a discrete-to-continuum limit in the energetic formulation. We prove that as the lattice spacing tends to zero, an evolutionary solution of the discrete system converges to the unique energetic solution of a continuum limit problem. The limiting continuum model couples classical elastoplasticity with a non-local energy featuring fractional-order interactions  
that capture the homogenized influence of random long-range reinforcements. These results extend previous static homogenization studies by rigorously treating path-dependent dissipation and showing existence, uniqueness and Lipschitz continuity of the continuum evolutionary solution. The work therefore provides a mathematical foundation for simulating time-dependent mechanical response of fiber-reinforced composites with random architecture.
\end{abstract}

\newpage

\section{Introduction}
Nature has evolved remarkable structural solutions to mechanical challenges through biological composites which in many cases contain fibrous networks such as the peel of citrus fruits or the fibrous mesocarp of the coconut (\textit{Cocos nucifera}) \cite{jentzsch2022damage, seidel2010fruit, Coconut_paper_fibers}. 
A particularly outstanding example of this material class
that achieves exceptional damage protection properties is the peel of the pomelo (\textit{Citrus maxima}), which consists of a soft foam-like tissue and a network of vascular fiber bundles \cite{thielen2013structure}.
While such fiber-reinforced composites have been experimentally studied extensively, advanced mathematical models may help to provide a deeper understanding of the underlying energy dissipation mechanisms.
This motivates our investigation into discrete networks with random non-local interactions, which mimic the way stiff fibers transmit forces over long distances in a softer matrix.

A powerful mathematical tool for studying such materials is the homogenization of discrete models. 
Discrete conductance models with random coefficients have been studied in the context of homogenization across various frameworks~\cite{neukamm2018stochastic, Heida1, Heida2, HeidaHermann}. These models provide a versatile tool to describe heterogeneous materials with complex microstructures and to derive their effective macroscopic properties via homogenized limit models.

For many spring network models, homogenization leads to a local continuum description. However, recent works~\cite{Heida2, HeidaHermann} have demonstrated that, under suitable assumptions, such models can also exhibit non-local homogenized limits.
In particular, the work~\cite{HeidaHermann} developed a homogenization approach for stationary problems on discrete lattices with randomly distributed long-range interactions. The random structure was modeled by assigning i.i.d.\ Bernoulli variables to all pairs of points in the lattice, with the connection probability decaying with distance. A distinctive feature of~\cite{HeidaHermann} is that all non-local interactions are assumed to have the same strength. This leads to spatially non-stationary random fields and precludes the use of standard ergodic theorems typically employed in stochastic homogenization. Nevertheless, this approach provides a robust framework that captures both local and long-range interactions in a unified model, suitable for describing the effective elastic behavior of materials reinforced by long fibers.

However, many materials of practical relevance exhibit not only elastic but also plastic or dissipative behavior. In such cases, the material response is often well described by rate-independent systems, where the evolution depends on the loading path but not on the rate at which it is applied.
The mathematical theory of rate-independent systems, developed in the works of Mielke and collaborators~\cite{ mielkeRoubicek, MIELKE2}, provides a robust variational framework for these problems, combining stability conditions and energy balance with a dissipation potential.

The present paper advances the stationary stochastic homogenization results of~\cite{HeidaHermann} by developing an evolutionary, rate-independent model that incorporates linearized elastoplasticity, gradient plasticity, and hardening. In contrast to the more abstract and simplified setting of~\cite{HeidaHermann}, we formulate a detailed framework that captures genuine elastoplastic material behavior, enabling the description of time-dependent dissipative phenomena. This refined model is specifically tailored to represent the mechanical response and dissipation mechanisms observed in biological composites such as the pomelo peel, thus providing a direct link between mathematical analysis and experimentally accessible material properties.
Furthermore, this framework enables the computational prediction of effective material properties through numerical simulations that capture how random fiber distributions influence the macroscopic mechanical behavior of biological composites.
\section{Material model}
 
The remarkable toughness of materials such as pomelo peel or bio-inspired composites is often attributed to a network of long, fiber-like reinforcements bridging distant points inside a softer matrix.
A convenient mathematical idealization of this architecture is a spring lattice: each lattice node represents a material point, and edges encode load-carrying fibers.  Springs are taken to be linear Hookean for the elastic part of the response, while plastic slips evolve non-linearly through a convex dissipation potential.  
In our model, the matrix material is assumed to operate in the small-strain regime, so all local (nearest-neighbor) elastic interactions are described by linearized elasticity. Nonlinearity in the matrix response is introduced exclusively through the evolution of plastic deformations, modeled via a convex, rate-independent dissipation potential and gradient-regularized hardening.

In contrast, the long-range fiber connections are modeled as perfectly undeformable links: any relative displacement between connected points directly contributes to the energy without any elastic compliance. This reflects the physical observation that stiff fiber bundles in biological tissues, such as vascular bundles in pomelo peel, transmit forces over long distances with negligible extension compared to the surrounding matrix.

\subsection{Mathematical model}
This section establishes the mathematical framework for our analysis. We introduce the core definitions and formulate the key assumptions that are presumed to hold throughout the remainder of this paper. 

Let $Q\subset \mathbb{R}^d$, $d\geq 1$, be a bounded Lipschitz domain and $0 < \varepsilon < 1$. Then the underlying lattice is given by $Q_\varepsilon \coloneqq Q \cap \mathbb{Z}_\varepsilon^d$, where $\mathbb{Z}_\varepsilon^d \coloneqq \varepsilon \mathbb{Z}^d$.
All functions considered in the discrete setting are defined in the way $w_\varepsilon : \mathbb{Z}_\varepsilon^d \rightarrow \mathbb{R}^d$ with $w_\varepsilon (x) = 0$ for all $x \in \mathbb{Z}_\varepsilon^d \setminus Q_\varepsilon$.
Furthermore, we consider the following difference-type operators in the discrete setting:
\begin{align*}
    \partial_i^\varepsilon w_\varepsilon (x) &\coloneqq \frac{w_\varepsilon(x+\varepsilon e_i) - w_\varepsilon(x) }{\varepsilon} ,\\
    \nabla_\varepsilon w_\varepsilon (x) &\coloneqq \left(  \partial_1^\varepsilon w_\varepsilon (x),..., \partial_d^\varepsilon w_\varepsilon (x)  \right), \\
    \nabla^s_\varepsilon w_\varepsilon (x) &\coloneqq \frac{1}{2} \left( \nabla_\varepsilon w_\varepsilon(x) + (\nabla_\varepsilon w_\varepsilon (x) )^\top \right).
\end{align*}
Thus, $\nabla^s_\varepsilon$ denotes the symmetric part of the discrete gradient $\nabla_\varepsilon$. The standard notations $\nabla^s$ and $\nabla$ are used for the respective continuum operators.
As usual in elasticity theory, we denote the deformation at each grid point $x\in Q_\varepsilon$ with $u_\varepsilon(x) \in \mathbb{R}^d$. Then, the strain is given by $\nabla^s_\varepsilon u_\varepsilon $ and in the elastoplastic framework it is assumed that it can be additively decomposed into its elastic part $e^\mathrm{el}_\varepsilon$ and its plastic part $z_\varepsilon \in \mathbb{R}_{\text{sym}, 0} ^{d\times d}$, where 
\begin{align*}
    \R_{\text{sym}, 0} ^{d\times d} = \lbrace A \in \R^{d \times d} ~\vert ~ A = A^T ~\text{and}~ \text{tr}(A)=0 \rbrace.
\end{align*}
Thus, the elastic part turns into
\begin{align*}
    e_\varepsilon^\mathrm{el} = \nabla ^s_\varepsilon u_\varepsilon -z_\varepsilon.
\end{align*}
In our model, the internal energy of the matrix material is given by
\begin{align}\label{eq:Eepsloc}
    \Eloct   \coloneqq \varepsilon^{d} \sum_{x\in \Z_\varepsilon^d} e_\varepsilon^\mathrm{el}(x): \bold{A}(x): e_\varepsilon^\mathrm{el} 
  + z_\varepsilon(x) : \bold{H}(x) : z_\varepsilon(x) + \kappa \Y{\nabla_\varepsilon  z_\varepsilon (x)}^2 ,
\end{align}
where $\bold{A}$ and $\bold{H}$ are positive and symmetric fourth-order tensors and $\kappa > 0$. For isotropic materials it is
$\bold{A}(x): X=2\mu(x) X+\lambda(x)\text{tr}(X)\mathrm{Id}$, where $\lambda$ and $\mu$ are the Lamé constants.
In this work it is assumed that the minimal eigenvalues $\lambda_{\bold{A}}^\mathrm{min}, \lambda_{\bold{H}}^\mathrm{min}$ of the tensors $\bold{A}$ and $\bold{H}$ satisfy the relation 
\begin{align}
   0 < \lambda_{\bold{A}}^\mathrm{min} < \lambda_{\bold{H}}^\mathrm{min}.
\end{align}
The first term of the discrete local energy \eqref{eq:Eepsloc} accounts for energy contributions of purely elastic effects, whereas the second term describes the energy contribution due to hardening effects.
The quadratic gradient term $\kappa\lvert\nabla_\varepsilon z_\varepsilon\rvert^{2}$
suppresses microscopic oscillations and ensures compactness.

The long-range fiber connections are introduced via conductances $\ocweight $ between any grid points $x,y \in Q_\varepsilon$. These conductances are assumed to be i.i.d. Bernoulli random variables, i.e. $\ocweight (z_1,z_2) : \mathbb{Z}^d\times \mathbb{Z}^d \rightarrow \lbrace 0,1 \rbrace$ with probabilities for a connection given by
\begin{align*}
    \mathbb{P}\left( \ocweight (z_1,z_2) =1 \right)  &= 
\begin{cases}
   \Y{z_1-z_2} ^{-d-ps}& ~\text{if}~ \Y{z_1-z_2} >0, \\
0& ~\text{else}.
\end{cases}
\end{align*}
Here $d\in \mathbb{N}$, $s\in (0,1) $,  and
\begin{align*}
     p &\in \tfrac{2d}{d+2(s-1)}  
     & \text{if}~ d>2, \\
     p &\in   [2, \infty) &\text{else.}
\end{align*}

In the previous paper \cite{HeidaHermann}, the bound $\tfrac{2d}{d-2}$ on $p$ for $d>2$ was necessary for the embedding $H^1(Q)^d \hookrightarrow L^p(Q)^d$. In the current paper the stricter bound $\tfrac{2d}{d+2(s-1)}$ is needed, as it ensures the embedding $H^1(Q)^d \hookrightarrow W^{s,p}(Q)^d$ and hence the consistency of the space definition $W^{s,p}(Q)^d\cap H^1(Q)^d$ for the deformation component.
Furthermore, all parameters must satisfy the relation
\begin{align*}
    d > ps .
\end{align*}
The free energy of a single fiber with endpoints $x$ and $y$ is given by
\begin{align*}
   E_{xy} = \left( \Y{x+u(x)-y-u(y)} - \Y{x-y} \right)^2,
\end{align*}
where constants have been omitted for simplicity.
We restrict our study to the regime of small deformations so that the energy can be linearized as
\begin{align*}
    E_{xy} \approx \Y{\left( u(x)-u(y) \right) \cdot \frac{x-y}{\Y{x-y}}}^2.
\end{align*}
Here, the relative displacement of the fiber endpoints $u(x)-u(y)$ is projected onto the unit vector $\frac{x-y}{\Y{x-y}}$ pointing along the fiber direction. This operation isolates the magnitude of stretch or compression along the fiber direction while ignoring all other motions. 
Consequently, the energy formulation intentionally discards all components of relative displacement that are perpendicular to the fiber axis, as they are assumed not to contribute to the stored energy. This is a common and effective modeling choice in the mechanics of fiber-reinforced materials to represent the highly anisotropic nature of the reinforcement.

In this paper, we consider more general energy functionals with $p$-growth, which enable the modeling of a broader range of material classes.

The free energy of the random long-range connections is given by 
\begin{align}\label{eq:1st_version_fiber_energy}
\Jepsut=    \varepsilon^{2d}
\sum_{x \in Q_\varepsilon} \sum_{y \in Q_\varepsilon} 
\varepsilon^{-d-ps} \ocweight{\left(\tfrac{x}{\varepsilon}, {\tfrac{y}{\varepsilon}}\right)} \Y{\left( u(x)-u(y) \right) \cdot \frac{x-y}{\Y{x-y}}}^p.
\end{align}
The two scaling factors in the above expression for $\Jepsut$ are crucial for the discrete-to-continuum transition. The prefactor $\varepsilon^{2d}$ serves as the volume measure for the double summation over the lattice $Q_\varepsilon$. This ensures that the sum correctly scales to a double integral over the domain $Q \times Q$ in the limit $\varepsilon \rightarrow 0$, in direct analogy to a multidimensional Riemann sum.

The second factor $\varepsilon^{-d-ps}$ is a more subtle probabilistic scaling. It is calibrated to precisely balance the decay rate of the connection probability, which can be written as $\mathbb{P}\left( \ocweight (\frac{x}{\varepsilon},\frac{y}{\varepsilon}) =1 \right) 
= \left(\frac{\Y{x-y}}{\varepsilon}\right)^{-d-ps}$. This choice is essential as it ensures that the expected value of the energy functional remains bounded and non-trivial, since $d>ps$. Consequently, this balance guarantees the convergence to a non-trivial, deterministic homogenized functional, preventing the total energy of the fibers from either vanishing or diverging in the limit.
A schematic overview of the discrete lattice and randomized fiber network is provided in \Cref{fig:fig}. 
While for visualization purposes the fibers are drawn as rigid beams connecting two nodes, note that in the mathematical model the fibers are represented as conductances between pairs of points in the domain. The spheres at the ends of the beams indicate the lattice nodes that serve as endpoints of these fiber interactions.
\begin{figure}[!htb] 
    \centering
    \includegraphics[width=\linewidth]{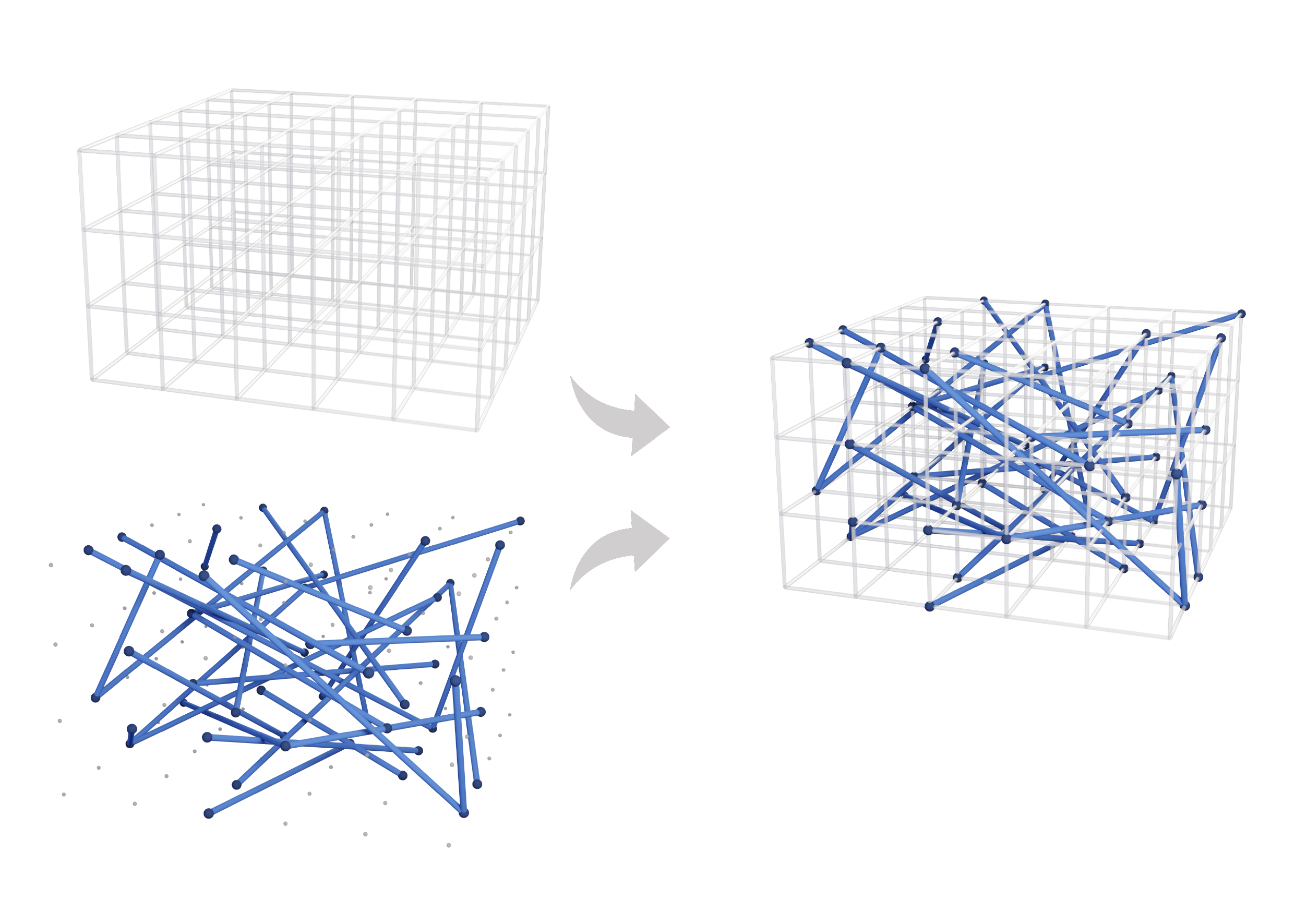} 
    \caption{Schematic representation of the discrete lattice model. 
\textit{Top left}: Nearest-neighbor interactions forming the elastic matrix.
\textit{Bottom left}: Example realization of long-range conductances representing random fibers.
\textit{Right}: Composite network combining matrix springs and fiber reinforcements in the discrete setting.
    }
    \label{fig:fig}
\end{figure}

\noindent With $\coeff \coloneqq  \ocweight{\left(\tfrac{x}{\varepsilon},\frac{y}{\varepsilon}\right)} \left(\tfrac{\Y{x-y}}{\varepsilon}\right)^{d+ps} $, the expression in \eqref{eq:1st_version_fiber_energy} can be reformulated as
\begin{align}
    \Jepsut &= \varepsilon^{2d}
\sum_{x \in Q_\varepsilon} \sum_{y \in Q_\varepsilon} \coeff \frac{\Y{\left( u(x)-u(y) \right) \cdot \tfrac{x-y}{\Y{x-y}}}^p}{\Y{x-y}^{d+ps}} . 
\end{align}
The overall energy of the material system is then
\begin{align}\label{eq:def discrete energy}
\Eeps (t, u_\varepsilon, z_\varepsilon ) = \Jepsut + \Eloct - \mathcal{F}_\varepsilon (t, u_\varepsilon) ,
\end{align}
where 
\begin{align*}
    \mathcal{F}_\varepsilon (t, u_\varepsilon) &= \varepsilon^d \sum_{x\in Q_\varepsilon} f_\varepsilon(t,x) u_\varepsilon(x).
\end{align*}
Furthermore, we define the discrete dissipation functional
\begin{align}\label{eq:def discrete dissipation}
\Psi_\varepsilon (u_\varepsilon , z_\varepsilon ) \coloneqq \varepsilon^d \sum_{x\in \Z_\varepsilon^d} \rho (z_\varepsilon (x)),
\end{align}
where $\rho : \R^{d \times d} \rightarrow [0, \infty )$ is measurable, convex and positively homogeneous of degree 1, i.e. $\rho (a  v) = a \rho (v)$ for any $a \in \R_{\geq 0}$ and any $v\in \R^{d \times d}$.

The corresponding limit functional is given by
\begin{align} \label{eq:limit_func}
\mathcal{E}(t,u,z) \coloneqq \mathcal{E}^{V}(u)  + \mathcal{E}^{loc}(u,z) - \mathcal{F}(t,u),
\end{align}
where
\begin{align*}
\mathcal{E}^{V}(u) & \coloneqq 
\int_Q \int_Q \frac{\Y{\left( u(x)-u(y) \right) \cdot \tfrac{x-y}{\Y{x-y}}}^p}{\Y{x-y}^{d+ps}} \dif x \dif y , \\
\mathcal{E}^{loc}(u,z) & \coloneqq \int_Q e^\mathrm{el}(x) : \bold{A}(x) : e^\mathrm{el}(x) + z(x) : \bold{H}(x) : z(x) + \kappa \Y{\nabla  z (x)}^2 \dif x , \\
\mathcal{F}(t,u) & \coloneqq \int_{Q} f(t,x) u(x) \dif x .
\end{align*}
Here, the elastic part is defined analogously to the discrete setting as $ e^\mathrm{el}(x) \coloneqq \nabla ^s u(x) -z(x)$. The dissipation functional associated to the limit problem is
\begin{align}\label{eq:def dissipation functional}
\Psi (u,z) = \int_{\R^d} \rho (z(x))  \dif x.
\end{align}

Throughout the paper we use the convention that $C$ denotes a generic positive constant that may change from line to line.

Unless stated otherwise, all assumptions introduced in this section are imposed globally and are sufficient for all subsequent results.

\subsection{Operator Framework for Discrete-to-Continuum transition}
In this section, we introduce the tools required to pass from discrete lattice functions to continuum fields and vice versa, summarizing their properties and convergence behavior.
To establish a rigorous foundation for discussing the convergence properties of discrete functions $u_\varepsilon : Q_\varepsilon \rightarrow \mathbb{R}^d$ toward their continuum counterparts $u:Q \rightarrow \mathbb{R}^d$, we introduce the piece-wise constant reconstruction operator $\Reps$. 
This operator transforms discrete functions $u_\varepsilon : Q_\varepsilon \rightarrow \mathbb{R}^d$ into piecewise constant functions $\Repsu : Q \rightarrow \mathbb{R}^d$ through the following mapping
\begin{align*}
\Repsu (x) := 
u_\varepsilon (z) \qquad\text{if } z\in \mathbb{Z}^d_\varepsilon \text{ and } x \in \left( z+\left(-\tfrac{\varepsilon}{2} , \tfrac{\varepsilon}{2}  \right]^d \right)\cap Q .
\end{align*}
In simple words, the operator $\Reps$ assigns to each point within the $\varepsilon$-cell $\left(z+\left(-\tfrac{\varepsilon}{2} , \tfrac{\varepsilon}{2}  \right]^d \right) \cap Q$ the function value of $u_\varepsilon$ evaluated at the cells grid center $z\in \mathbb{Z}^d_\varepsilon$.
This enables us to work with discrete functions in the continuum setting. For the other way around its adjoint operator $\mathcal{R}_\varepsilon$ can be used. It is a discretization operator that transforms functions $u : Q \rightarrow \mathbb{R}^d$ into discrete functions $\mathcal{R}_\varepsilon u : Q_\varepsilon \rightarrow \mathbb{R}^d $ through 
\begin{align*}
    \mathcal{R}_\varepsilon u(x) := \dashint_{\left( x+\left(-\tfrac{\varepsilon}{2} , \tfrac{\varepsilon}{2}  \right]^d \right)\cap Q } u(y) \dif y .
\end{align*}
Therefore, it operates by assigning each point $x \in Q_\varepsilon$ the mean value of $u$ on the entire cube $\left(x+\left(-\tfrac{\varepsilon}{2} , \tfrac{\varepsilon}{2}  \right]^d \right)  \cap Q$.
This operator framework allows us to establish the convergence of $u_\varepsilon$ to $u$ by showing that $\Repsu \rightarrow u$ in the appropriate function space.
Furthermore, with the help of the operator $\Reps$ we can transform discrete summations into continuous integrals through the identity
\begin{align}\label{eq:integral_identity}
\varepsilon^d \sum_{x \in Q_\varepsilon} u_\varepsilon (x) = \int_Q \Repsu (x)  \dif x
\end{align}

\begin{figure}[!htb] 
    \centering
    \includegraphics[width=\linewidth]{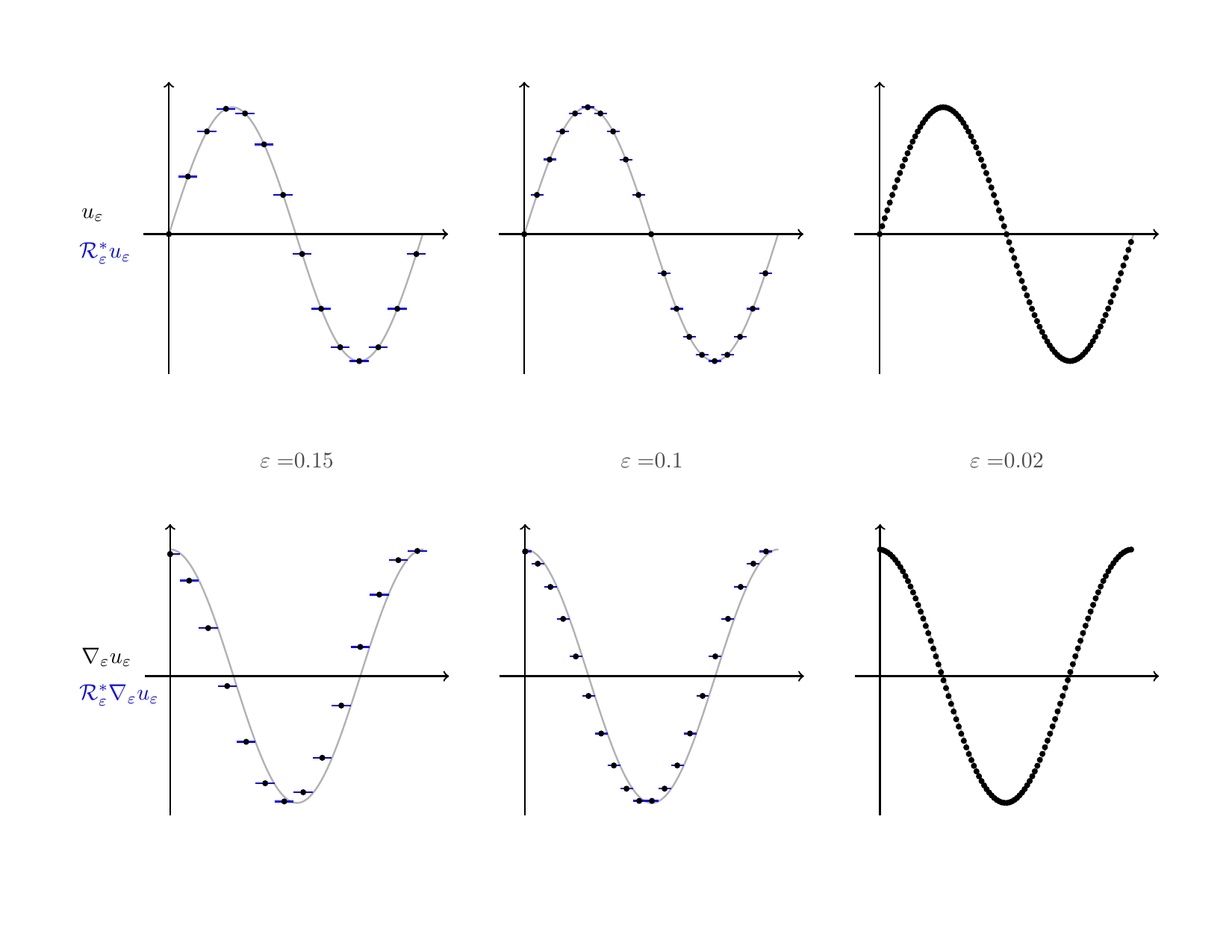} 
    \caption{One-dimensional example for the piecewise reconstruction of a discrete function and its discrete gradient for $\varepsilon \in \{0.15,0.1,0.05 \} $.
    \textit{Top }: The function $u(x)=\sin{(\pi x)}$ (\textcolor{black!40}{grey}) on $Q=[0,2]$, the discrete function $u_\varepsilon (x) = \sin{(\pi x)}$ (black dots) on $Q_\varepsilon = \varepsilon \mathbb{Z} \cap [0,2]$ and the corresponding operator $\Repsu$ (\textcolor{blue}{blue}) on $Q=[0,2]$. 
\textit{Bottom }: The function $\nabla u$ (\textcolor{black!40}{grey}), the discrete gradient $\nabla_\varepsilon u_\varepsilon$ (black dots) and its reconstruction $\Reps \nabla_\varepsilon u_\varepsilon$ (\textcolor{blue}{blue}).     }
    \label{fig:Reps}
\end{figure} 
\noindent if the domain satisfies $Q = \bigcup_{x \in Q_\varepsilon} [x-\frac{\varepsilon}{2}, x+\frac{\varepsilon}{2}]^d$. For arbitrary domains $Q$, this condition generally fails. However, since $\lim_{\varepsilon \rightarrow 0} \varepsilon^d \Y{ Q_\varepsilon} = \Y{Q}$, where $\Y{Q_\varepsilon} = \# \{ Q \cap \Z^d_\varepsilon \} $, any discrepancy introduced by applying equation \eqref{eq:integral_identity} vanishes in the limit process. Consequently, we adopt equation \eqref{eq:integral_identity} as a notational convenience without explicitly tracking the associated approximation error.

We now introduce the precise notion of convergence between discrete and continuum functions that we employ in this work.

Convergence in $L^2$ for a sequence of discrete functions $u_\varepsilon$ is understood as strong convergence of their piecewise constant reconstructions:
\begin{align*}
    \Repsu \rightarrow u ~\text{strongly in}~L^2(Q)^d.
\end{align*}
A central difficulty arises when defining convergence in $H^1$, since the reconstructions $\Repsu$ are piecewise constant and thus do not belong to $H^1(Q)^d$, see \Cref{fig:Reps}.
To overcome this, we additionally require convergence of the discrete gradients, specifically that
\begin{align*}
    \Reps \nabla_\varepsilon u_\varepsilon \rightarrow \nabla u ~\text{strongly in}~L^2(Q)^{d\times d}.
\end{align*}
Accordingly, strong convergence in $H^1$ in the discrete-to-continuum sense will always refer to the joint property
\begin{align*}
    \Repsu \rightarrow u ~\text{strongly in}~L^2(Q)^d \qquad \text{and} \qquad \Reps \nabla_\varepsilon u_\varepsilon \rightarrow \nabla u ~\text{strongly in}~L^2(Q)^{d\times d}.
\end{align*}
Furthermore, all compactness results of the continuum setting can be translated to the discrete setting by making use of the piecewise-constant reconstruction operator $\Reps$.

In the next section, we extend our analysis to rate-independent systems.
\section{Evolution of rate-independent system with plasticity}
The discrete conductance model in \cite{HeidaHermann}, while capturing essential elastic behavior, represents only equilibrium configurations of the system. To develop a comprehensive understanding of material behavior, we must consider how these systems evolve through time as external conditions change.
This evolutionary extension allows us to model realistic material behavior where plastic deformation accumulates irreversibly under varying loads. The rate-independence captures the essential physics that doubling the loading speed doubles the system response speed, while the underlying material relationships remain unchanged. This property makes the mathematical framework particularly suitable for quasi-static processes in elastoplasticity, where inertial effects are negligible compared to the relevant loading timescales.

First we give a short introduction in the general theory and concepts of rate-independent systems that is adapted by \cite{MIELKE2, mielkeLipschitz, mielkeRoubicek, neukamm2018stochastic}. 

\subsection{Governing equations of rate-independent systems} 
For a given initial value $y_0$ in a Banach space $Y$, we aim to find a solution of the rate-independent system associated to the energy functional $\mathcal{E}: I \times Y \rightarrow \R $ and the 1-homogeneous dissipation functional $\Psi : Y \times Y \rightarrow [0, \infty )$ that is described by the following evolutionary variational inequality
\begin{align}
\langle \text{D}\mathcal{E}\left(t,y(t)\right) , v- \dot{y}(t) \rangle + \Psi (v) - \Psi (\dot{y}(t)) \geq 0  \qquad \forall v \in Y \tag{VI}. \label{variationalIeuality}
\end{align}
In the case of convex energy functionals $\mathcal{E}$ the given variational inequality is equivalent to the energetic formulation:\\
\begin{center}
\begin{minipage}{0.8\linewidth}
Find a function $y:I \rightarrow Y$ such that $y_0=y(0)$ that satisfies the following stability and energy balance conditions 
\begin{align}
& \mathcal{E}(t,y(t)) \leq \mathcal{E}(t,\tilde{y}) + \Psi(\tilde{y}-y(t)) \qquad \text{for all } \tilde{y} \in Y \tag{S} \label{stability} \\
&\mathcal{E}(t,y(t)) + \int_0^t \Psi(\dot{y}(s) ) \dif s = \mathcal{E}(0,y(0)) + \int_0^t \partial_s \mathcal{E}(s,y(s)) \dif s \tag{E} \label{energyBalance}
\end{align}
for all $t\in I=[0,T]$. \\
\end{minipage}	
\end{center}
Equivalently to \eqref{stability} one can say that $y(t) \in \mathcal{S}(t)$ for all $t\in I$, where the set of stable states $\mathcal{S}(t)$ is defined as
\begin{align*}
\mathcal{S}(t)\coloneqq \left\{ y\in Y ~\vert ~ \mathcal{E}(t,y) < \infty ~ \text{and}~ \mathcal{E}(t,y) \leq \mathcal{E} (t, \tilde{y}) + \Psi(\tilde{y}-y) ~\text{for all}~ \tilde{y} \in Y  \right\}.
\end{align*}
We need the following notions to state the subsequent existence result.

\begin{definition}[Stable sequence, cf. \cite{mielkeRoubicek} Sec. 2.1.3]
A sequence $(t_n, y_n)_{n \in \mathbb{N}}$ is called a \textbf{stable sequence} of the rate-independent system associated to the energy functional $\mathcal{E}$ and the dissipation functional $\Psi$ if $\sup_{n \in \mathbb{N}} \mathcal{E}(t_n, y_n) < \infty$ and $y_n \in \mathcal{S}(t_n)$ for all $n \in \mathbb{N}$.
\end{definition} 

\begin{definition}[Energetic solution, see \cite{mielkeRoubicek} Definition 2.1.2.]
A function $y : I \rightarrow Y $ is called an \textbf{energetic solution} of the rate-independent system associated to the energy functional $\mathcal{E}$ and the dissipation functional $\Psi$  if $t \mapsto \partial_t \mathcal{E}(t, y(t))$ is integrable and if the stability \eqref{stability} and the energy balance \eqref{energyBalance} hold for all $t \in I$.
\end{definition}

\subsection{Existence of solutions}
There are various existence results in literature for different kind of rate-independent systems. We cite a rather general existence result that is presented in section 2.1 of \cite{mielkeRoubicek}. 
They make the following assumptions:
\begin{align}
&\textit{Dissipation distance:} ~\tag{I} \label{assumptionI}   \\
&\Psi: Y \rightarrow [0,\infty] ~\text{is 
1-homogeneous, lower semi-continuous and satisfies} \nonumber \\
&\qquad (a)~ \forall y_1, y_2 \in Y:  \Psi(y_1-y_2) = 0 \Leftrightarrow y_1 =y_2, \nonumber\\
&\qquad (b)~  \forall y_1, y_2, y_3 \in Y:  \Psi(y_1-y_3) \leq \Psi(y_1-y_2) + \Psi(y_2-y_3). \nonumber \\
\hspace{4pt} \nonumber \\
&\textit{Compactness of sublevels:}  ~ \tag{II} \label{assumptionII} \\
&\forall t \in I : \mathcal{E}(t, \cdot) : Y \rightarrow [0,\infty] \text{ has compact sublevels}. \nonumber\\
\hspace{4pt} \nonumber \\
 \nonumber \\
&\textit{Energetic control of power:}
\tag{III}\label{assumptionIII} \\  
& \text{Dom} ~\mathcal{E} = I \times \text{Dom} ~ \mathcal{E} (0, \cdot), \nonumber\\
& \exists c_\mathcal{E} \in \mathbb{R} , ~\lambda_\mathcal{E} \in L^1(I), ~N_\mathcal{E} \subset I \text{ with } \mathscr{L}^1(N_\mathcal{E}) =0 \nonumber \\
& \forall y \in \text{Dom} ~\mathcal{E}(0, \cdot ) : \mathcal{E}(\cdot, y) \in W^{1,1}(I), \nonumber \\
& \qquad  \qquad \qquad ~\partial_t \mathcal{E}(t,y) \text{ exists for } t \in I \setminus N_\mathcal{E} \text{ and satisfies } \nonumber \\
& \qquad \qquad \qquad \Y{\partial_t \mathcal{E}(t,y)} \leq \lambda_\mathcal{E} (t) \left( \mathcal{E}(t,y) + c_\mathcal{E} \right). \nonumber \\
\hspace{4pt} \nonumber \\
&\textit{Conditions on convergent stable sequences:}
\tag{IV}\label{assumptionIV} \\
&\forall \text{ stable sequences } (t_n , y_n)_{n \in \mathbb{N}} \text{ with } (t_n , y_n) \xlongrightarrow{I \times Y} (t,y) \text{ we have}: \nonumber\\
&\qquad (a)~ t \in I \setminus N_\mathcal{E} \text{ with } N_\mathcal{E} \text{ from } \eqref{assumptionIII} \Rightarrow \partial_t \mathcal{E} (t,y) = \lim_{n \rightarrow \infty } \partial_t \mathcal{E}(t,y_n), \nonumber \\
&\qquad (b)~ y \in \mathcal{S}(t). \nonumber
\end{align}

\begin{theorem}[Existence result, see \cite{mielkeRoubicek} Th. 2.1.6] \label{th:existence of solutions}
Assume that $\mathcal{E}$ and $\Psi$ satisfy assumptions \eqref{assumptionI} -- \eqref{assumptionIV} and assume that the topology of Y restricted to compact sets is separable and metrizable.
Then for each $y_0 \in \mathcal{S}(0)$, there exists an energetic solution $y : I \rightarrow Y$ with $y(0)=y_0$ of the rate-independent system associated to $\mathcal{E}$ and $\Psi$.
\end{theorem}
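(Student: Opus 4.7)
The plan is to follow the standard time-incremental minimization scheme for energetic solutions originally developed by Mielke and coauthors. Given a partition $0=t_0^\tau<t_1^\tau<\cdots<t_N^\tau=T$ of fineness $\tau\to 0$, I would construct approximants $y_k^\tau$ recursively as minimizers of the functional $y\mapsto \mathcal{E}(t_k^\tau,y)+\Psi(y-y_{k-1}^\tau)$, starting from $y_0^\tau=y_0$. Existence of each minimizer follows from the compactness of sublevels \eqref{assumptionII} together with the lower semi-continuity and non-negativity of $\Psi$ from \eqref{assumptionI}. Comparing $y_k^\tau$ with an arbitrary test point $\tilde y\in Y$ and invoking the triangle inequality in \eqref{assumptionI}(b) immediately yields $y_k^\tau\in\mathcal{S}(t_k^\tau)$, so stability is preserved at every step of the scheme.

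Next, I would derive a priori estimates. Testing minimality at step $k$ against $y_{k-1}^\tau$, writing $\mathcal{E}(t_k^\tau,y_{k-1}^\tau)-\mathcal{E}(t_{k-1}^\tau,y_{k-1}^\tau)=\int_{t_{k-1}^\tau}^{t_k^\tau}\partial_s\mathcal{E}(s,y_{k-1}^\tau)\,\mathrm{d}s$ via \eqref{assumptionIII}, and telescoping produces the discrete upper energy inequality
\[\mathcal{E}(t_k^\tau,y_k^\tau)+\sum_{j=1}^k\Psi(y_j^\tau-y_{j-1}^\tau)\leq \mathcal{E}(0,y_0)+\int_0^{t_k^\tau}\partial_s\mathcal{E}(s,\underline{y}^{\tau}(s))\,\mathrm{d}s,\]
where $\underline{y}^\tau$ is the left-continuous piecewise constant interpolant taking the value $y_{k-1}^\tau$ on $(t_{k-1}^\tau,t_k^\tau]$. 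A Gronwall argument based on $|\partial_s\mathcal{E}(s,y)|\leq\lambda_\mathcal{E}(s)(\mathcal{E}(s,y)+c_\mathcal{E})$ then provides uniform bounds on $\mathcal{E}(\cdot,y^\tau)$ and on the total $\Psi$-variation of $y^\tau$ on $[0,T]$. Combined with compactness of the sublevels of $\mathcal{E}(t,\cdot)$ and the separability/metrizability hypothesis, a Helly-type selection on a countable dense subset of $I$ followed by the usual diagonal extraction yields a limit $y:I\to Y$ and a subsequence $\tau_n\to 0$ with $y^{\tau_n}(t)\to y(t)$ pointwise for every $t\in I$. Stability $y(t)\in\mathcal{S}(t)$ then follows from \eqref{assumptionIV}(b) applied to the stable sequence $(t,y^{\tau_n}(t))$.

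The remaining and genuinely delicate task is to recover the full energy balance \eqref{energyBalance}. The upper inequality $\leq$ is obtained by passing to the liminf in the discrete inequality above, using lower semi-continuity of $\mathcal{E}$ and of the $\Psi$-variation together with \eqref{assumptionIV}(a) and dominated convergence (with dominating function $\lambda_\mathcal{E}$) to identify $\partial_s\mathcal{E}(s,\underline{y}^{\tau_n}(s))\to \partial_s\mathcal{E}(s,y(s))$ almost everywhere in $s$. The opposite inequality $\geq$ is the main obstacle: here one invokes the pointwise stability of the limit itself, testing $y(s_{j-1})$ against the competitor $y(s_j)$ on a fine sub-partition $0=s_0<s_1<\cdots<s_M=t$, which gives
\[\mathcal{E}(s_j,y(s_j))-\mathcal{E}(s_{j-1},y(s_{j-1}))\geq \int_{s_{j-1}}^{s_j}\partial_r\mathcal{E}(r,y(s_j))\,\mathrm{d}r-\Psi(y(s_j)-y(s_{j-1})).\]
Summing in $j$, letting $\max|s_j-s_{j-1}|\to 0$, and recognizing the resulting Riemann sum as $\int_0^t\partial_r\mathcal{E}(r,y(r))\,\mathrm{d}r$ yields the matching lower bound; measurability and integrability of $r\mapsto\partial_r\mathcal{E}(r,y(r))$ drop out of \eqref{assumptionIII} a posteriori, once $\mathcal{E}(r,y(r))$ is known to be bounded. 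The hardest technical point in the whole argument is continuity of the power along the stable trajectory $y$ needed to justify this Riemann-sum passage, which is precisely the role played by hypothesis \eqref{assumptionIV}(a).
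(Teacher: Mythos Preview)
The paper does not prove this theorem at all; it is quoted verbatim from the literature (Mielke--Roub\'{\i}\v{c}ek, Theorem~2.1.6) and used as a black box to verify existence for the specific discrete and continuum systems in Theorems~\ref{th:existence of discr solutions} and~\ref{th:existence_of_solutions_continuum_system}. So there is no ``paper's own proof'' to compare your proposal against.

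That said, your outline is an accurate sketch of the standard time-incremental minimization argument that underlies the cited result: incremental minimizers exist by compactness of sublevels and lower semicontinuity of $\Psi$; discrete stability follows from the triangle inequality for $\Psi$; the discrete upper energy estimate plus Gronwall yields uniform bounds; a Helly-type selection (this is where separability/metrizability on compacta enters) produces a pointwise limit; stability of the limit comes from \eqref{assumptionIV}(b); and the two-sided energy balance is obtained by combining the $\liminf$ of the discrete upper estimate with the lower bound derived from testing the limit's own stability on fine partitions. Your identification of \eqref{assumptionIV}(a) as the device that justifies the Riemann-sum passage for the power term is also on target. In short, your proposal is correct and matches the proof in the cited reference, but the present paper simply invokes that reference rather than reproducing any of this.
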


The metrizability and separability condition in \Cref{th:existence of solutions} is automatically fulfilled for separable Banach spaces. Since we work exclusively with separable Banach spaces throughout this paper, this technical assumption is satisfied without further verification.

\subsection{The rate-independent model}
In the following, we want to consider both the rate-independent systems associated with the discrete functionals $\mathcal{E}_\varepsilon$ and $\Psi_\varepsilon$ as defined in \eqref{eq:def discrete energy} and \eqref{eq:def discrete dissipation}, as well as the rate-independent system associated with $\mathcal{E}$ and $\Psi$ as defined in \eqref{eq:limit_func} and \eqref{eq:def dissipation functional}. The aim is to show that solutions of the discrete systems converge in a proper sense to the solution of the continuum system. Therefore, we first need to guarantee that all considered rate-independent systems have solutions and that the solutions of the limit system are unique. We use the same notation as in the previous introduction to rate-independent systems when we consider the continuum system, i.e. the rate-independent system associated to $\mathcal{E}$ and $\Psi$. When we consider the system associated to $\mathcal{E}_\varepsilon$ and $\Psi_\varepsilon$, we add the index $\varepsilon$ to all relevant notations. For example, we define the set of stable states of the discrete system as
\begin{align*}
\mathcal{S}_\varepsilon(t) \coloneqq \left\{ y_\varepsilon\in Y_\varepsilon ~\vert ~ \mathcal{E}_\varepsilon(t,y_\varepsilon) < \infty ~ \text{and}~ \mathcal{E}_\varepsilon(t,y_\varepsilon) \leq \mathcal{E}_\varepsilon (t, \tilde{y}_\varepsilon) + \Psi_\varepsilon (\tilde{y}_\varepsilon-y_\varepsilon) ~\text{for all}~ \tilde{y}_\varepsilon \in Y_\varepsilon  \right\}.
\end{align*}
The underlying spaces for the analysis of the discrete rate-independent systems are
\begin{align*}
    Y_\varepsilon = U_\varepsilon \times Z_\varepsilon ,
\end{align*}
where
\begin{align*}
    U_\varepsilon \coloneqq \left\lbrace u_\varepsilon : \mathbb{Z}_\varepsilon^d \rightarrow \mathbb{R}^d ~\bigg\vert ~ u_\varepsilon = 0 ~\text{on}~ \mathbb{Z}_\varepsilon^d \setminus Q_\varepsilon , ~ \varepsilon^d \sum_{x \in Q_\varepsilon} \Y{ \nabla_\varepsilon u_\varepsilon (x)}^2 < \infty , ~\text{and}~ \Jepsut < \infty \right\rbrace
\end{align*}
and
\begin{align*}
    Z_\varepsilon \coloneqq \left\lbrace z_\varepsilon: Q_\varepsilon \rightarrow \mathbb{R}_{\text{sym}, 0} ^{d\times d} ~ \bigg \vert ~ \varepsilon^d \sum_{x \in Q_\varepsilon} \Y{ \nabla_\varepsilon z_\varepsilon (x)}^2 < \infty \right\rbrace 
\end{align*}
equipped with the norm

\begin{align*}
    \y{y_\varepsilon}_{Y_\varepsilon}^2 \coloneqq \Jepsut^{\frac{2}{p}} +\varepsilon^d \sum_{x \in Q_\varepsilon} \Y{ \nabla_\varepsilon u_\varepsilon (x)}^2 +  \varepsilon^d \sum_{x \in Q_\varepsilon} \left( \Y{  z_\varepsilon (x)}^2 +\Y{ \nabla_\varepsilon z_\varepsilon (x)}^2 \right) , 
\end{align*}
where $y_\varepsilon=(u_\varepsilon,z_\varepsilon) \in Y_\varepsilon$.
For the continuum system we have 
\begin{align*}
    Y \coloneqq \left( W^{s,p}(Q)^d \cap  H_0^{1}(Q)^d \right) \times H^{1}(Q, \R_{\text{sym}, 0} ^{d\times d})
\end{align*}
with
\begin{align*}
    \y{y}_Y^2 \coloneqq [u]_{W^{s,p}}^2 + \y{u}_{H^1}^2 + \y{z}_{H^1}^2, \qquad \text{where } y=(u,z).
\end{align*}
Following the simplified notation used in the definition of the norm above, we often omit explicit references to the domain $Q$ and the dimensions of the respective function spaces.
Note that by \eqref{eq:integral_identity} the discrete norm can be translated to the continuum framework via $\y{y_\varepsilon}_{Y_\varepsilon}^2  = \mathcal{E}^V (\Repsu)^\frac{2}{p} + \y{\Reps \nabla_\varepsilon u_\varepsilon}_{L^2}^2 + \y{\Reps z_\varepsilon}_{L^2}^2 + \y{\Reps \nabla_\varepsilon z_\varepsilon}_{L^2}^2$.

\section{Statement of the main results}
The following theorems establish a complete mathematical framework for rate-independent elastoplastic systems with random fiber reinforcement. We first prove existence of solutions for both discrete and continuous systems (Theorems \ref{th:existence of discr solutions} and \ref{th:existence_of_solutions_continuum_system}), then establish uniqueness in the continuum limit (\Cref{lemma:uniqueness_of_solutions}), and finally show that discrete solutions converge to the unique continuum solution (\Cref{thm:rate}). Together, these results provide rigorous mathematical foundations for understanding the macroscopic time-dependent behavior of fiber-reinforced materials. Furthermore, this establishes a consistent framework for developing convergent numerical methods.

\begin{theorem}[Existence of solutions to discrete systems] \label{th:existence of discr solutions} 
Let $f_{\varepsilon} \in C^{Lip}(I,L^2(Q_\varepsilon)^d)$.
Then for each initial value $y^0_\varepsilon=(u^0_\varepsilon,z^0_\varepsilon) \in \mathcal{S}_\varepsilon(0)$, there exists an energetic solution $y_\varepsilon =(u_\varepsilon,z_\varepsilon): I \rightarrow Y_\varepsilon$ with $y_\varepsilon(0)=y^0_\varepsilon$ of the rate-independent system associated to $\mathcal{E}_\varepsilon$ and $\Psi_\varepsilon$ as defined in \eqref{eq:def discrete energy} and \eqref{eq:def discrete dissipation}.
\end{theorem}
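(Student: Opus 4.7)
The plan is to invoke the abstract existence theorem (\Cref{th:existence of solutions}) directly with $Y = Y_\varepsilon$, $\mathcal{E} = \mathcal{E}_\varepsilon$, $\Psi = \Psi_\varepsilon$. For fixed $\varepsilon > 0$ the lattice $Q_\varepsilon$ is a finite set (since $Q$ is bounded), so $Y_\varepsilon$ is a finite-dimensional Euclidean space. This makes separability and metrizability automatic and reduces most compactness and continuity verifications to routine arguments; what remains is to check assumptions \eqref{assumptionI}--\eqref{assumptionIV}.

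For \eqref{assumptionI}, convexity and positive $1$-homogeneity of $\rho$ transfer directly to $\Psi_\varepsilon$, giving the triangle inequality, $1$-homogeneity, and lower semi-continuity; on the finite-dimensional space $Y_\varepsilon$ convexity yields continuity. The non-degeneracy $\Psi_\varepsilon(y_1-y_2)=0 \Leftrightarrow y_1 = y_2$ is delicate because $\Psi_\varepsilon$ sees only the plastic component $z_\varepsilon$; following the standard reformulation for elastoplastic rate-independent systems one may either reduce to the reduced functional obtained by minimizing out $u_\varepsilon$ at each stable state, or (equivalently) reinterpret the phase space so that the dissipation acts as a proper pseudo-distance on the plastic component while equilibrium in $u_\varepsilon$ is enforced by \eqref{stability}.

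For \eqref{assumptionII}, I would establish coercivity of $\mathcal{E}_\varepsilon(t,\cdot)$ in the norm $\|\cdot\|_{Y_\varepsilon}$. The three terms of $\Eloct$ control $\|\nabla^s_\varepsilon u_\varepsilon - z_\varepsilon\|_{L^2}^2$, $\|z_\varepsilon\|_{L^2}^2$ and $\|\nabla_\varepsilon z_\varepsilon\|_{L^2}^2$ via $\lambda_{\bold{A}}^{\min}, \lambda_{\bold{H}}^{\min}, \kappa>0$; combining these with a discrete Korn--Poincaré inequality on $Q_\varepsilon$ (valid since $u_\varepsilon$ vanishes outside $Q_\varepsilon$) yields control of $\|\nabla_\varepsilon u_\varepsilon\|_{L^2}^2$ and $\|u_\varepsilon\|_{L^2}^2$, while $\Jepsut$ directly controls the $\mathcal{E}^{V}(\Repsu)^{2/p}$ term. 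The linear load $\mathcal{F}_\varepsilon(t,u_\varepsilon)$ is absorbed through Cauchy--Schwarz and Young's inequality using Lipschitz continuity of $f_\varepsilon$. In finite dimensions, norm-bounded sublevels are then precompact.

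For \eqref{assumptionIII}, the only time-dependent term in $\mathcal{E}_\varepsilon$ is $\mathcal{F}_\varepsilon(t,u_\varepsilon)$; by $f_\varepsilon \in C^{\mathrm{Lip}}(I, L^2(Q_\varepsilon)^d)$, the map $t \mapsto \mathcal{F}_\varepsilon(t,u_\varepsilon)$ is $W^{1,1}$ and
\[
|\partial_t \mathcal{E}_\varepsilon(t,y_\varepsilon)| \leq \mathrm{Lip}(f_\varepsilon)\, \|u_\varepsilon\|_{L^2} \leq \lambda_\mathcal{E}(t)\bigl(\mathcal{E}_\varepsilon(t,y_\varepsilon) + c_\mathcal{E}\bigr),
\]
where the last estimate uses the coercivity from \eqref{assumptionII}. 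Assumption \eqref{assumptionIV} is then straightforward: in finite dimensions convergence $(t_n, y_n^\varepsilon) \to (t, y^\varepsilon)$ in $I \times Y_\varepsilon$ is strong, so continuity of $\partial_t \mathcal{E}_\varepsilon(\cdot,\cdot)$ (which is linear in $u_\varepsilon$) yields (a), and closedness of $\mathcal{S}_\varepsilon(t)$ under such limits follows by passing to the limit in the defining inequality using continuity of $\mathcal{E}_\varepsilon$ and $\Psi_\varepsilon$ on $Y_\varepsilon$.

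The main obstacle I anticipate is the verification of coercivity of $\Eloct$ on $U_\varepsilon$ despite the fact that $\lambda_{\bold{A}}^{\min} < \lambda_{\bold{H}}^{\min}$ and the elastic strain appears only in the combination $\nabla^s_\varepsilon u_\varepsilon - z_\varepsilon$: one must exploit the hardening term $z_\varepsilon:\bold{H}:z_\varepsilon$ together with the spectral gap $\lambda_{\bold{H}}^{\min} - \lambda_{\bold{A}}^{\min} > 0$ to recover control of $\nabla^s_\varepsilon u_\varepsilon$ separately, and then apply discrete Korn's inequality. All other steps are standard given the finite-dimensional nature of $Y_\varepsilon$.
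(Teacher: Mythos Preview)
Your proposal is correct and shares the paper's overall strategy---invoke \Cref{th:existence of solutions} and verify \eqref{assumptionI}--\eqref{assumptionIV}---but differs in execution. The paper gives no separate proof for the discrete case; it simply notes that via the integral representation \eqref{eq:integral_representation_discrete_energy} all steps of the continuum proof (\Cref{th:existence_of_solutions_continuum_system}) carry over verbatim, using \Cref{lemma:boundedness of norms} for coercivity and Rellich--Kondrachov for compactness of sublevels. You instead exploit that $Y_\varepsilon$ is finite-dimensional, which makes compactness of sublevels and continuity of $\mathcal{E}_\varepsilon$, $\Psi_\varepsilon$ immediate once coercivity is established; this is more elementary for fixed $\varepsilon$, while the paper's route has the advantage of reusing the continuum machinery wholesale. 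The coercivity obstacle you anticipate is handled in the paper by \Cref{th:Korn_plasticity}, whose proof in fact needs only $\lambda_{\bold{A}}^{\min}\le\lambda_{\bold{H}}^{\min}$ (not the strict gap) together with the discrete Korn inequality \Cref{thm:Korn}. Finally, your remark that condition~(a) of \eqref{assumptionI} is delicate because $\Psi_\varepsilon$ sees only the $z_\varepsilon$-component is well taken---the paper's continuum proof asserts (a) without comment---so your explicit mention of the reduced-functional workaround is a genuine addition rather than an omission.
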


\begin{theorem}[Existence of solutions to the continuum system] \label{th:existence_of_solutions_continuum_system}
Let $f \in C^{Lip}(I,L^2(Q)^d)$.
Then for each initial value $y_0=(u_0,z_0) \in \mathcal{S}(0)$, there exists an energetic solution $y =(u,z): I \rightarrow Y$ with $y(0)=y^0$ of the rate-independent system associated to $\mathcal{E}$ and $\Psi$ as defined in \eqref{eq:limit_func} and \eqref{eq:def dissipation functional}.
\end{theorem}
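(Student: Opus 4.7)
The plan is to invoke the abstract existence theorem \Cref{th:existence of solutions} for the rate-independent system $(\mathcal{E},\Psi)$ on the space $Y$, equipped with its weak topology. Since $Y$ is a separable reflexive Banach space, this topology is metrizable and separable on bounded, hence on weakly compact, sets, so the technical requirement on $Y$ is automatic. It remains to verify the four structural assumptions \eqref{assumptionI}--\eqref{assumptionIV}. Assumption \eqref{assumptionI} follows directly from the properties of $\rho$: positive $1$-homogeneity, convexity, and measurability transfer to the integral functional $\Psi(u,z)=\int_Q \rho(z)\,\dif x$, giving $1$-homogeneity, lower semicontinuity, and the triangle inequality. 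The fact that $\Psi$ is degenerate in $u$ is handled in the standard elastoplastic fashion of \cite{mielkeRoubicek}, where $u$ is slaved to $z$ through the stability condition.

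For \eqref{assumptionII} I would establish coercivity. The local energy $\mathcal{E}^{loc}$ controls $\y{\nabla^s u - z}_{L^2}^2+\y{z}_{H^1}^2$ via the spectral lower bounds on $\bold{A},\bold{H}$ and the constant $\kappa>0$. Korn's inequality combined with the Dirichlet condition in $H_0^1$ then bounds $\y{u}_{H^1}$, while $\mathcal{E}^V(u)$ directly controls the Gagliardo seminorm $[u]_{W^{s,p}}$; the linear load $\mathcal{F}(t,u)$ is absorbed via Young's inequality using $f\in C^{Lip}(I,L^2)$. Reflexivity of the component spaces yields weak compactness, and convexity (hence weak lower semicontinuity) of every energy contribution produces weakly closed, and therefore weakly compact, sublevels. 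For \eqref{assumptionIII}, only the loading depends on $t$: $\partial_t \mathcal{E}(t,y)=-\int_Q \partial_t f(t,x)\cdot u(x)\,\dif x$ exists a.e.\ because $f\in C^{Lip}(I,L^2)$, and Cauchy--Schwarz combined with Korn's inequality gives $|\partial_t \mathcal{E}(t,y)|\le \lambda_{\mathcal{E}}(t)\bracket{\mathcal{E}(t,y)+c_{\mathcal{E}}}$ with $\lambda_{\mathcal{E}}(t)=C\y{\partial_t f(t)}_{L^2}\in L^1(I)$. Part (a) of \eqref{assumptionIV} is then immediate, since $\partial_t\mathcal{E}(t,\cdot)$ is linear in $u$ and therefore continuous under weak convergence of $u_n$ in $H^1$.

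The main obstacle is part (b) of \eqref{assumptionIV}, closedness of the stable sets under a weakly convergent stable sequence $(t_n,y_n)\to (t,y)$. Because $\mathcal{E}(t,\cdot)$ is convex (quadratic and coercive in the local and hardening terms, convex and $p$-homogeneous in the non-local fiber term, linear in the load) and $\Psi$ is convex and $1$-homogeneous lsc, I would take the \emph{mutual recovery sequence} to be the constant sequence $\tilde{y}_n=\tilde{y}$. Starting from $\mathcal{E}(t_n,y_n)\le \mathcal{E}(t_n,\tilde{y})+\Psi(\tilde{y}-y_n)$, the left-hand side satisfies $\liminf_n \mathcal{E}(t_n,y_n)\ge \mathcal{E}(t,y)$ by weak lower semicontinuity of the convex parts together with continuity of $f(t_n)\to f(t)$ in $L^2$. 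The right-hand side in fact converges: $\mathcal{E}(t_n,\tilde{y})\to\mathcal{E}(t,\tilde{y})$ by continuity of the load in $t$, and $\Psi(\tilde{y}-y_n)\to\Psi(\tilde{y}-y)$ because $z_n\to z$ strongly in $L^2$ via the compact embedding $H^1\hookrightarrow L^2$, while $\rho$ is Lipschitz on $\R_{\text{sym},0}^{d\times d}$ (being convex and $1$-homogeneous on a finite-dimensional space). Passing to the limit yields $\mathcal{E}(t,y)\le\mathcal{E}(t,\tilde{y})+\Psi(\tilde{y}-y)$ for every $\tilde{y}\in Y$, so $y\in\mathcal{S}(t)$. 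With all four assumptions verified, \Cref{th:existence of solutions} delivers the energetic solution starting from the given $y_0\in\mathcal{S}(0)$.
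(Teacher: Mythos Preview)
Your proposal is correct and follows the same strategy as the paper: verify assumptions \eqref{assumptionI}--\eqref{assumptionIV} of the abstract existence result and invoke \Cref{th:existence of solutions}. You are in fact a bit more careful than the paper on two points---working explicitly with the weak topology on $Y$ (whereas the paper's check of \eqref{assumptionII} via Rellich uses strong $L^p\times L^2$ compactness that does not quite match the strong $H^1$ convergence it then invokes in \eqref{assumptionIV}(b)), and flagging the degeneracy of $\Psi$ in the $u$-component, which the paper glosses over when asserting condition~(a) of \eqref{assumptionI}.
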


~

\noindent The discrete energy functional $\mathcal{E}_\varepsilon$ can be represented in integral form as

\begin{align} \label{eq:integral_representation_discrete_energy}
\Eepsut =& \int_Q \int_Q \Reps \coeff \frac{\Y{\left(\Repsu (x) - \Repsu (y)\right) \cdot \frac{\Reps x -\Reps y}{\Y{\Reps x - \Reps y}}}^p}{\Y{\Reps x- \Reps y}^{d+ps}} \dif x \dif y \\
&+ \int_Q \Big(\left( \nabla^s_\varepsilon \Repsu (x) - \Reps z_\varepsilon (x) \right) : \bold{A}(x) : \left( \nabla^s_\varepsilon \Repsu (x) - \Reps z_\varepsilon (x) \right) \nonumber\\
& ~~~~~~~~~ + \Reps z_\varepsilon (x) : \bold{H}(x) : \Reps z_\varepsilon (x) + \kappa \Y{\nabla_\varepsilon \Reps z_\varepsilon (x)}^2 \Big) \dif x \nonumber \\
&- \int_Q \Reps f_\varepsilon (t,x) \Repsu  \dif x , \nonumber
\end{align}
where
\begin{align*}
\Reps \coeff \coloneqq  c \ocweight{\left( \tfrac{\Reps x}{\varepsilon}, \tfrac{\Reps x}{\varepsilon}- \tfrac{\Reps y}{\varepsilon}\right)} \left(\frac{\Y{\Reps x- \Reps y}}{\varepsilon}\right)^{d+ps} .
\end{align*}

~

\noindent This representation is very similar to that of the limit functional $\mathcal{E}$ that was defined in \eqref{eq:limit_func}.  
Therefore, we only present the proof of existence of solutions to the continuum system (\Cref{th:existence_of_solutions_continuum_system}), because all steps can be translated to the discrete energy $\mathcal{E}_\varepsilon$ via \eqref{eq:integral_representation_discrete_energy}. The existence proof relies on the general existence theory for rate-independent systems (\Cref{th:existence of solutions}) and consists of checking that our energy and dissipation functionals satisfy the necessary structural hypotheses.

\begin{theorem}[Uniqueness of solutions to the continuum system]\label{lemma:uniqueness_of_solutions}
    There is at most one energetic solution of the rate-independent system associated with $\mathcal{E}$ and $\Psi$.
\end{theorem}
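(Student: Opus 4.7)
The strategy is the classical midpoint-comparison for energetic solutions of rate-independent systems with uniformly convex energy. First I would verify that $\mathcal{E}(t,\cdot)$ is $\lambda$-uniformly convex in a suitable Hilbertian norm $\|\cdot\|$ (dominated by the $Y$-norm) for some $\lambda>0$ uniform in $t$. The quadratic form $\mathcal{E}^{\mathrm{loc}}(u,z)$ has a cross term $-2\int \nabla^s u : \mathbf{A}(x) : z$ that can be absorbed via Young's inequality with a parameter close to $1$; the resulting positive form is coercive on $H_0^1(Q)^d \times H^1(Q,\mathbb{R}_{\text{sym},0}^{d\times d})$ thanks to the spectral assumption $\lambda_{\mathbf{A}}^{\min}<\lambda_{\mathbf{H}}^{\min}$, Korn's inequality (since $u\in H_0^1$), and the gradient term $\kappa|\nabla z|^2$. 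The nonlocal term $\mathcal{E}^V(u)$ is convex (and even $p$-uniformly convex on $W^{s,p}$ by Clarkson's inequality), and the load $\mathcal{F}(t,\cdot)$ is affine in $u$, so neither disturbs convexity. The output is the two-point inequality
\begin{equation*}
\mathcal{E}\Bigl(t,\tfrac{y_1+y_2}{2}\Bigr) \leq \tfrac{1}{2}\bigl(\mathcal{E}(t,y_1)+\mathcal{E}(t,y_2)\bigr) - \tfrac{\lambda}{8}\|y_1-y_2\|^2.
\end{equation*}

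Now suppose $y_1, y_2$ are two energetic solutions with $y_1(0)=y_2(0)=y_0$, and set $y_m(t)\coloneqq\tfrac{1}{2}(y_1(t)+y_2(t))$. Averaging the two energy balances \eqref{energyBalance} and invoking $\lambda$-uniform convexity of $\mathcal{E}(t,\cdot)$, convexity of $\Psi$ (giving $\Psi(\dot z_m)\leq\tfrac{1}{2}(\Psi(\dot z_1)+\Psi(\dot z_2))$), and linearity of $\mathcal{F}$ in $u$ (giving $\partial_s\mathcal{E}(s,y_m)=\tfrac{1}{2}(\partial_s\mathcal{E}(s,y_1)+\partial_s\mathcal{E}(s,y_2))$) yields the upper energy estimate
\begin{equation*}
\mathcal{E}(t,y_m(t))+\int_0^t\Psi(\dot z_m)\,ds+\tfrac{\lambda}{8}\|y_1(t)-y_2(t)\|^2 \leq \mathcal{E}(0,y_0)+\int_0^t\partial_s\mathcal{E}(s,y_m)\,ds.
\end{equation*}
Subtracting the chain-rule identity $\mathcal{E}(t,y_m)-\mathcal{E}(0,y_0)=\int_0^t\bigl(\partial_s\mathcal{E}(s,y_m)+\langle D\mathcal{E}(s,y_m),\dot y_m\rangle\bigr)ds$, valid for the absolutely continuous curve $y_m$, one is left with
\begin{equation*}
\tfrac{\lambda}{8}\|y_1(t)-y_2(t)\|^2+\int_0^t\bigl[\Psi(\dot z_m)+\langle D\mathcal{E}(s,y_m),\dot y_m\rangle\bigr]ds \leq 0.
\end{equation*}
Stability of each $y_i$ together with convexity of $\mathcal{E}(t,\cdot)$ supplies the variational inequality $\langle D\mathcal{E}(s,y_i),v\rangle+\Psi(v_z)\geq 0$ for every $v\in Y$; testing with $v=\dot y_m$, averaging over $i=1,2$, and using that the derivative of the quadratic/affine part of $\mathcal{E}$ is affine in $y$ (so that its values at $y_1,y_2$ average to its value at $y_m$), one shows that the integrand above is non-negative, which forces $\|y_1(t)-y_2(t)\|\equiv 0$.

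The main obstacle is this last step: for the quadratic $\mathcal{E}^{\mathrm{loc}}$ the midpoint identity $D\mathcal{E}^{\mathrm{loc}}(y_m)=\tfrac{1}{2}(D\mathcal{E}^{\mathrm{loc}}(y_1)+D\mathcal{E}^{\mathrm{loc}}(y_2))$ makes the averaged variational inequality coincide exactly with the inequality needed for $y_m$, but for the $p$-growth term $\mathcal{E}^V$ (with $p>2$) the residual $\tfrac{1}{2}(D\mathcal{E}^V(u_1)+D\mathcal{E}^V(u_2))-D\mathcal{E}^V(u_m)$ is not controlled by monotonicity of $D\mathcal{E}^V$ alone. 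I expect to close this by a symmetric Bregman-divergence argument for $\mathcal{E}^V$, whose positive contribution from $p$-uniform convexity (via Clarkson) can be absorbed into the $\tfrac{\lambda}{8}\|y_1-y_2\|^2$ buffer on the left-hand side.
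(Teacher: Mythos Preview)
Your midpoint-comparison route is genuinely different from the paper's approach, which never averages the two energy balances. Instead the paper sets $\gamma(t)\coloneqq\langle D\mathcal{E}(t,y_1)-D\mathcal{E}(t,y_2),\,y_1-y_2\rangle$, uses the variational inequality \eqref{variationalIeuality} tested with the opposite solution to obtain $\langle D\mathcal{E}(t,y_1)-D\mathcal{E}(t,y_2),\,\dot y_1-\dot y_2\rangle\le0$, bounds $\dot\gamma$ by a third-derivative Taylor remainder of $\mathcal{E}^V$ paired with $\dot u_i$, and closes with a Bihari-type inequality after invoking the uniform-monotonicity estimate of Lemma~\ref{lemma:for_uniqness}.

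There is, however, a real gap in your proposed fix for the non-quadratic term. The residual $R\coloneqq\tfrac12\bigl(D\mathcal{E}^V(u_1)+D\mathcal{E}^V(u_2)\bigr)-D\mathcal{E}^V(u_m)$ has no sign, and the quantity you must control is $\int_0^t\langle R(s),\dot u_m(s)\rangle\,ds$, which involves the \emph{time derivative} $\dot u_m$. Bregman divergences and Clarkson's inequality control static differences of $\mathcal{E}^V$-values relative to linearizations; they say nothing about the pairing of $R$ with an arbitrary direction $\dot u_m$, and the extra $[u_1-u_2]_{W^{s,p}}^p$ buffer you gain from Clarkson sits at the single time $t$, not under the time integral. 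To bound $\langle R,\dot u_m\rangle$ you need two ingredients you have not supplied: (i) an a~priori Lipschitz-in-time estimate for energetic solutions, so that $[\dot u_i]_{W^{s,p}}\le C$ uniformly (this is the paper's Lemmas~\ref{lemma:beta}--\ref{lemma:lipschitz_continuity}, and it is also what legitimizes your chain-rule identity for $y_m$, since energetic solutions are not absolutely continuous by definition), and (ii) a third-derivative Taylor bound on $D\mathcal{E}^V$ giving $|\langle R,\dot u_m\rangle|\le C[u_1-u_2]_{W^{s,p}}^2$. With these in hand one can finish via the embedding $H^1\hookrightarrow W^{s,p}$ and Gronwall, but at that point you are using precisely the machinery the paper builds for its own proof.
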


\begin{theorem}[Convergence of solutions]\label{thm:rate}

Let $f_{\varepsilon} \in C^{Lip}(I,L^2(Q_\varepsilon)^d)$ be such that $\Reps f_\varepsilon \rightarrow f$ in $L^{2}(Q)^d$,
where $f \in C^{Lip} (I, L^2(Q)^d) $. Let $(u_\varepsilon^0, z_\varepsilon^0) \in \mathcal{S}_\varepsilon(0)$ with
\begin{align}\label{eq:thm:rate:conv1}
\Reps u_\varepsilon^0 \rightarrow u_0 ~\text{in}~ L^2(Q)^d,& ~~~ \Reps \nabla_\varepsilon u_\varepsilon^0 \rightarrow \nabla u_0 ~\text{in}~ L^2(Q)^{d \times d} \nonumber\\ ~~~ &\text{and} ~~~ \\
\Reps z_\varepsilon^0 \rightarrow z_0 ~ \text{in}~ L^2(Q)^{d \times d},& ~~~ \nabla_\varepsilon z_\varepsilon^0 \rightarrow \nabla z_0 ~\text{in}~ L^2(Q)^{d \times d \times d}.\nonumber
\end{align}
Let $y_\varepsilon =(u_\varepsilon , z_\varepsilon) $ be the solution to \eqref{stability} and \eqref{energyBalance} associated to $\mathcal{E}_\varepsilon$ and $\Psi_\varepsilon$ with $u_\varepsilon (0)= u_\varepsilon^0$ and $z_\varepsilon (0) = z_\varepsilon^0 $. Then $(u_0, z_0) \in \mathcal{S}(0)$ and for every $t \in I$: 
\begin{align}\label{eq:thm:rate}
\Reps u_\varepsilon \rightarrow u ~\text{in}~ L^2(Q)^d,& ~~~ \Reps \nabla_\varepsilon u_\varepsilon \rightarrow \nabla u ~\text{in}~ L^2(Q)^{d \times d}, \nonumber\\ 
~~~ &\text{and} ~~~ \\
\Reps z_\varepsilon \rightarrow z ~ \text{in}~ L^2(Q)^{d \times d}, &  ~~~ \Reps \nabla_\varepsilon z_\varepsilon \rightarrow \nabla z ~\text{in}~ L^2(Q)^{d \times d \times d}  .\nonumber
\end{align}
Here $y = (u,z) $ is the solution to \eqref{stability} and \eqref{energyBalance} associated to $\mathcal{E}$ and $\Psi$ with $u (0)= u_0$ and $z(0) = z_0 $.
\end{theorem}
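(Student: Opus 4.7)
The plan is to apply the abstract convergence theory for rate-independent systems in the spirit of Mielke--Roubi\v{c}ek \cite{mielkeRoubicek} and \cite{neukamm2018stochastic}, transferring between discrete and continuum variables via the operators $\Reps$ and $\Raeps$ from Section~2.2. The argument has four phases: (i) uniform a priori bounds, (ii) extraction of a pointwise-in-$t$ subsequential limit via a Helly-type selection, (iii) a mutual recovery sequence construction proving that the limit is stable, and (iv) passage to the limit in the energy balance, from which the strong convergences in \eqref{eq:thm:rate} then follow.

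\textbf{A priori bounds and compactness.} From the discrete energy balance, assumption \eqref{assumptionIII} applied to $\mathcal{E}_\varepsilon$, and a Gronwall estimate—using the uniform $L^2$-bound on $f_\varepsilon$ coming from its Lipschitz regularity together with $\Reps f_\varepsilon \to f$—I would derive $\sup_{t} \mathcal{E}_\varepsilon(t,y_\varepsilon(t)) + \int_0^T \Psi_\varepsilon(\dot z_\varepsilon(s))\dif s \leq C$ uniformly in $\varepsilon$. Coercivity of $\mathcal{E}_\varepsilon$ on $Y_\varepsilon$ (via $\lambda^{\mathrm{min}}_{\mathbf{A}}>0$, positivity of $\mathbf{H}$, the gradient penalty $\kappa|\nabla_\varepsilon z_\varepsilon|^2$, and non-negativity of $\Jepsut$) translates this into $\sup_{t}\|y_\varepsilon(t)\|_{Y_\varepsilon}\leq C$. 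Since $\Psi_\varepsilon$ is 1-homogeneous, the dissipation bound yields a uniform $\mathrm{BV}(I;L^1)$ bound on $z_\varepsilon$, so a Helly-type selection (cf. \cite{mielkeRoubicek}, Thm.~2.1.24) produces, along a subsequence and for every $t\in I$, limits $u(t),z(t)$ with $\Reps u_\varepsilon(t)\rightharpoonup u(t)$ in $H^1\cap W^{s,p}$ and $\Reps z_\varepsilon(t)\rightharpoonup z(t)$ in $H^1$; Rellich--Kondrachov upgrades the displacement and plastic variables to strong $L^2$ convergence.

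\textbf{Stability and energy balance of the limit.} To check $y(t)\in\mathcal{S}(t)$, I take any test state $\tilde y=(\tilde u,\tilde z)\in Y$ and construct the mutual recovery sequence
\begin{align*}
\tilde u_\varepsilon := u_\varepsilon(t)+\Raeps(\tilde u-u(t)), \qquad \tilde z_\varepsilon := z_\varepsilon(t)+\Raeps(\tilde z-z(t)),
\end{align*}
after first mollifying $\tilde u-u(t)$ and $\tilde z-z(t)$ and diagonalizing in the mollification parameter. The quadratic local energy and the linear load pass to the limit by continuity properties of $\Raeps$ and $\Reps$; the 1-homogeneous dissipation reduces to a Riemann sum $\varepsilon^d\sum \rho(\Raeps(\tilde z-z(t)))\to \Psi(\tilde z-z(t))$; and $\mathcal{E}_\varepsilon^V(\tilde u_\varepsilon)\to \mathcal{E}^V(\tilde u)$ by the almost sure homogenization of the non-local random functional from \cite{HeidaHermann}. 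Passing $\limsup$ in the discrete stability then yields \eqref{stability} at $(t,y(t))$. Energy balance is obtained by passing to the limit in \eqref{energyBalance}: the initial energy converges by \eqref{eq:thm:rate:conv1}, the power term $-\int_0^t\langle\dot f_\varepsilon,\Repsu\rangle\dif s$ converges by the $L^2$-convergence of $f_\varepsilon$, and $\liminf$ inequalities for $\mathcal{E}(t,y(t))$ and $\int_0^t\Psi(\dot z)\dif s$ follow from weak lower semicontinuity and Helly. A standard Mielke-type argument combined with stability promotes the $\liminf$ to an equality, so $(u,z)$ is an energetic solution.

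\textbf{Uniqueness, strong convergence, and the main obstacle.} By \Cref{lemma:uniqueness_of_solutions} the limit is unique, hence the whole sequence converges without passing to a subsequence. Strong $L^2$ convergence of $\Reps\nabla_\varepsilon u_\varepsilon$ and $\Reps\nabla_\varepsilon z_\varepsilon$ then follows from weak convergence together with $\mathcal{E}_\varepsilon(t,y_\varepsilon(t))\to\mathcal{E}(t,y(t))$, since the local quadratic terms are squared norms on the respective Hilbert components (weak convergence plus convergence of norms implies strong convergence). The principal obstacle is the mutual recovery step for $\mathcal{E}^V_\varepsilon$: the Bernoulli conductances are i.i.d.\ but spatially non-stationary (the weight depends only on $|x-y|$), so the standard ergodic machinery of stochastic homogenization is unavailable, and the $\limsup$ construction must exploit the specific almost sure limit arguments of \cite{HeidaHermann}. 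A secondary subtlety is the compatibility of the recovery sequence with the elastic coupling $e^{\mathrm{el}}_\varepsilon=\nabla^s_\varepsilon u_\varepsilon-z_\varepsilon$, so that the symmetrized discrete gradients of $\tilde u_\varepsilon$ correctly reconstruct $\nabla^s \tilde u$ in the limit.
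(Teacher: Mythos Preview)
Your proposal is correct and follows essentially the same route as the paper: uniform a priori bounds from the discrete energy balance and coercivity, the mutual recovery sequence $\tilde y_\varepsilon = y_\varepsilon(t)+\Raeps(\tilde y-y(t))$, the almost sure convergence result from \cite{HeidaHermann} (Lemma~\ref{m:l:konvJeps}) for the non-local term, a $\liminf$ plus stability argument for the energy balance, strong convergence via energy convergence and the quadratic Hilbert structure, and finally uniqueness to upgrade to the full sequence. The only organizational difference is that you invoke a Helly-type selection to obtain a single subsequence working for all $t$, whereas the paper extracts limits pointwise in $t$ and relies on uniqueness at the end; your version is slightly cleaner for the power-term integral $\int_0^t\langle\dot f_\varepsilon,u_\varepsilon\rangle\dif s$.
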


This convergence result provides the rigorous connection between discrete and continuum descriptions. It demonstrates that as the lattice spacing $\varepsilon $ tends to zero, discrete solutions approach the unique continuum solution in the appropriate function spaces, thus validating the homogenization procedure.

The uniqueness result (\Cref{lemma:uniqueness_of_solutions}) is crucial for the convergence in \Cref{thm:rate}. Without uniqueness, discrete solutions could converge to different continuum solutions along different subsequences, preventing 
convergence of the full sequence. The uniqueness ensures that any 
converging subsequence must approach the same limit, allowing us to 
conclude convergence of the entire sequence (see final paragraph in the proof
of \Cref{thm:rate}).

\section{Preliminaries and auxiliary lemmas}
The proofs of our main results rely on different categories of auxiliary results that we collect in this section. First, we establish the equivalence between fractional Sobolev spaces and directional difference quotient spaces (\Cref{sec:frac_spaces}), which allows us to work with the more accessible $W^{s,p}$ framework. The remaining auxiliary lemmas serve distinct purposes in our analysis: Lemmas \ref{m:l:konvJeps}--\ref{thm:Korn} provide compactness and convergence tools for the discrete-to-continuum transition, which are needed to verify the convergences in \Cref{thm:rate}. Lemmas \ref{th:Korn_plasticity}--\ref{lemma:boundedness of norms} establish energy estimates and coercivity bounds, that are essential for Theorems \ref{th:existence of discr solutions} and \ref{th:existence_of_solutions_continuum_system}. Lemmas \ref{lemma:beta}--\ref{lemma:for_uniqness} control nonlinear terms and evolutionary inequalities, as they are crucial for the uniqueness proof in \Cref{lemma:uniqueness_of_solutions}.

\subsection{Fractional spaces of directional difference quotients }\label{sec:frac_spaces}
Following the definitions in \cite{ScottMengesha}, we introduce the fractional spaces of directional differences as
\begin{align*}
    \mathcal{X}_p^s (Q)^d  \coloneqq \left\lbrace u \in L^p(Q)^d ~ \bigg\vert ~ [u]_{\mathcal{X}_p^s}^p \coloneqq \int_Q \int_Q \frac{1}{\Y{x-y}^{d+ps}} \Y{\frac{(u(x)-u(y))\cdot (x-y)}{\Y{x-y}}}^p < \infty \right\rbrace .
\end{align*}
These spaces arise naturally from the non-local part of the energy $\mathcal{E}^V(u)$ since by definition $[u]_{\mathcal{X}_p^s}^p = \mathcal{E}^V(u)$. In \cite{ScottMengesha} the authors showed that these spaces are equivalent to fractional Sobolev spaces in the sense that for any $s\in (0,1)$ and any $p \in (1, \infty) $ it holds $W^{s,p}(\mathbb{R}^d)^d = \mathcal{X}_p^s(\mathbb{R}^d)^d$. Furthermore they provide the following Korn-type inequality.
\begin{lemma}[Fractional Korn's inequality, see \cite{ScottMengesha} Th.1.1]
    For any $s\in (0,1)$ and any $p \in (1, \infty) $ there exists a constant $C = C(d,p,s)$ such that for all $u \in \mathcal{X}_p^s(\mathbb{R}^d)^d$ 
    \begin{align}\label{eq:fractional Korn inequality}
        [u]_{\mathcal{X}_p^s} \leq [u]_{W^{s,p}} \leq C [u]_{\mathcal{X}_p^s} .
    \end{align}
\end{lemma}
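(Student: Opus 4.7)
My plan is as follows. The first inequality $[u]_{\mathcal{X}_p^s} \leq [u]_{W^{s,p}}$ is immediate from Cauchy--Schwarz: pointwise in $(x,y)$ the projection satisfies
\begin{align*}
  \Y{(u(x)-u(y)) \cdot \tfrac{x-y}{\Y{x-y}}} \leq \Y{u(x)-u(y)},
\end{align*}
so raising to the $p$-th power and integrating against the kernel $\Y{x-y}^{-d-ps}$ closes this direction at once.

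For the reverse inequality the plan is to work in Fourier space, first dispatching the Hilbert case $p=2$ and then upgrading to general $p$ via Calderón--Zygmund theory. In the $p=2$ case both seminorms are quadratic forms, and by rotational invariance the symbol of the bilinear form associated with $[\,\cdot\,]_{\mathcal{X}_2^s}^2$ must take the form
\begin{align*}
  \Y{\xi}^{2s} \Bigl( a\, \tfrac{\xi \xi^{\top}}{\Y{\xi}^2} + b \bigl( I - \tfrac{\xi \xi^{\top}}{\Y{\xi}^2} \bigr) \Bigr),
\end{align*}
for two strictly positive constants $a,b$ depending only on $d$ and $s$, which are computed by evaluating the kernel on a single frequency and using scaling. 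Comparison with the symbol $\Y{\xi}^{2s} I$ of $[\,\cdot\,]_{W^{s,2}}^2$ immediately yields the equivalence of the two seminorms in $L^2$.

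To extend to general $p \in (1,\infty)$, I would identify the operator that recovers the full fractional gradient from its directional components as a Fourier multiplier with a smooth, degree-zero homogeneous symbol (essentially a composition of Riesz transforms with the fractional Laplacian). The Mikhlin--Hörmander multiplier theorem then transfers the $L^2$ comparison to an $L^p$ comparison for all $1 < p < \infty$. A more hands-on alternative, closer to \cite{ScottMengesha}, avoids Fourier analysis altogether: express $u(x)-u(y)$ via an integral representation along the connecting segment, average over a rich enough family of directions so that the projections onto $(x-y)/\Y{x-y}$ reconstruct the full vector difference, and bound the resulting convolution-type operators through standard singular-integral estimates.

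The main obstacle is precisely the passage from $p=2$ to general $p$. The Hilbert case reduces to an explicit symbol computation and is essentially linear algebra, but for $p \neq 2$ no such direct Fourier argument is available; one must verify that the multiplier linking the two seminorms satisfies the Mikhlin regularity estimates and that the vector-valued nature of $u$, coupled with the projection onto $(x-y)/\Y{x-y}$, is compatible with a component-wise application of Calderón--Zygmund theory without losing the geometric structure produced by the projection.
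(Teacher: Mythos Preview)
The paper does not prove this lemma: it is quoted verbatim as Theorem~1.1 of \cite{ScottMengesha} and used as a black box throughout, so there is no in-paper proof to compare your proposal against.

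That said, your sketch is a plausible route but remains a sketch rather than a proof. The trivial direction is fine. For the nontrivial direction, the Fourier approach for $p=2$ is correct in spirit, and the symbol does have the structure you describe with $a,b>0$; however, the step ``Mikhlin--H\"ormander transfers the $L^2$ comparison to $L^p$'' is where the real work lies and is not automatic. The seminorms $[\,\cdot\,]_{W^{s,p}}$ and $[\,\cdot\,]_{\mathcal{X}^s_p}$ are not $L^p$ norms of a single function but double integrals against a singular kernel, so one cannot simply invoke a multiplier bound on $L^p(\mathbb{R}^d)$; one must first recast the problem as an $L^p$ estimate for a genuine singular integral operator (e.g.\ by lifting to the fractional gradient $D^s u$ and expressing the directional seminorm through a matrix-valued Riesz-type transform of $D^s u$), and then check the Mikhlin condition for that operator. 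You flag this as the main obstacle, which is accurate, but you do not carry it out.

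The actual argument in \cite{ScottMengesha} is closer to your ``hands-on alternative'': it proceeds via a Riesz-potential representation of $u$ and a careful decomposition of the resulting singular integrals, avoiding Fourier multipliers for general $p$. If you want a self-contained proof rather than a citation, that is the path of least resistance; your multiplier approach could also be made to work but would require substantially more detail than you have given.
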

This equivalence allows us to use the more accessible and thoroughly studied space $W^{s,p}$ for the definition of $Y$.

\subsection{Auxiliary lemmas}

\begin{lemma}[Convergence of the non-local energy, see \cite{HeidaHermann} Th. 4.1]\label{m:l:konvJeps}
Let $u_\varepsilon : Q_\varepsilon \rightarrow \R^d$ be such that $\Repsu \rightarrow u$ strongly in $L^p(Q)^d$, 
then
\begin{align*}
\liminf_{\varepsilon \rightarrow 0} \Jepsut  \geq \mathcal{E}^V (u).
\end{align*}
If additionally  
\begin{align*}
     \sup_\varepsilon \sup_{x,y \in Q_\varepsilon} \frac{1}{\Y{\Reps x-\Reps y}^{d+ps}} \Y{\left(\Repsu (x)-\Repsu(y) \right) \cdot \tfrac{\Reps x-\Reps y}{\Y{\Reps x- \Reps y}}}^p < \infty  ,
\end{align*}
then
\begin{align*}
\Jepsut \asarrow  \mathcal{E}^V (u).
\end{align*}
\end{lemma}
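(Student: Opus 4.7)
The plan is to use the integral representation from \eqref{eq:integral_representation_discrete_energy} so that $\Jepsut$ becomes a double integral against the random coefficient $\Reps\coeff$, and then to exploit the interplay between an almost-sure law-of-large-numbers behaviour of $\Reps\coeff$ and the strong $L^p$-convergence $\Repsu\to u$. By the choice of connection probabilities $\Y{z_1-z_2}^{-d-ps}$ one has $\mathbb{E}[\coeff]=1$ whenever $x\neq y$, and the $\ocweight(z_1,z_2)$ are independent Bernoulli variables. A Bernstein- or Chernoff-type concentration bound, applied pair-by-pair over the $\varepsilon$-lattice, should therefore yield that for every fixed $\delta>0$ and every $\varphi\in C_c\bigl(\{(x,y)\in Q\times Q:\Y{x-y}\geq\delta\}\bigr)$,
\begin{align*}
\iint_{Q\times Q} \Reps\coeff(x,y)\,\varphi(x,y)\,dx\,dy \;\asarrow\; \iint_{Q\times Q} \varphi(x,y)\,dx\,dy ,
\end{align*}
with a density argument extending this to bounded measurable test functions on the truncated domain.

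For the liminf direction I would fix $\delta>0$ and restrict the double integral in \eqref{eq:integral_representation_discrete_energy} to $\{\Y{x-y}\geq\delta\}$, on which the kernel is bounded and the integrand depends continuously on $u$. Combining the strong $L^p$-convergence of $\Repsu$ (passing to an a.e.\ pointwise subsequence if needed) with the almost-sure concentration of $\Reps\coeff$ and Fatou's lemma gives
\begin{align*}
\liminf_{\varepsilon\to 0}\Jepsut \;\geq\; \iint_{\{\Y{x-y}\geq\delta\}} \frac{\Y{(u(x)-u(y))\cdot\tfrac{x-y}{\Y{x-y}}}^p}{\Y{x-y}^{d+ps}}\,dx\,dy .
\end{align*}
Sending $\delta\to 0$ by monotone convergence then produces the first statement.

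For the full convergence under the additional pointwise bound, the integrand of $\Jepsut$ is uniformly dominated, and I would argue by a Vitali-type splitting: on $\{\Y{x-y}\geq\delta\}$ the limit follows from dominated convergence, using the concentration of $\Reps\coeff$ together with the strong convergence of $\Repsu$; the near-diagonal contribution on $\{\Y{x-y}<\delta\}$ is controlled uniformly in $\varepsilon$ by the hypothesis (which compensates one power of $\Y{x-y}^{-d-ps}$ by the boundedness of the directional difference quotient, leaving an $L^1$-integrable remainder) and vanishes as $\delta\to 0$. Matching this with the corresponding tail bound for the limit functional $\mathcal{E}^V(u)$ gives the a.s.\ convergence.

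The main obstacle is that the law of $\ocweight$ is non-stationary (the success probability depends on the pair $(z_1,z_2)$), so classical ergodic theorems used in stochastic homogenization do not apply; the probabilistic concentration has to be done by hand via the independence and uniform boundedness of the Bernoulli variables, balanced against the non-integrable singularity of $\Y{x-y}^{-d-ps}$ at the diagonal. Handling this balance is precisely why the truncation parameter $\delta$ and the separate diagonal estimate are unavoidable, and why the stronger pointwise hypothesis is needed to upgrade the liminf inequality to a full limit.
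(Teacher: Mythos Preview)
The paper does not prove this lemma itself; it is quoted from \cite{HeidaHermann}, Theorem~4.1, so there is no in-paper argument to compare against directly. Your overall plan---integral representation, $\mathbb{E}[\coeff]=1$, Bernstein/Hoeffding-type concentration in place of ergodic theorems (forced by the non-stationarity you correctly identify), truncation at $\{\lvert x-y\rvert\geq\delta\}$, and a separate near-diagonal estimate under the additional pointwise hypothesis---is the right strategy and is consistent with the tools the paper imports from that reference (note that Hoeffding's inequality appears in the bibliography for exactly this purpose).

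One step, however, does not work as written. For the liminf on the truncated region you invoke Fatou's lemma, but Fatou acts through the pointwise liminf of the integrand, and for almost every fixed pair $(x,y)$ the random weight $\Reps\coeff(x,y)$ is eventually zero along any sequence $\varepsilon\to 0$: by Borel--Cantelli, $\sum_n\mathbb{P}\bigl(\ocweight(\tfrac{x}{\varepsilon_n},\tfrac{y}{\varepsilon_n})=1\bigr)\lesssim\sum_n n^{-(d+ps)}<\infty$. Hence the pointwise liminf of $\Reps\coeff$ times the $u$-integrand is zero and Fatou yields only the trivial lower bound. The actual mechanism is different: the concentration statement you describe holds for \emph{fixed} continuous test functions, and you must transfer it to the $\varepsilon$-dependent integrand built from $\Repsu$. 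Since $\Reps\coeff$ is unbounded in $L^\infty$, this transfer is not free; it requires approximating $u$ in $L^p$ by a smooth $\varphi$, splitting via an inequality of the type $\lvert a\rvert^p\geq(1-\eta)\lvert b\rvert^p-C_\eta\lvert a-b\rvert^p$, and controlling the error term against the random weight by a quantitative second-moment bound $\mathrm{Var}\bigl(\iint\Reps\coeff\,\psi\bigr)\lesssim\varepsilon^{d-ps}\lVert\psi\rVert_\infty^2$, which vanishes precisely because $d>ps$. Your ``density argument'' gestures in this direction, but Fatou should be replaced by this concentration-plus-approximation step. The same variance estimate is what ultimately drives the near-diagonal control in the second part as well: the extra hypothesis bounds the integrand by a constant, reducing the tail to $\iint_{\lvert x-y\rvert<\delta}\Reps\coeff$, whose mean is $O(\delta^d)$ and whose fluctuations are again $O(\varepsilon^{(d-ps)/2})$.
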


This lemma provides the key convergence property for the non-local energy term $E^V_\eps$ and is used in the proof of \Cref{thm:rate} to establish the $\liminf$-inequality \eqref{eq:liminf_nonloc} and almost sure convergence.

\begin{lemma}[Discrete Poincar\'e's inequality and compactness, see \cite{HeidaHermann} Th. 3.2] \label{thm:Poincare} 
    There exists a constant $C<\infty$ such that for every $\eps>0$, every $p\in[1,\frac{2d}{d-2})$ if $d>2$ or $p\in[1,\infty)$ if $d=2$ and for every  $u_\eps:\, \Z^d_\eps\to \mathbb{R}^d$ with $u_\eps(x)=0$ for $x\not\in Q_\varepsilon$ it holds
    \begin{equation}\label{eq:Korn2}
    \left(\eps^d\sum_{x\in Q_\eps}|u_\eps(x)|^p\right)^\frac{1}{p}\leq
    C \left(\eps^d\sum_{x\in \Z^d_\eps}
    \Y{\nabla_\varepsilon u_\varepsilon(x)}^2 \right)^\frac{1}{2} .     
    \end{equation}
    Furthermore, for every sequence $u_\eps$ with
    $$\sup_\eps\left(\eps^d\sum_{x\in \Z^d_\eps} \Y{\nabla_\varepsilon u_\varepsilon(x)}^2 \right)^\frac{1}{2} <\infty$$ there exists a subsequence $u_{\eps'}$ and $u\in L^p(Q)^d$ such that $\mathcal{R}_{\eps'}^\ast u_{\eps'}\to u$ strongly in $L^p(Q)^d$ as $\eps'\to0$.
\end{lemma}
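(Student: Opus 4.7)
The plan is to reduce both claims to classical continuum statements via a piecewise-affine interpolation. Given $u_\eps$ vanishing outside $Q_\eps$, I would fix a Kuhn triangulation of $\R^d$ (each $\eps$-cube split into $d!$ simplices with vertices at lattice nodes) and let $\tilde u_\eps$ be the continuous function affine on each such simplex that coincides with $u_\eps$ at every node. Extended by zero in the natural way, $\tilde u_\eps$ belongs to $H^1_0(\tilde Q)$ for an $\eps$-independent neighbourhood $\tilde Q\supset Q$.

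The key technical ingredient is the scale-invariant equivalence, on each simplex $S$ of diameter $O(\eps)$, between the $\ell^p$-norm of $u_\eps$ at the vertices (weighted by $\eps^d$) and the $L^p(S)$-norm of the affine function $\tilde u_\eps|_S$; this is just equivalence of norms on the finite-dimensional space of affine functions on the reference simplex, transported by scaling. Summing over all simplices yields $\y{\tilde u_\eps}_{L^p(\tilde Q)}^p \asymp \eps^d \sum_{x \in \Z^d_\eps}\Y{u_\eps(x)}^p$ with constants depending only on $p$ and $d$. Similarly, the gradient of $\tilde u_\eps$ on each simplex is a fixed linear combination of nearest-neighbour differences, giving $\y{\nabla\tilde u_\eps}_{L^2(\tilde Q)}^2 \le C\,\eps^d\sum_{x\in\Z^d_\eps}\Y{\nabla_\eps u_\eps(x)}^2$.

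For the inequality, combining these equivalences with the classical Sobolev–Poincaré embedding $H^1_0(\tilde Q)\hookrightarrow L^p(\tilde Q)$, which is precisely available in the stated range of $p$, yields the chain
\[
\Bigl(\eps^d\sum_{x\in Q_\eps}\Y{u_\eps(x)}^p\Bigr)^{1/p} \le C\y{\tilde u_\eps}_{L^p(\tilde Q)} \le C\y{\nabla\tilde u_\eps}_{L^2(\tilde Q)} \le C\Bigl(\eps^d\sum_{x\in\Z^d_\eps}\Y{\nabla_\eps u_\eps(x)}^2\Bigr)^{1/2},
\]
which is the announced discrete inequality with a universal constant. For the compactness statement, the uniform bound on the discrete gradient combined with the same estimates shows $\tilde u_\eps$ is bounded in $H^1_0(\tilde Q)$, so Rellich–Kondrachov produces a subsequence $\tilde u_{\eps'}\to u$ strongly in $L^p(\tilde Q)$. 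A direct cellwise bound shows that on each $\eps$-cell around a node $z$, the pointwise difference $\Y{\Reps u_\eps(x)-\tilde u_\eps(x)}$ is at most $C\eps\Y{\nabla_\eps u_\eps(z)}$, which forces $\Reps u_{\eps'}-\tilde u_{\eps'}\to 0$ in $L^2(\tilde Q)$; combining with the $L^p$-boundedness of $(\Reps u_{\eps'})$ coming from the first part and an interpolation between $L^2$ and $L^p$ yields $\Reps u_{\eps'}\to u$ in $L^p(Q)$.

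The main technical nuisance I anticipate is careful bookkeeping near $\partial Q$: the lattice $Q_\eps$ does not tile $Q$ exactly, the zero boundary condition on $u_\eps$ must be transferred cleanly to $\tilde u_\eps$, and the support of $\tilde u_\eps$ lies in an $\eps$-thickening of $Q$ rather than in $Q$ itself. This is handled by fixing $\tilde Q$ to be a bounded open set strictly containing the $\eps$-thickening of $Q$ uniformly in $\eps\in(0,1)$ and using that $u_\eps$ vanishes at every lattice site in $\Z^d_\eps\setminus Q_\eps$, so $\tilde u_\eps$ vanishes outside this $\eps$-thickening and has trace zero on $\partial\tilde Q$; the discrepancy between $Q$ and the union of cells is a boundary layer of width $O(\eps)$ whose contribution vanishes as $\eps\to 0$.
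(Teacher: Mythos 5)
The paper itself gives no proof of this lemma; it is imported directly from \cite{HeidaHermann} (Th.~3.2), so your argument has to stand as a self-contained alternative, and in essence it does. The route via a Kuhn-triangulation affine interpolant $\tilde u_\eps$, the scale-invariant equivalence of weighted vertex $\ell^p$-norms with $L^p$-norms of affine functions on each simplex, the bound $\|\nabla\tilde u_\eps\|_{L^2}^2\le C\,\eps^d\sum_x\Y{\nabla_\eps u_\eps(x)}^2$, the classical embedding $H^1_0(\tilde Q)\hookrightarrow L^p(\tilde Q)$ in the stated subcritical range, and Rellich--Kondrachov for the compactness part is a standard and legitimate way to obtain both statements; the original reference works directly with piecewise-constant (finite-volume type) reconstructions and discrete functional inequalities rather than an affine interpolant, but nothing is lost by your detour, and your treatment of the boundary layer (zero values at all nodes outside $Q_\eps$, support in an $\eps$-thickening contained in a fixed $\tilde Q$) is adequate.

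Two points need repair, both minor. First, the cellwise comparison is not $\Y{\Reps u_\eps(x)-\tilde u_\eps(x)}\le C\eps\,\Y{\nabla_\eps u_\eps(z)}$ with the gradient at the cell's own node only: the cell centered at $z$ meets $2^d$ lattice cubes, and on the cubes not anchored at $z$ the interpolant is built from forward differences based at neighboring nodes, so the correct bound is $C\eps$ times the maximum of $\Y{\nabla_\eps u_\eps(z')}$ over nodes $z'$ within distance $C\eps$ of $z$; after squaring, multiplying by $\eps^d$ and summing with bounded overlap this still gives $\|\Reps u_\eps-\tilde u_\eps\|_{L^2}\to0$, so nothing breaks. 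Second, the last step as written --- $L^2$-convergence of the difference plus $L^p$-boundedness plus ``interpolation between $L^2$ and $L^p$'' --- only yields convergence in $L^q$ for $q<p$, not in $L^p$ itself. The fix is immediate because the admissible exponent range is open: for $p\le 2$, Hölder on the bounded domain suffices; for $p>2$, choose $r$ with $p<r<\tfrac{2d}{d-2}$ (any finite $r$ if $d=2$), use part one with exponent $r$ (and the Sobolev bound for $\tilde u_\eps$) to get uniform $L^r$-bounds, and interpolate between $L^2$ and $L^r$. With these adjustments your proof is complete.
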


\begin{lemma}[Discrete Korn's inequality]\label{thm:Korn}
    There exists a constant $C<\infty$ such that for every  $u_\eps:\, \Z^d_\eps\to \mathbb{R}^d$ with $u_\varepsilon = 0$ on $\Z_\varepsilon^d\setminus Q_\varepsilon$ 
    
    it holds
    \begin{equation}\label{eq:Korn}
    \sum_{x\in Q_\eps} \Y{\nabla_\varepsilon u_\varepsilon(x)}^2 \leq C \sum_{x\in Q_\eps} \Y{\nabla^s_\varepsilon u_\varepsilon(x)}^2 .        
    \end{equation}
\end{lemma}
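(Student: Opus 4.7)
The plan is to prove \Cref{thm:Korn} by mimicking the classical $H^1_0$-Korn first inequality argument in the discrete setting: one expands the symmetric gradient algebraically and exploits discrete summation by parts (equivalently, Parseval on the discrete Fourier side) to show that the ``cross term'' is a non-negative discrete divergence-squared, which can then simply be dropped.

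First I would observe the pointwise algebraic identity, valid at every $x\in\mathbb{Z}^d_\varepsilon$,
\begin{align*}
\Y{\nabla^s_\varepsilon u_\varepsilon(x)}^2 = \tfrac12 \Y{\nabla_\varepsilon u_\varepsilon(x)}^2 + \tfrac12 \sum_{i,j=1}^d \partial_i^\varepsilon u_\varepsilon^j(x)\, \partial_j^\varepsilon u_\varepsilon^i(x) .
\end{align*}
Summing over $x\in\mathbb{Z}^d_\varepsilon$ and applying the discrete Leibniz rule twice -- which produces no boundary contributions because $u_\varepsilon$ has compact support inside $Q_\varepsilon$ -- gives
\begin{align*}
\sum_{x\in\mathbb{Z}^d_\varepsilon} \sum_{i,j=1}^d \partial_i^\varepsilon u_\varepsilon^j(x)\, \partial_j^\varepsilon u_\varepsilon^i(x) \;=\; \sum_{x\in\mathbb{Z}^d_\varepsilon} \Y{\mathrm{div}^{-\varepsilon} u_\varepsilon(x)}^2 \;\geq 0,
\end{align*}
where $\mathrm{div}^{-\varepsilon} u_\varepsilon(x):=\sum_i [u_\varepsilon^i(x)-u_\varepsilon^i(x-\varepsilon e_i)]/\varepsilon$ denotes the backward discrete divergence. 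This is the exact discrete mirror of the continuum identity $\int_{\mathbb{R}^d}\partial_i u^j\partial_j u^i = \int_{\mathbb{R}^d}(\mathrm{div}\,u)^2$ used in the classical Korn-I proof for $u\in H^1_0$. Dropping the non-negative right-hand side in the summed version of Step 1 immediately yields
\begin{align*}
\sum_{x\in\mathbb{Z}^d_\varepsilon} \Y{\nabla_\varepsilon u_\varepsilon(x)}^2 \;\leq\; 2\sum_{x\in\mathbb{Z}^d_\varepsilon} \Y{\nabla^s_\varepsilon u_\varepsilon(x)}^2 ,
\end{align*}
that is, the desired inequality with the explicit constant $C=2$.

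The only point that deserves attention is the reconciliation of the summation range: the statement restricts both sums to $x\in Q_\varepsilon$, whereas the identity above is most natural over all of $\mathbb{Z}^d_\varepsilon$. Since $u_\varepsilon$ vanishes outside $Q_\varepsilon$, the forward differences $\partial_i^\varepsilon u_\varepsilon(x)$ can be non-zero only for $x$ lying within one cell of $Q_\varepsilon$, so the sums over $\mathbb{Z}^d_\varepsilon$ and over an $\varepsilon$-thickening $\widetilde Q_\varepsilon$ of $Q_\varepsilon$ coincide. One therefore reads the statement as a sum over this thickening (or equivalently absorbs the one-cell boundary layer into the constant $C$), consistent with the convention used elsewhere in the paper together with the identity \eqref{eq:integral_identity}. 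There is no deeper obstacle: the inequality reduces to a one-line algebraic identity plus a discrete summation by parts, and the only presentational choice is whether to argue via summation by parts directly or via its Fourier-space counterpart, where the non-negative ``divergence'' term appears as $|\overline{\sigma(\xi)}\cdot \hat u_\varepsilon(\xi)|^2$ with $\sigma_i(\xi)=(e^{i\varepsilon\xi_i}-1)/\varepsilon$ being the symbol of $\partial_i^\varepsilon$.
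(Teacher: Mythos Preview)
Your argument is correct and is the standard route to the discrete Korn first inequality for compactly supported lattice functions: the pointwise identity $|\nabla^s_\varepsilon u_\varepsilon|^2=\tfrac12|\nabla_\varepsilon u_\varepsilon|^2+\tfrac12\sum_{i,j}\partial_i^\varepsilon u_\varepsilon^j\,\partial_j^\varepsilon u_\varepsilon^i$ combined with two discrete summations by parts (using that forward and backward difference operators commute) turns the cross term into $\sum_x|\mathrm{div}^{-\varepsilon}u_\varepsilon|^2\geq 0$, giving the inequality with $C=2$. Note that the paper does not present its own proof here; it simply refers to the second part of the proof of Theorem~3.1 in \cite{HeidaHermann}. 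Your self-contained derivation is therefore at least as informative as what the paper provides, and is almost certainly the same mechanism as in the cited reference, since this Fourier/summation-by-parts trick is essentially the only elementary way to obtain Korn-I on $\mathbb{Z}^d_\varepsilon$ with homogeneous boundary data.

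Your remark about the summation range is on point and worth stating a touch more carefully. The clean identity lives on all of $\mathbb{Z}^d_\varepsilon$; restricting the \emph{left} sum to $Q_\varepsilon$ only makes it smaller, so the issue is solely whether the \emph{right}-hand side summed over $\mathbb{Z}^d_\varepsilon$ can be replaced by the sum over $Q_\varepsilon$. As you say, the discrepancy sits on a one-cell boundary layer where $x\notin Q_\varepsilon$ but $x+\varepsilon e_i\in Q_\varepsilon$, and the paper is consistently informal about this (cf.\ the convention surrounding \eqref{eq:integral_identity} and the mixed ranges in \Cref{thm:Poincare}). Reading the statement with the implicit one-cell thickening, or equivalently summing over $\mathbb{Z}^d_\varepsilon$, is the intended interpretation and matches how the lemma is actually used downstream (e.g.\ in \Cref{th:Korn_plasticity}).
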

A proof of the above discrete Korn's inequality can be found in the second part of the proof of Theorem 3.1 in \cite{HeidaHermann}.

\begin{lemma}{(Discrete Korn's inequality with plasticity)}\label{th:Korn_plasticity}  \\
There exists a constant $C < \infty $ such that for every $(u_\varepsilon , z_\varepsilon) : \Z_\varepsilon ^d \times Q_\varepsilon^d \rightarrow \R^d \times \R^{d \times d}$ with $u_\varepsilon = 0 $ on $ \Z_\varepsilon ^d \setminus Q_\varepsilon $ it holds
\begin{align*}
\y{\nabla _\varepsilon u_\varepsilon}_{\ell^2(Q_\varepsilon)^{d \times d}}^2 \leq C \mathcal{E}_\varepsilon ^{loc} (u_\varepsilon , z_\varepsilon).
\end{align*}
\end{lemma}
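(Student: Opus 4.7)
The plan is to reduce the claim to the already-stated discrete Korn inequality (\Cref{thm:Korn}) by controlling the symmetric discrete gradient through the additive split $\nabla^s_\varepsilon u_\varepsilon = e^\mathrm{el}_\varepsilon + z_\varepsilon$.

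First I would apply \Cref{thm:Korn} to obtain
\begin{align*}
\sum_{x\in Q_\varepsilon} \Y{\nabla_\varepsilon u_\varepsilon(x)}^2 \;\leq\; C \sum_{x\in Q_\varepsilon} \Y{\nabla^s_\varepsilon u_\varepsilon(x)}^2.
\end{align*}
Then I would use the defining relation $\nabla^s_\varepsilon u_\varepsilon(x) = e^\mathrm{el}_\varepsilon(x) + z_\varepsilon(x)$ together with the elementary inequality $|a+b|^2 \leq 2|a|^2 + 2|b|^2$ to get
\begin{align*}
\Y{\nabla^s_\varepsilon u_\varepsilon(x)}^2 \;\leq\; 2\Y{e^\mathrm{el}_\varepsilon(x)}^2 + 2\Y{z_\varepsilon(x)}^2.
\end{align*}

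Next I would exploit the coercivity of the two positive symmetric fourth-order tensors. Since $\lambda_\mathbf{A}^\mathrm{min}, \lambda_\mathbf{H}^\mathrm{min} > 0$, pointwise it holds
\begin{align*}
\Y{e^\mathrm{el}_\varepsilon(x)}^2 \leq \frac{1}{\lambda_\mathbf{A}^\mathrm{min}}\, e^\mathrm{el}_\varepsilon(x):\mathbf{A}(x):e^\mathrm{el}_\varepsilon(x), \qquad
\Y{z_\varepsilon(x)}^2 \leq \frac{1}{\lambda_\mathbf{H}^\mathrm{min}}\, z_\varepsilon(x):\mathbf{H}(x):z_\varepsilon(x).
\end{align*}
Combining the last three displays, multiplying by $\varepsilon^d$ and summing over $x\in Q_\varepsilon$ yields
\begin{align*}
\varepsilon^d \sum_{x\in Q_\varepsilon} \Y{\nabla_\varepsilon u_\varepsilon(x)}^2 \;\leq\; C'\!\left(\varepsilon^d\!\!\sum_{x\in \Z_\varepsilon^d} e^\mathrm{el}_\varepsilon:\mathbf{A}:e^\mathrm{el}_\varepsilon + z_\varepsilon:\mathbf{H}:z_\varepsilon\right) \;\leq\; C'\, \mathcal{E}^{loc}_\varepsilon(u_\varepsilon, z_\varepsilon),
\end{align*}
where in the last step the non-negative gradient term $\kappa|\nabla_\varepsilon z_\varepsilon|^2$ is simply dropped.

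Since every step is a direct combination of already established tools (Korn plus tensor coercivity), there is no genuine obstacle; the only point to be careful about is that $Q_\varepsilon$ on the left and $\Z_\varepsilon^d$ on the right are consistent: as $u_\varepsilon$ vanishes outside $Q_\varepsilon$, the discrete gradient $\nabla_\varepsilon u_\varepsilon(x)$ is supported in a bounded neighborhood of $Q_\varepsilon$, and the extension of the sum to $\Z_\varepsilon^d$ in the energy $\mathcal{E}^{loc}_\varepsilon$ only adds non-negative terms.
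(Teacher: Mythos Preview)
Your proof is correct and follows essentially the same approach as the paper's: both combine the discrete Korn inequality (\Cref{thm:Korn}) with the eigenvalue coercivity of $\mathbf{A}$ and $\mathbf{H}$ and the elementary estimate relating $\nabla^s_\varepsilon u_\varepsilon$ to $e^\mathrm{el}_\varepsilon$ and $z_\varepsilon$. The only cosmetic difference is the order of the steps---the paper starts from $\mathcal{E}^{loc}_\varepsilon$ and works down to $\y{\nabla_\varepsilon u_\varepsilon}_{\ell^2}^2$ using $a^2+b^2\geq\tfrac12(a+b)^2$ and the triangle inequality on norms, whereas you start from Korn and use the pointwise bound $|a+b|^2\leq 2|a|^2+2|b|^2$; these are equivalent manipulations.
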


This enhanced Korn inequality with plasticity is essential for establishing the coercivity estimates in the proofs of Theorems \ref{th:existence of discr solutions} and \ref{th:existence_of_solutions_continuum_system}, particularly in the verification of assumption \eqref{assumptionII}. 

\begin{proof}[Proof of \Cref{th:Korn_plasticity}]

\begin{align*}
\mathcal{E}_\varepsilon ^{loc} (u_\varepsilon , z_\varepsilon) 
&\geq \lambda_{\bold{A}}^\mathrm{min} \y{\nabla_\varepsilon ^s u_\varepsilon - z_\varepsilon }_{\ell^2}^2 + \lambda_{\bold{H}}^\mathrm{min} \y{z_\varepsilon}_{\ell^2}^2 + \kappa \y{\nabla_\varepsilon z_\varepsilon}_{\ell^2}^2 \\
&\geq \lambda_{\bold{A}}^\mathrm{min}  ( \y{\nabla_\varepsilon ^s u_\varepsilon - z_\varepsilon }_{\ell^2}^2 + \y{z_\varepsilon}_{\ell^2}^2 ) \\
&\geq \tfrac{1}{2} \lambda_{\bold{A}}^\mathrm{min}  ( \y{\nabla_\varepsilon ^s u_\varepsilon - z_\varepsilon }_{\ell^2} + \y{z_\varepsilon}_{\ell^2} )^2 \\
&\geq \tfrac{1}{2} \lambda_{\bold{A}}^\mathrm{min}   \y{\nabla_\varepsilon ^s u_\varepsilon }_{\ell^2} ^2 \\
&\geq c_\text{Korn} \tfrac{1}{2} \lambda_{\bold{A}}^\mathrm{min}   \y{\nabla_\varepsilon u_\varepsilon }_{\ell^2} ^2,
\end{align*}
where in the last inequality the standard discrete Korn inequality (\Cref{thm:Korn}) was used.
\end{proof}

\begin{lemma}
Let $h : \mathbb{R}^d \rightarrow \mathbb{R}^d$ with $h=0$ on $\mathbb{R}^d\setminus Q$ and $w: Q \rightarrow\mathbb{R}^{d \times d}$, then there exists a $\estconst >0$ such that
\begin{align}\label{eq:ineq1}
    \lambda_{\bold{A}}^{min}\y{\nabla^s h - w}^2_{L^2} +\lambda_{\bold{H}}^{min} \y{w}_{L^2}^2  \geq \estconst  \left( \y{ h }_{H^1}^2 + \y{ w}_{H^1}^2 \right) - \kappa \y{\nabla w}^2_{L^2}.
\end{align}
\end{lemma}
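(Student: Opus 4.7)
The idea is to reorganize the left-hand side so that the first term produces control on the full $H^1$-norm of $h$, and the surplus in the $w$-terms produces control on $\|w\|_{L^2}$, after which the gradient-of-$w$ contribution on the right can be absorbed by choosing $\nu$ small enough.

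First, I would split the $w$-term using the standing assumption $\lambda_{\bold{A}}^{\min}<\lambda_{\bold{H}}^{\min}$, writing
$$\lambda_{\bold{H}}^{\min}\y{w}_{L^2}^2 = \lambda_{\bold{A}}^{\min}\y{w}_{L^2}^2+\bigl(\lambda_{\bold{H}}^{\min}-\lambda_{\bold{A}}^{\min}\bigr)\y{w}_{L^2}^2.$$
For the $\lambda_{\bold{A}}^{\min}$-piece I would use the elementary bound $a^2+b^2\geq\tfrac12(a+b)^2$ together with the triangle inequality to obtain
$$\y{\nabla^sh-w}_{L^2}^2+\y{w}_{L^2}^2\geq \tfrac12\bigl(\y{\nabla^sh-w}_{L^2}+\y{w}_{L^2}\bigr)^2\geq\tfrac12\y{\nabla^sh}_{L^2}^2,$$
which is the same trick already used in the proof of \Cref{th:Korn_plasticity}.

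Next I would invoke the classical continuum Korn's inequality on $H_0^1(Q)$ (valid because $h=0$ outside $Q$), giving $\y{\nabla^sh}_{L^2}^2\geq c_K\y{\nabla h}_{L^2}^2$, followed by Poincaré's inequality to upgrade this to $\y{\nabla^sh}_{L^2}^2\geq c_K'\y{h}_{H^1}^2$. Combining with the previous step yields
$$\lambda_{\bold{A}}^{\min}\y{\nabla^sh-w}_{L^2}^2+\lambda_{\bold{H}}^{\min}\y{w}_{L^2}^2 \geq \tfrac12\lambda_{\bold{A}}^{\min}c_K'\y{h}_{H^1}^2+\bigl(\lambda_{\bold{H}}^{\min}-\lambda_{\bold{A}}^{\min}\bigr)\y{w}_{L^2}^2.$$

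Finally, since $\y{w}_{H^1}^2=\y{w}_{L^2}^2+\y{\nabla w}_{L^2}^2$, the right-hand side of \eqref{eq:ineq1} equals $\nu\y{h}_{H^1}^2+\nu\y{w}_{L^2}^2+(\nu-\kappa)\y{\nabla w}_{L^2}^2$. Choosing
$$\nu:=\min\Bigl\{\tfrac12\lambda_{\bold{A}}^{\min}c_K',\ \lambda_{\bold{H}}^{\min}-\lambda_{\bold{A}}^{\min},\ \kappa\Bigr\}>0$$
makes the $\y{h}_{H^1}^2$ and $\y{w}_{L^2}^2$ terms absorbable into the previous lower bound, while the condition $\nu\leq\kappa$ guarantees $(\nu-\kappa)\y{\nabla w}_{L^2}^2\leq 0$, so this term can simply be dropped. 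I do not expect any serious obstacle — the only thing to be careful about is ordering the ingredients so that the strict inequality $\lambda_{\bold{A}}^{\min}<\lambda_{\bold{H}}^{\min}$ is actually needed exactly to leave a positive surplus for the $\y{w}_{L^2}^2$ coercivity, as the $\nabla^sh-w$ term alone cannot see $w$ in isolation.
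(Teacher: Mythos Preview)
Your proof is correct and reaches the same conclusion, but the route differs from the paper's. You split off $\lambda_{\bold A}^{\min}\y{w}_{L^2}^2$ from the hardening term and then use $a^2+b^2\geq\tfrac12(a+b)^2$ to recover $\tfrac12\lambda_{\bold A}^{\min}\y{\nabla^s h}_{L^2}^2$, leaving exactly the surplus $(\lambda_{\bold H}^{\min}-\lambda_{\bold A}^{\min})\y{w}_{L^2}^2$ for the $w$-coercivity; this makes the standing hypothesis $\lambda_{\bold A}^{\min}<\lambda_{\bold H}^{\min}$ essential. The paper instead bounds $\y{\nabla^s h}_{L^2}^2$ from above via the triangle inequality and a weighted Young inequality with parameter $\varepsilon=\tfrac{\lambda_{\bold H}^{\min}}{2\lambda_{\bold A}^{\min}}$, then rearranges; this produces the $w$-coefficient $\tfrac{\lambda_{\bold H}^{\min}}{2}$ rather than $\lambda_{\bold H}^{\min}-\lambda_{\bold A}^{\min}$, so the argument does not actually rely on the ordering of the eigenvalues and yields a $\nu$ that does not degenerate as $\lambda_{\bold H}^{\min}\downarrow\lambda_{\bold A}^{\min}$. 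Your argument is more elementary and perfectly adequate here since the strict inequality is a global assumption; the paper's version is slightly more robust in the constants.
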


\begin{proof}
A simple calculation shows
\begin{align*}
\y{\nabla ^s h }^2_{L^2} &\leq \y{\nabla ^s h - w}_{L^2}^2 + \y{w }_{L^2}^2 + 2 \y{\nabla ^s h - w}_{L^2} \y{w }_{L^2} \\
&\leq \left( 1 + \frac{2 \lambda_{\bold{A}}^{min}}{\lambda_{\bold{H}}^{min}}\right) \y{\nabla ^s h - w}_{L^2}^2 + \left( 1 + \frac{\lambda_{\bold{H}}^{min}}{2 \lambda_{\bold{A}}^{min}}\right) \y{w }_{L^2}^2 ,
\end{align*}
where in the last step the $\varepsilon$-Young inequality ($ab \leq \frac{a ^2}{2 \varepsilon}  + \frac{\varepsilon b^2}{2} $) was used with $\varepsilon = \frac{\lambda_{\bold{H}}^{min}}{2 \lambda_{\bold{A}}^{min}}$.
Rearranging the above inequality for $\y{\nabla^s h - w}^2_{L^2}$ gives the estimate
\begin{align*}
    \lambda&_{\bold{A}}^{min}\y{\nabla^s h - w}^2_{L^2} +\lambda_{\bold{H}}^{min} \y{w}_{L^2}^2 \\
    &\geq \frac{\lambda_{\bold{A}}^{min} \lambda_{\bold{H}}^{min} }{\lambda_{\bold{H}}^{min} + 2\lambda_{\bold{A}}^{min} } \left( \y{\nabla^s h}^2_{L^ 2} - \left( 1+ \frac{\lambda_{\bold{H}}^{min}}{2\lambda_{\bold{A}}^{min}}  \right) \y{w}^2_{L^2} \right) +\lambda_{\bold{H}}^{min} \y{w}_{L^2}^2 \\
    &\geq \frac{\lambda_{\bold{A}}^{min} \lambda_{\bold{H}}^{min} c_K }{\lambda_{\bold{H}}^{min} + 2\lambda_{\bold{A}}^{min} }  \y{ h}^2_{H^ 1} + \frac{\lambda_{\bold{H}}^{min}}{2}  \y{w}^2_{L^2} + \kappa \y{\nabla w}^2_{L^2} - \kappa \y{\nabla w}^2_{L^2} \\
    &\geq \frac{\lambda_{\bold{A}}^{min} \lambda_{\bold{H}}^{min} c_K }{\lambda_{\bold{H}}^{min} + 2\lambda_{\bold{A}}^{min} }  \y{ h}^2_{H^ 1} + \min\left\lbrace \frac{\lambda_{\bold{H}}^{min}}{2} , \kappa \right\rbrace \y{ w}^2_{H^1} - \kappa \y{\nabla w}^2_{L^2} \\
    &\geq \estconst  \left( \y{ h }_{H^1}^2 + \y{ w}_{H^1}^2 \right) - \kappa \y{\nabla w}^2_{L^2},
\end{align*}
where $c_K$ is a constant that contains the constant from Korn's inequality and 
\begin{align}\label{eq:defintiona_alpha}
\estconst = \min \left\lbrace \frac{\lambda_{\bold{A}}^{min}\lambda_{\bold{H}}^{min} c_K}{\lambda_{\bold{H}}^{min} + 2\lambda_{\bold{A}}^{min} }, \min\left\lbrace \frac{\lambda_{\bold{H}}^{min}}{2} , \kappa \right\rbrace \right\rbrace > 0 .
\end{align}

\end{proof}

\begin{lemma} \label{lemma:boundedness of norms}
    Let $\eta >0$, and $\mathcal{E}(t,y) \leq \eta$, then
    \begin{align}
          C [u]_{W^{s,p}}^p +\estconst \left( \y{ u}_{H^1}^2 + \y{ z}_{H^1}^2 \right)- C_f \y{u}_{L^2} \leq \eta ,
    \end{align}
    where $C_f \coloneqq \y{f(t,\cdot)}_{L^2(Q)^d} < \infty$ and $\estconst$ as defined in \eqref{eq:defintiona_alpha}.
    As a consequence 
    \begin{align}
        [u]_{W^{s,p}}, \y{ u}_{H^1},  \y{ z}_{H^1}  < \infty.
    \end{align}
\end{lemma}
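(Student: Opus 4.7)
My plan is to assemble the estimate from the three summands of $\mathcal{E}(t,y)$ one at a time, invoking the previously stated lemmas, and then deduce the finiteness of the norms from the resulting coercivity-type inequality.

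\textbf{Step 1 (Non-local term).} By definition $\mathcal{E}^V(u)=[u]_{\mathcal{X}_p^s}^p$, and the fractional Korn inequality \eqref{eq:fractional Korn inequality} yields $[u]_{W^{s,p}}\leq C[u]_{\mathcal{X}_p^s}$, hence
\begin{equation*}
\mathcal{E}^V(u)\geq C^{-p}[u]_{W^{s,p}}^p .
\end{equation*}
Relabeling the constant gives a term of the form $C[u]_{W^{s,p}}^p$ as required.

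\textbf{Step 2 (Local term).} Dropping $\bold{A},\bold{H}$ to their minimal eigenvalues and keeping the gradient term gives
\begin{equation*}
\mathcal{E}^{loc}(u,z)\geq \lambda_{\bold{A}}^{\min}\y{\nabla^s u-z}_{L^2}^2+\lambda_{\bold{H}}^{\min}\y{z}_{L^2}^2+\kappa\y{\nabla z}_{L^2}^2 .
\end{equation*}
I now apply inequality \eqref{eq:ineq1} with $h=u$ and $w=z$; this is permissible since $u\in H_0^1(Q)^d$ has been extended by zero outside $Q$. The conclusion is
\begin{equation*}
\lambda_{\bold{A}}^{\min}\y{\nabla^s u-z}_{L^2}^2+\lambda_{\bold{H}}^{\min}\y{z}_{L^2}^2\geq \estconst\bigl(\y{u}_{H^1}^2+\y{z}_{H^1}^2\bigr)-\kappa\y{\nabla z}_{L^2}^2 ,
\end{equation*}
and adding the surviving $\kappa\y{\nabla z}_{L^2}^2$ exactly cancels the negative term, leaving $\mathcal{E}^{loc}(u,z)\geq \estconst(\y{u}_{H^1}^2+\y{z}_{H^1}^2)$.

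\textbf{Step 3 (Load term).} By Cauchy–Schwarz,
\begin{equation*}
|\mathcal{F}(t,u)|=\bigl|\textstyle\int_Q f(t,x)\cdot u(x)\,\mathrm{d}x\bigr|\leq \y{f(t,\cdot)}_{L^2}\y{u}_{L^2}=C_f\y{u}_{L^2}.
\end{equation*}
Combining the three bounds gives the stated inequality
\begin{equation*}
C[u]_{W^{s,p}}^p+\estconst\bigl(\y{u}_{H^1}^2+\y{z}_{H^1}^2\bigr)-C_f\y{u}_{L^2}\leq \mathcal{E}(t,y)\leq \eta .
\end{equation*}

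\textbf{Step 4 (Finiteness).} For the consequence, I will use a Young-type absorption: since $\y{u}_{L^2}\leq \y{u}_{H^1}$, the $\varepsilon$-Young inequality with parameter $\estconst$ gives $C_f\y{u}_{L^2}\leq \tfrac{\estconst}{2}\y{u}_{H^1}^2+\tfrac{C_f^2}{2\estconst}$. Substituting yields $\tfrac{\estconst}{2}\y{u}_{H^1}^2+\estconst\y{z}_{H^1}^2+C[u]_{W^{s,p}}^p\leq \eta+\tfrac{C_f^2}{2\estconst}$, from which each of the three norms is finite. I expect no serious obstacle here: the only point that requires a small bit of care is verifying that the hypotheses of inequality \eqref{eq:ineq1} apply (the extension-by-zero of $u$), and keeping track of the appearance and disappearance of the $\kappa\y{\nabla z}_{L^2}^2$ contribution when combining Steps 2 and 4.
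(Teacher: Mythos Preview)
Your proof is correct and follows essentially the same approach as the paper: both arguments bound the non-local term via the fractional Korn inequality \eqref{eq:fractional Korn inequality}, bound the local term by combining the eigenvalue estimate with inequality \eqref{eq:ineq1} (so that the $\kappa\y{\nabla z}_{L^2}^2$ contributions cancel), control the load term by Cauchy--Schwarz, and finally absorb $C_f\y{u}_{L^2}$ into $\tfrac{\estconst}{2}\y{u}_{H^1}^2$ via Young's inequality. The only difference is cosmetic: you treat the three summands separately, while the paper chains the estimates in a single display.
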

\begin{proof}
    \begin{align}
\eta \geq& ~ \mathcal{E}(t,y)  \nonumber\\ 
=& ~ 
\int_Q \int_Q \frac{1}{\Y{x-y}^{d+ps}} \Y{\left( u(x)-u(y) \right) \cdot \tfrac{x-y}{\Y{x-y}}}^p \dif x \dif y  \nonumber\\
&+ \int_Q \left(\nabla ^s u(x) - z(x) \right) : \bold{A}(x) : \left(\nabla ^s u(x) - z(x) \right) \dif x \nonumber\\
&+ \int_Q z(x) : \bold{H}(x) : z(x) + \kappa \Y{\nabla  z (x)}^2 \dif x \nonumber\\
&- \int_{Q} f(t,x) u(x) \dif x  \nonumber\\
\overset{\substack{\eqref{eq:fractional Korn inequality} \\ \eqref{eq_eigenvalues}}}{~~\geq}& ~  [u]_{\mathcal{X}_p^s}^p  + \lambda_{\bold{A}}^{min} \y{\nabla ^s u - z }_{L^2}^2 + \lambda_{\bold{H}}^{min} \y{z}_{L^2}^2 + \kappa \y{\nabla z}_{L^2}^2 \nonumber \\
&- \y{f(t,\cdot)}_{L^2} \y{u(x)}_{L^2}  \nonumber\\
\overset{\eqref{eq:ineq1}}{~~\geq}& ~  C [u]_{W^{s,p}}^p +\estconst \left( \y{ u }_{H^1}^2 + \y{ z}_{H^1}^2 \right)- C_f \y{u}_{L^2}. \label{eq:lower_estimate_energy} 
\end{align}
Here $C_f \coloneqq \y{f(t,\cdot)}_{L^2(Q)^d} < \infty$ and $\estconst$ as defined in \eqref{eq:defintiona_alpha}.

On the last term on the right-hand side we apply the $\varepsilon$-Young inequality ($ab \leq \frac{a ^2}{2 \varepsilon}  + \frac{\varepsilon b^2}{2} $) with $\varepsilon = \nu$ and get
\begin{align*}
    C_f \y{u}_{L^2} \leq \frac{C_f^2}{2\nu} + \frac{\nu}{2} \y{u}_{L^2}^2 \leq \frac{C_f^2}{2\nu} + \frac{\nu}{2} \y{u}_{H^1}^2.
\end{align*}
Therefore 
\begin{align*}
    \eta \geq  C [u]_{W^{s,p}}^p + \nu \y{ z}_{H^1}^2 +\frac{\nu}{2} \y{ u }_{H^1}^2 - \frac{C_f^2}{2\nu}
\end{align*}
and we can conclude:
\begin{align} \label{eq:bounds_from_E_0}
[u]_{W^{s,p}}, \y{ u }_{H^1},  \y{ z}_{H^1} < \infty.
\end{align}

\end{proof}

~

All remaining lemmas in this section are used to prove \Cref{lemma:uniqueness_of_solutions}, which establishes the uniqueness of solutions for the continuum system.

\begin{lemma}\label{lemma:beta}
    Let $y_\mathrm{min}$ be an energetic solution of the rate-independent system associated to $\mathcal{E}$ and $\Psi$ as defined in \eqref{eq:limit_func} and \eqref{eq:def dissipation functional}. Then there exists a $\beta> 0$ such that the inequality 
    \begin{align} \label{eq:alpha_connvexity}
        \mathcal{E}(t,y) - \mathcal{E}(t,y_\mathrm{min}) + \Psi(y_\mathrm{min} - y) \geq \beta \y{ y_\mathrm{min} - y}_Y
    \end{align}
    is valid for any $y \in Y$ and any $t\in I$. 
\end{lemma}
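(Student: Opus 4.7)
The plan is to interpret the stability condition \eqref{stability} satisfied by $y_\mathrm{min}$ as a minimum principle: tested with $\tilde y = y$ it yields $\mathcal{E}(t,y_\mathrm{min}) \le \mathcal{E}(t,y) + \Psi(y - y_\mathrm{min})$, so $y_\mathrm{min}$ globally minimizes the convex functional $F(\tilde y) := \mathcal{E}(t,\tilde y) + \Psi(\tilde y - y_\mathrm{min})$ on $Y$. Because $\Psi$ is convex, $1$-homogeneous and vanishes at $0$, the associated Euler--Lagrange inclusion $-D\mathcal{E}(t,y_\mathrm{min}) \in \partial\Psi(0)$ is equivalent to the pointwise bound $\langle D\mathcal{E}(t,y_\mathrm{min}), v\rangle + \Psi(v) \ge 0$ for every $v \in Y$.

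The quantitative input is strong convexity of $\mathcal{E}(t,\cdot)$ in the $Y$-norm. I would combine: (i) the inequality \eqref{eq:ineq1} applied to $(h,w) = (u-u_\mathrm{min},\, z-z_\mathrm{min})$, which turns the quadratic part of $\mathcal{E}^\mathrm{loc}$ into the $H^1$-coercive term $\nu(\y{u-u_\mathrm{min}}_{H^1}^2 + \y{z-z_\mathrm{min}}_{H^1}^2)$; (ii) $p$-uniform convexity of $|\cdot|^p$ (for $p\ge2$) applied inside the integral defining $\mathcal{E}^V$, combined with the fractional Korn inequality \eqref{eq:fractional Korn inequality} to control the $[u-u_\mathrm{min}]_{W^{s,p}}$ part; (iii) the linearity of $\mathcal{F}(t,\cdot)$. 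Together these yield some $\alpha > 0$ with $\mathcal{E}(t,y) - \mathcal{E}(t,y_\mathrm{min}) - \langle D\mathcal{E}(t,y_\mathrm{min}), y-y_\mathrm{min}\rangle \ge \alpha \y{y-y_\mathrm{min}}_Y^2$. Splitting $\mathcal{E}(t,y) - \mathcal{E}(t,y_\mathrm{min}) + \Psi(y_\mathrm{min}-y)$ as $[\mathcal{E}(t,y) - \mathcal{E}(t,y_\mathrm{min}) - \langle D\mathcal{E}(t,y_\mathrm{min}), y - y_\mathrm{min}\rangle] + [\langle D\mathcal{E}(t,y_\mathrm{min}), y - y_\mathrm{min}\rangle + \Psi(y_\mathrm{min}-y)]$, the first bracket is $\ge \alpha\y{y-y_\mathrm{min}}_Y^2$ by strong convexity, while the second is $\ge 0$ by the Euler--Lagrange bound applied to $v = y - y_\mathrm{min}$, using the customary symmetry $\rho(-w) = \rho(w)$ of the dissipation density. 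Consequently $\mathcal{E}(t,y) - \mathcal{E}(t,y_\mathrm{min}) + \Psi(y_\mathrm{min}-y) \ge \alpha\y{y-y_\mathrm{min}}_Y^2$, which already delivers the claim with $\beta=\alpha$ on the far region $\y{y-y_\mathrm{min}}_Y \ge 1$.

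The main obstacle is the complementary small-amplitude regime $\y{y-y_\mathrm{min}}_Y < 1$, in which the desired \emph{linear} bound is strictly stronger than the quadratic one obtained above. It requires a strict version of the Euler--Lagrange inclusion, namely a uniform gap $\Psi(v) + \langle D\mathcal{E}(t,y_\mathrm{min}), v\rangle \ge \beta_0 \y{v}_Y$ for every $v \in Y$, expressing that $-D\mathcal{E}(t,y_\mathrm{min})$ sits in the interior of $\partial\Psi(0)$ with a uniform gap measured in the $Y^*$-norm. I would try to produce such a gap by a contradiction-plus-rescaling argument: a sequence of unit vectors $v_n \in Y$ violating the gap would, via the compactness from \Cref{thm:Poincare} and the fractional Korn inequality, yield a non-zero weak limit $v_\infty$ witnessing a second minimizer of $F$ and contradicting the strong convexity established above. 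This structural step, where the precise shapes of $\mathcal{E}^V$, $\mathcal{E}^\mathrm{loc}$ and $\Psi$ must enter, is where I expect the bulk of the technical work to lie.
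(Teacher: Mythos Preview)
Your core strategy coincides with the paper's: derive the first-order condition $\langle D\mathcal{E}(t,y_\mathrm{min}), v\rangle + \Psi(v) \geq 0$ from stability (the paper obtains this by testing \eqref{stability} with $\tilde y = y_\mathrm{min} + \delta(y-y_\mathrm{min})$ and sending $\delta\to 0$), and combine it with the strong-convexity estimate coming from Lindqvist's inequality \eqref{eq:lindqvist} for $\mathcal{E}^V$ and the quadratic expansion plus \eqref{eq:ineq1} for $\mathcal{E}^{loc}$. The paper arrives at exactly your lower bound $\tfrac{1}{2^{p-1}-1}[u-u_\mathrm{min}]_{W^{s,p}}^p + \nu\bigl(\y{u-u_\mathrm{min}}_{H^1}^2+\y{z-z_\mathrm{min}}_{H^1}^2\bigr)$ and then simply asserts ``$\geq \beta\y{y-y_\mathrm{min}}_Y$'' as the final line, without treating the small-amplitude regime you single out.

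Your proposed remedy for that regime, however, cannot succeed. The strict gap $\Psi(v) + \langle D\mathcal{E}(t,y_\mathrm{min}), v\rangle \geq \beta_0\y{v}_Y$ fails structurally: $\Psi$ depends only on the $z$-component, so for $v=(v_u,0)$ one has $\Psi(v)=0$, while stability tested with $\tilde y=(u_\mathrm{min}\pm v_u,\,z_\mathrm{min})$ (which incurs zero dissipation) forces $\langle D_u\mathcal{E}(t,y_\mathrm{min}),v_u\rangle=0$. Hence the left-hand side vanishes identically along the entire $u$-subspace, and no $\beta_0>0$ can exist; these are genuine zero-gap directions, not artifacts that a compactness-plus-rescaling argument could exclude. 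In particular the ``non-zero weak limit contradicting strong convexity'' step never gets off the ground, because the putative violating sequence can be taken constant. Note also that the symmetry $\rho(-w)=\rho(w)$ you invoke is not among the paper's standing hypotheses on $\rho$ (only convexity, positive $1$-homogeneity, and $\rho\ge 0$); the same $\Psi(y-y_\mathrm{min})$ versus $\Psi(y_\mathrm{min}-y)$ discrepancy is present when the paper applies its first-order condition \eqref{eq:bew_uniq_1}.
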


This strict convexity estimate is essential for the uniqueness proof of \Cref{lemma:uniqueness_of_solutions}, providing the $\beta$-coercivity property needed for the Bihari-type argument.

\begin{proof}[Proof of \Cref{lemma:beta}]
An energetic solution $y_\mathrm{min}$ of the considered rate-independent system satisfies the stability condition \eqref{stability}, i.e. $\mathcal{E}(t,y_\mathrm{min}) \leq \mathcal{E}(t, \tilde{y}) + \Psi(\tilde{y}-y_\mathrm{min})$ for any $\tilde{y} \in Y$.
For any $y\in Y$ we take $\tilde{y} \coloneqq y_\mathrm{min} + \delta (y- y_\mathrm{min}) $ with $\delta >0$. 
Then it is
\begin{align*}
    0 &\leq \mathcal{E}(t, y_\mathrm{min} + \delta (y - y_\mathrm{min})) - \mathcal{E}(t,y_\mathrm{min}) + \Psi(\delta(y-y_\mathrm{min})) \\
    \Rightarrow 0 &\leq \frac{\mathcal{E}(t, y_\mathrm{min} + \delta (y - y_\mathrm{min})) - \mathcal{E}(t,y_\mathrm{min})}{\delta} + \frac{\Psi(\delta(y-y_\mathrm{min}))}{\delta} \\
    & = \frac{\mathcal{E}(t, y_\mathrm{min} + \delta (y - y_\mathrm{min})) - \mathcal{E}(t,y_\mathrm{min})}{\delta} + \Psi(y-y_\mathrm{min}),
\end{align*}
where in the last step, the 1-homogeneity of $\Psi$ was used. Passing to the limit $\delta \rightarrow 0$ in the previous inequality yields 
\begin{align} \label{eq:bew_uniq_1}
    0 \leq \langle D \mathcal{E} (t, y_\mathrm{min}) , y -y_\mathrm{min} \rangle + \Psi(y-y_\mathrm{min}).
\end{align}
In order to verify the desired inequality \eqref{eq:alpha_connvexity}, we write its left-hand side as
\begin{align*}
    \mathcal{E}(t,y) - \mathcal{E}(t,y_\mathrm{min}) + \Psi(y_\mathrm{min} - y) 
    &= \mathcal{E}^V(u) - \mathcal{E}^V(u_\mathrm{min}) + \mathcal{E}^{loc}(u,z) - \mathcal{E}^{loc}(u_\mathrm{min}, z_\mathrm{min}) \\
    &\quad+ \mathcal{F}(t,u_\mathrm{min}) - \mathcal{F}(t,u) + \Psi(y_\mathrm{min} - y)
\end{align*}
and further estimate the respective terms. 
For notational simplicity, we define
\begin{align*}
&K(x,y)\coloneqq \frac{1}{\Y{x-y}^{d+ps}} ,  \\
&w =w(x,y) \coloneqq \left(u(x)-u(y)\right) \cdot \frac{x-y}{\Y{x-y}}, \\
&w_\mathrm{min} = w_\mathrm{min}(x,y) \coloneqq \left(u_\mathrm{min}(x) - u_\mathrm{min}(y)\right) \cdot\frac{x-y}{\Y{x-y}}.
\end{align*}
Then
\begin{align}
    &\mathcal{E}^V(u) - \mathcal{E}^V(u_\mathrm{min}) =  \int_Q \int_Q K(x,y) \left( \Y{w(x,y)}^p - \Y{w_\mathrm{min}(x,y)}^p \right) \dif x \dif y \nonumber \\
    &\overset{\eqref{eq:lindqvist}}{\geq}  \int_Q \int_Q K(x,y) \left( p \Y{w_\mathrm{min}}^{p-2} w_\mathrm{min} \cdot (w - w_\mathrm{min}) + \frac{\Y{w- w_\mathrm{min}}^p}{2^{p-1}-1}
     \right) \dif x \dif y \nonumber \\    
    &\overset{\eqref{eq:fractional Korn inequality}}{\geq} \langle D \mathcal{E}^V(u_\mathrm{min}), u-u_\mathrm{min} \rangle + \frac{1}{2^{p-1}-1} [u-u_\mathrm{min}]_{W^{s,p}}^p. \label{eq:alpha_convexity_eqEV}
\end{align}
To estimate the difference of the $\mathcal{E}^{loc}$-terms we note that
\begin{align}
     \mathcal{E}^{loc}(u,z) 
     =~& \int_Q (e^\mathrm{el}_\mathrm{min} + (e^\mathrm{el}-e^\mathrm{el}_\mathrm{min})) : \bold{A} : (e^\mathrm{el}_\mathrm{min} + (e^\mathrm{el}-e^\mathrm{el}_\mathrm{min})) \nonumber \\
     &\quad+ (z_\mathrm{min} + (z -z_\mathrm{min})) : \bold{H}: (z_\mathrm{min} + (z -z_\mathrm{min})) \nonumber \\
     &\quad+ \kappa \Y{\nabla z_\mathrm{min} + (\nabla z - \nabla z_\mathrm{min})}^2 \dif x  \nonumber\\
     \geq~& \int_Q e^\mathrm{el}_\mathrm{min} :\bold{A}: e^\mathrm{el}_\mathrm{min} + 2 e^\mathrm{el}_\mathrm{min} :\bold{A}: (e^\mathrm{el}-e^\mathrm{el}_\mathrm{min}) + (e^\mathrm{el}-e^\mathrm{el}_\mathrm{min}):\bold{A}:(e^\mathrm{el}-e^\mathrm{el}_\mathrm{min}) \nonumber \\
     &\quad+ z_\mathrm{min}:\bold{H}:z_\mathrm{min} + 2z_\mathrm{min}:\bold{H}:(z-z_\mathrm{min}) + (z-z_\mathrm{min}):\bold{H}:(z-z_\mathrm{min}) \nonumber \\
     &\quad+ \kappa \left( \Y{\nabla z_\mathrm{min}}^2 + 2 \nabla z_\mathrm{min} : (\nabla z -\nabla z_\mathrm{min}) + \Y{\nabla z -\nabla z_\mathrm{min}}^2 \right) \dif x \nonumber \\
     \overset{\eqref{eq_eigenvalues}}{\geq}& \mathcal{E}^{loc}(u_\mathrm{min}, z_\mathrm{min}) + \langle D\mathcal{E}^{loc}(y_\mathrm{min}), y-y_\mathrm{min}\rangle  + \lambda_{\bold{A}}^{min}\y{e^\mathrm{el}-e^\mathrm{el}_\mathrm{min}}^2_{L^2} \nonumber \\
     & + \lambda_{\bold{H}}^{min} \y{z-z_\mathrm{min}}^2_{L^2} + \kappa \y{\nabla z -\nabla z_\mathrm{min}}^2_{L^2} \nonumber \\
     =~& \mathcal{E}^{loc}(u_\mathrm{min}, z_\mathrm{min}) + \langle D\mathcal{E}^{loc}(y_\mathrm{min}), y-y_\mathrm{min}\rangle \nonumber \\
     &+ \lambda_{\bold{A}}^{min} \y{\nabla^s u - \nabla^s u_\mathrm{min} - (z-z_\mathrm{min})}^2_{L^2} + \lambda_{\bold{H}}^{min} \y{z-z_\mathrm{min}}^2_{L^2} \nonumber \\
     &+ \kappa \y{\nabla z -\nabla z_\mathrm{min}}^2_{L^2} \nonumber \\
     \overset{\eqref{eq:ineq1}}{\geq}& \mathcal{E}^{loc}(u_\mathrm{min}, z_\mathrm{min}) + \langle D\mathcal{E}^{loc}(y_\mathrm{min}), y-y_\mathrm{min}\rangle \nonumber \\
     &+ \estconst \left( \y{u- u_\mathrm{min}}_{H^1}^2 + \y{z-z_\mathrm{min}}_{H^1}^2 \right).  \label{eq:alpha_convexity_eqEloc}
\end{align}
Regarding the difference of the $\mathcal{F}$-terms we have
\begin{align}
     \mathcal{F}(t,u_\mathrm{min}) - \mathcal{F}(t,u) = \int_Q f(t,x) ( u_\mathrm{min} - u ) \dif x
     = - \langle D\mathcal{F}(u_\mathrm{min}) , u- u_\mathrm{min} \rangle  .\label{eq:alpha_convexity_eqF}
\end{align}
Combining \eqref{eq:alpha_convexity_eqEV}--\eqref{eq:alpha_convexity_eqF} we end up with
\begin{align*}
    \mathcal{E}(t,y)& - \mathcal{E}(t,y_\mathrm{min}) + \Psi(y_\mathrm{min} - y) \\
    \geq~&  \langle D\mathcal{E}(y_\mathrm{min}), y-y_\mathrm{min}\rangle  + \Psi(y_\mathrm{min} - y)  \\
    &+\frac{1}{2^{p-1}-1} [u-u_\mathrm{min}]_{W^{s,p}}^p + \estconst \left( \y{u- u_\mathrm{min}}_{H^1}^2 + \y{z-z_\mathrm{min}}_{H^1}^2 \right)\\
    \overset{\eqref{eq:bew_uniq_1}}{\geq}& \beta \y{y- y_\mathrm{min}}_Y .
\end{align*}
\end{proof}

\begin{lemma}[Lipschitz-continuity of energetic solutions] \label{lemma:lipschitz_continuity}
    Let $y: I \rightarrow Y$ be an energetic solution of the rate-independent system associated to $\mathcal{E}$ and $\Psi$. Then for any $s, t \in I$ there exists a positive constant $C$ such that the solution $y$ satisfies
    \begin{align} \label{eq:lipschitz_estimate}        
        [u(s)-u(t)]_{W^{s,p}}, \y{u(s)-u(t)}_{H^1}, \y{z(s)-z(t)}_{H^1} \leq C \Y{t-s}.
    \end{align}
\end{lemma}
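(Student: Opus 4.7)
The plan is to apply Lemma~\ref{lemma:beta} at time $s$ with $y_\mathrm{min} = y(s)$ and test vector $y = y(t)$, and then to rewrite the resulting energy difference by combining the energy balance~\eqref{energyBalance} with the explicit time dependence of the load term $\mathcal{F}$. Since only $\mathcal{F}$ depends on time in $\mathcal{E}$, for every fixed $y = (u, z) \in Y$ one has $\mathcal{E}(s, y) - \mathcal{E}(t, y) = \int_Q (f(t,x) - f(s,x)) u(x) \dif x$, whereas the energy balance yields $\mathcal{E}(t, y(t)) - \mathcal{E}(s, y(s)) = \int_s^t \partial_r \mathcal{E}(r, y(r)) \dif r - \int_s^t \Psi(\dot y(r)) \dif r$. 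Substituting the decomposition $\mathcal{E}(s, y(t)) - \mathcal{E}(s, y(s)) = [\mathcal{E}(s, y(t)) - \mathcal{E}(t, y(t))] + [\mathcal{E}(t, y(t)) - \mathcal{E}(s, y(s))]$ into the inequality provided by Lemma~\ref{lemma:beta} yields
\begin{align*}
\beta \y{y(t) - y(s)}_Y \leq~ & \int_Q (f(t,x) - f(s,x)) u(t,x) \dif x + \int_s^t \partial_r \mathcal{E}(r, y(r)) \dif r \\
& + \Big[\Psi(y(s) - y(t)) - \int_s^t \Psi(\dot y(r)) \dif r\Big].
\end{align*}

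The bracket is controlled via Jensen's inequality: since $\Psi$ is convex and $1$-homogeneous, $\Psi(y(s) - y(t)) = \Psi\bigl(-\int_s^t \dot y(r) \dif r\bigr) \leq \int_s^t \Psi(-\dot y(r)) \dif r$. Under the symmetry $\rho(-v) = \rho(v)$ standard in plasticity dissipation potentials, $\int_s^t \Psi(-\dot y) \dif r = \int_s^t \Psi(\dot y) \dif r$ and the bracket is non-positive. The two remaining terms are bounded by Cauchy--Schwarz, the Lipschitz estimate $\y{f(t) - f(s)}_{L^2} \leq L_f |t-s|$, and a uniform a priori bound $\y{u(r)}_{L^2} \leq M$ for $r \in I$. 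The latter follows from \Cref{lemma:boundedness of norms} applied to $y(r)$: along the energetic solution, $\mathcal{E}(r, y(r))$ is uniformly bounded by its initial value plus the linear-in-time contribution $\int_0^r |\partial_\tau \mathcal{E}(\tau, y(\tau))| \dif \tau$, which is controlled by the Lipschitz regularity of $f$.

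Combining these estimates gives $\y{y(t) - y(s)}_Y \leq \tfrac{2 L_f M}{\beta} |t - s|$, and the individual Lipschitz bounds on $[u(s) - u(t)]_{W^{s,p}}$, $\y{u(s) - u(t)}_{H^1}$, and $\y{z(s) - z(t)}_{H^1}$ follow from the definition $\y{y}_Y^2 = [u]_{W^{s,p}}^2 + \y{u}_{H^1}^2 + \y{z}_{H^1}^2$.

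The main obstacle I anticipate is the cancellation of the bracket when $\rho$ is asymmetric: then Jensen only yields $\Psi(y(s) - y(t)) \leq \int_s^t \Psi(-\dot y) \dif r$, and the forward dissipation integral produced by the energy balance does not automatically dominate this. A symmetrized version of the argument, obtained by averaging with the analogous estimate from Lemma~\ref{lemma:beta} applied at time $t$ with test $y(s)$, should replace the problematic bracket by a term involving $\int_s^t (\Psi(\dot y) + \Psi(-\dot y)) \dif r$; since $\Psi(v) + \Psi(-v)$ is the support-width of the elastic domain in direction $v$ and hence bounded by $C|v|$, this residual can in turn be absorbed using the energy balance together with the hardening coercivity, closing the Lipschitz estimate.
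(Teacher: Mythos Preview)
Your approach is correct and genuinely different from the paper's. Both start from Lemma~\ref{lemma:beta}, but you bound the two force terms $\int_Q(f(t)-f(s))u(t)$ and $\int_s^t\partial_r\mathcal{E}(r,y(r))\,\dif r$ \emph{separately} using the uniform a~priori bound $\|u(r)\|_{L^2}\le M$ from \Cref{lemma:boundedness of norms}, which yields the Lipschitz estimate in one stroke. The paper instead combines these two terms into $\int_s^t\|\dot f(\tau)\|_{L^2}\,\|u(\tau)-u(s)\|_{L^2}\,\dif\tau$; this produces a nonlinear integral inequality of the form $d(s)^2\le C\int_s^t d(\tau)\,\dif\tau$ for $d(\cdot)=\|y(\cdot)-y(t)\|_Y^{1/2}$, which is then resolved by a short Bihari/square-root ODE argument. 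Your route is more elementary and avoids the ODE trick at the price of invoking the extra a~priori estimate; the paper's route is self-contained (no separate bound on $\|u\|_{L^2}$ is needed) and follows Mielke's classical proof of Theorem~3.4 in \cite{MIELKE2}. Regarding the bracket $\Psi(y(s)-y(t))-\int_s^t\Psi(\dot y)$: the obstacle you flag for asymmetric $\rho$ is largely a sign-convention artifact. If one uses the sign coming from the stability condition \eqref{stability}, namely $\Psi(\tilde y-y_{\min})$ rather than $\Psi(y_{\min}-\tilde y)$, then testing at the \emph{earlier} time and letting the later state play the role of $\tilde y$ gives $\Psi(y(t)-y(s))=\Psi(\int_s^t\dot y)\le\int_s^t\Psi(\dot y)$ by the triangle inequality, and the bracket is non-positive without any symmetry assumption; your averaging workaround is therefore unnecessary.
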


The proof can be done exactly as in the proof of Theorem 3.4 in \cite{MIELKE2}. For the sake of completeness, the proof is presented in a more detailed version below.
\begin{proof}[Proof of \Cref{lemma:lipschitz_continuity} (Lipschitz-continuity of energetic solutions)]
Inserting $y_\mathrm{min}= y(t)$ and $y = y(s)$ in \eqref{eq:alpha_connvexity}, we get the estimate
\begin{align*}
    \beta \y{y(t)-y(s)}_Y \leq~& \mathcal{E}(t, y(s)) - \mathcal{E}(t, y(t)) + \Psi(y(s)-y(t)) \\
    \leq~& \mathcal{E}(t, y(s)) - \mathcal{E}(t, y(t)) + \int_t^s\Psi(\dot{y}(\tau)) \dif \tau\\
    \overset{\eqref{energyBalance}}{=}& \mathcal{E}(t, y(s))- \mathcal{E}(s, y(s)) + \int_t^s \partial_\tau \mathcal{E}(\tau, y(\tau)) \dif \tau \\
    =~& \int_s^t \partial_\tau \mathcal{E}(\tau, y(s))+ \int_t^s \partial_\tau \mathcal{E}(\tau, y(\tau)) \dif \tau \\
    =~& \int_s^t \partial_\tau \mathcal{E}(\tau, y(s)) - \partial_\tau \mathcal{E}(\tau, y(\tau)) \dif \tau  \\
    \leq~& \int_s^t \y{\dot{f}(\tau)}_{L^2} \y{u(\tau)-u(t)}_{L^2} \dif \tau \\
    \leq~& C_f  \int_s^t \left(\y{y(\tau)-y(t)}_Y \right)^\frac{1}{2}\dif \tau .
\end{align*}
Defining $d(s) \coloneqq  \left(\y{y(s)-y(t)}_Y \right)^\frac{1}{2}$, the above inequality reduces to 
\begin{align}\label{eq:lipsch_eq1}
    d(s)^2 \leq \frac{C_f}{\beta} \int_s^t d(\tau) \dif \tau.
\end{align}
Then we consider $h(r) \coloneqq \int_{t-r}^t d(\tau) \dif \tau$ for $r>0$. According to the Leibniz integral rule, its derivative turns out to be $h^\prime (r) = d(t-r)$. We combine this identity with \eqref{eq:lipsch_eq1} and get
\begin{align}\label{eq:lipsch_eq2}
    h^\prime (r) = d(t-r) \leq \left(\frac{C_f}{\beta} \int_{t-r}^t d(\tau) \dif \tau\right)^\frac{1}{2} = \left(\frac{C_f}{\beta} h(r) \right)^\frac{1}{2}
\end{align}
and thus
\begin{align*}
    2 \frac{\dif}{\dif r}\sqrt{h(r)}= \frac{h^\prime (r)}{\sqrt{h(r)}} \leq \left(\frac{C_f}{\beta} \right)^\frac{1}{2}.
\end{align*}
Integration from $0$ to $r$ and using $h(0)=0$, this turns into
$2 \sqrt{h(r)} \leq \left(\frac{C_f}{\beta} \right)^\frac{1}{2} r$. We rewrite this inequality as $h(r) \leq \frac{C_f}{\beta}\frac{r^2}{4}$ and combine it with \eqref{eq:lipsch_eq2} to deduce
\begin{align*}
    d(t-r) \leq \frac{C_f}{2\beta} r.
\end{align*}
Inserting the explicit definition of $d(t-r)$, we have $\left(\y{y(t-r)-y(t)}_Y \right)^\frac{1}{2} \leq \frac{C_f}{2\beta} r $ and from this we get 
\begin{align*}
   [u(t-r)-u(t)]_{W^{s,p}}, \y{u(t-r)-u(t)}_{H^1}, \y{z(t-r)-z(t)}_{H^1} \leq C r .
\end{align*}
\end{proof}

\begin{lemma}
    Let $y_1, y_2$ be solutions of the rate-independent system associated with $\mathcal{E}$ and $\Psi$, then
    \begin{align}\label{eq:mon1}
        \langle \text{D}\mathcal{E}\left(t,y_1(t)\right) -\text{D}\mathcal{E}(t,y_2(t)), \dot{y}_1(t) - \dot{y}_2(t) \rangle \leq 0.
    \end{align}
\end{lemma}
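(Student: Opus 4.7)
The plan is to exploit the equivalence, mentioned at the start of Section 3.1, between the energetic formulation \eqref{stability}--\eqref{energyBalance} and the evolutionary variational inequality \eqref{variationalIeuality} for convex energies, and then to apply a standard Minty-type symmetric testing argument.

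First I would verify that $\mathcal{E}(t,\cdot)$ is convex on $Y$. The non-local part $\mathcal{E}^V$ is the superposition, with non-negative kernel $\Y{x-y}^{-d-ps}$, of compositions $\Y{L_{x,y}(u)}^p$ of the linear map $L_{x,y}: u\mapsto (u(x)-u(y))\cdot (x-y)/\Y{x-y}$ with the convex function $\Y{\cdot}^p$; the local part $\mathcal{E}^{loc}$ is a positive quadratic form in $(\nabla^s u - z, z, \nabla z)$ since $\bold{A},\bold{H}$ are positive tensors and $\kappa > 0$; and $-\mathcal{F}(t,\cdot)$ is linear. Hence $\mathcal{E}(t,\cdot)$ is convex, so both energetic solutions $y_1,y_2$ also satisfy \eqref{variationalIeuality} for a.e. $t\in I$. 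By \Cref{lemma:lipschitz_continuity}, each $y_i$ is Lipschitz continuous in $t$ with values in $Y$, so the derivatives $\dot{y}_i(t)\in Y$ exist for a.e. $t\in I$ and are admissible test vectors.

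The main step is then a one-line exchange of test functions. For a.e. $t\in I$, testing \eqref{variationalIeuality} for $y_1$ with $v:=\dot{y}_2(t)$ and for $y_2$ with $v:=\dot{y}_1(t)$ yields
\begin{align*}
\langle D\mathcal{E}(t, y_1(t)),\, \dot{y}_2(t) - \dot{y}_1(t)\rangle + \Psi(\dot{y}_2(t)) - \Psi(\dot{y}_1(t)) &\geq 0, \\
\langle D\mathcal{E}(t, y_2(t)),\, \dot{y}_1(t) - \dot{y}_2(t)\rangle + \Psi(\dot{y}_1(t)) - \Psi(\dot{y}_2(t)) &\geq 0.
\end{align*}
Adding the two inequalities, the $\Psi$ contributions cancel, giving
$$\langle D\mathcal{E}(t, y_1(t)) - D\mathcal{E}(t, y_2(t)),\, \dot{y}_2(t) - \dot{y}_1(t)\rangle \geq 0,$$
which after multiplication by $-1$ is precisely \eqref{eq:mon1}.

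There is no genuine obstacle: once convexity of $\mathcal{E}(t,\cdot)$ is recorded and the Lipschitz regularity of \Cref{lemma:lipschitz_continuity} is invoked to ensure $\dot{y}_i(t)\in Y$, the statement is an immediate consequence of adding two instances of the VI with swapped test vectors. The only point deserving care is justifying the passage to the VI form, which is why the convexity verification is the preliminary I would carry out explicitly.
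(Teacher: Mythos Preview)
Your proof is correct and follows essentially the same approach as the paper: both test \eqref{variationalIeuality} for $y_1$ with $v=\dot{y}_2(t)$ and for $y_2$ with $v=\dot{y}_1(t)$, then add so that the $\Psi$ terms cancel. Your additional care in explicitly recording the convexity of $\mathcal{E}(t,\cdot)$ to justify the passage from \eqref{stability}--\eqref{energyBalance} to \eqref{variationalIeuality}, and in invoking \Cref{lemma:lipschitz_continuity} to ensure $\dot{y}_i(t)\in Y$, makes the argument more self-contained than the paper's version, which simply asserts that energetic solutions satisfy the VI.
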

\begin{proof}
    Since $y_1, y_2$ are energetic solutions and thus satisfy the variational inequality
    \eqref{variationalIeuality}, we chose the respective other solution in \eqref{variationalIeuality} as the test function and get the two inequalities:
    \begin{align}
        \langle \text{D}\mathcal{E}\left(t,y_1(t)\right) ,  \dot{y}_1(t) - \dot{y}_2(t) \rangle + \Psi (\dot{y}_1(t)) - \Psi (\dot{y}_2(t)) &\leq 0 \label{eq:varineq1} \\
        -\langle \text{D}\mathcal{E}\left(t,y_2(t)\right) ,  \dot{y}_1(t) - \dot{y}_2(t) \rangle - \Psi (\dot{y}_1(t)) + \Psi (\dot{y}_2(t)) &\leq 0 \label{eq:varineq2}
    \end{align}
Addition of \eqref{eq:varineq1} and \eqref{eq:varineq2} results in
\begin{align*}
        \langle \text{D}\mathcal{E}\left(t,y_1(t)\right) -\text{D}\mathcal{E}(t,y_2(t)), \dot{y}_1(t) - \dot{y}_2(t) \rangle \leq 0.
    \end{align*}
\end{proof}

\begin{lemma}\label{lemma:for_uniqness}
For any $y_1 , y_2 \in Y$ we have
\begin{align}\label{eq:unif_convexity_est}
    \langle D \mathcal{E}(t,y_1(t)) -& D \mathcal{E}(t,y_2(t)), y_1(t)-y_2(t) \rangle \nonumber \\
    &\geq C \left(  [u_1 -u_2]_{W^{s,p}}^p +  \y{u_1 -u_2}^2_{H^1} + \y{z_1 -z_2}^2_{H^1} \right) .    
\end{align}
\end{lemma}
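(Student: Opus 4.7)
The plan is to decompose the energy $\mathcal{E} = \mathcal{E}^V + \mathcal{E}^{loc} - \mathcal{F}$ as in \eqref{eq:limit_func} and treat each contribution separately, mirroring the structure of the proof of \Cref{lemma:beta} but now working with the two-argument bilinear difference $\langle D\mathcal{E}(t,y_1) - D\mathcal{E}(t,y_2), y_1-y_2\rangle$ instead of a Taylor-type remainder.

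First, since $\mathcal{F}(t,u) = \int_Q f(t,x)u(x)\,\mathrm{d}x$ is linear in $u$, its differential is independent of $u$, so $\langle D\mathcal{F}(t,u_1) - D\mathcal{F}(t,u_2), u_1-u_2\rangle = 0$ and the loading term plays no role. Second, for the local contribution $\mathcal{E}^{loc}$, which is a quadratic form in $(u,z)$, the polarization identity gives
\begin{align*}
\langle D\mathcal{E}^{loc}(y_1) - D\mathcal{E}^{loc}(y_2), y_1-y_2 \rangle = 2\mathcal{E}^{loc}(u_1-u_2,z_1-z_2).
\end{align*}
Applying the eigenvalue bounds \eqref{eq_eigenvalues} for $\mathbf{A},\mathbf{H}$ and then the coercivity estimate \eqref{eq:ineq1} with $h=u_1-u_2$ and $w=z_1-z_2$ yields
\begin{align*}
\langle D\mathcal{E}^{loc}(y_1) - D\mathcal{E}^{loc}(y_2), y_1-y_2\rangle \geq 2\estconst\bigl(\|u_1-u_2\|_{H^1}^2 + \|z_1-z_2\|_{H^1}^2\bigr),
\end{align*}
where the leftover $-2\kappa\|\nabla(z_1-z_2)\|_{L^2}^2$ from \eqref{eq:ineq1} is absorbed by the gradient term $2\kappa\|\nabla(z_1-z_2)\|_{L^2}^2$ already present in $2\mathcal{E}^{loc}(u_1-u_2,z_1-z_2)$.

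Third, for the non-local part, write $w_i(x,y) = (u_i(x)-u_i(y))\cdot\frac{x-y}{|x-y|}$ and $K(x,y)=|x-y|^{-d-ps}$. A direct differentiation gives
\begin{align*}
\langle D\mathcal{E}^V(u_1) - D\mathcal{E}^V(u_2), u_1-u_2 \rangle
= p\int_Q\int_Q K(x,y)\bigl(|w_1|^{p-2}w_1 - |w_2|^{p-2}w_2\bigr)(w_1-w_2)\,\mathrm{d}x\,\mathrm{d}y.
\end{align*}
Applying the Lindqvist inequality \eqref{eq:lindqvist} to the pairs $(w_1,w_2)$ and $(w_2,w_1)$ and adding the two resulting inequalities yields the scalar strong-monotonicity bound
\begin{align*}
\bigl(|a|^{p-2}a - |b|^{p-2}b\bigr)(a-b) \geq \frac{2}{p(2^{p-1}-1)}\,|a-b|^p,\qquad p\geq 2,
\end{align*}
which, integrated against the kernel $K$, produces $\frac{2}{2^{p-1}-1}[u_1-u_2]_{\mathcal{X}_p^s}^p$. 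The fractional Korn inequality \eqref{eq:fractional Korn inequality} then converts this into a multiple of $[u_1-u_2]_{W^{s,p}}^p$.

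Adding the three estimates gives \eqref{eq:unif_convexity_est}. The only slightly non-obvious step is the derivation of the pointwise monotonicity of $a\mapsto|a|^{p-2}a$ from the one-sided Lindqvist inequality; once that symmetrization trick is carried out, the rest is a straightforward reuse of the tools already assembled for \Cref{lemma:beta}.
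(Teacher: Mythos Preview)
Your proof is correct and follows essentially the same route as the paper's: both split $\mathcal{E}$ into $\mathcal{E}^V$, $\mathcal{E}^{loc}$ and $\mathcal{F}$, observe that the linear loading term drops out, use the Lindqvist symmetrization trick (adding \eqref{eq:lindqvist} applied to $(w_1,w_2)$ and $(w_2,w_1)$) together with the fractional Korn inequality for the non-local part, and use the eigenvalue bounds plus \eqref{eq:ineq1} for the quadratic local part. Your use of the polarization identity for $\mathcal{E}^{loc}$ is a slightly cleaner bookkeeping of the constants than the paper's direct computation, but the arguments are otherwise identical.
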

\begin{proof}
With $w_i \coloneqq \left(u_i(x)-u_i(y)\right) \cdot \frac{x-y}{\Y{x-y}}$ and $K(x,y) \coloneqq \Y{x-y}^{-d-ps}$ we get
    \begin{align*}
        &\langle D \mathcal{E}^V(t,y_1(t)) -D \mathcal{E}^V(t,y_2(t)), y_1(t)-y_2(t) \rangle \\
        &= \bar{c} \int_Q \int_Q K(x,y) \left(\Y{w_1}^{p-2} w_1 - \Y{w_2}^{p-2}w_2 \right)\cdot \left( w_1 - w_2 \right) \dif x \dif y 
    \end{align*}
By \Cref{lemma:lindqvist} we have the inequalities
\begin{align}
    \Y{w_1}^p - \Y{w_2}^p - p\Y{w_2}^{p-2}w_2\cdot(w_1-w_2) &\geq \frac{\Y{w_1 -w_2}^p}{2^{p-1}-1} \label{eq:lindq_1}, \\
    \Y{w_2}^p - \Y{w_1}^p - p\Y{w_1}^{p-2}w_1\cdot(w_2-w_1) &\geq \frac{\Y{w_1 -w_2}^p}{2^{p-1}-1} \label{eq:lindq_2}.
\end{align}
Adding equations \eqref{eq:lindq_1} and \eqref{eq:lindq_2} yields
\begin{align*}
    \left( \Y{w_1}^{p-2} w_1 - \Y{w_2}^{p-2} w_2 \right) (w_1- w_2) \geq \frac{2}{p}\frac{\Y{w_1- w_2}^{p}}{2^{p-1}-1},
\end{align*}
and therefore
\begin{align*}
    \langle D \mathcal{E}^V(t,y_1(t)) -D \mathcal{E}^V(t,y_2(t)), y_1(t)-y_2(t) \rangle 
    &~\geq \frac{2}{p(2^{p-1}-1)} [u_1 -u_2]_{\mathcal{X}_p^s}^p \\
    &\overset{\eqref{eq:fractional Korn inequality}}{\geq} \frac{2C}{p(2^{p-1}-1)} [u_1 -u_2]_{W^{s,p}}^p.
\end{align*}
Furthermore, it is
\begin{align*}
    \langle D \mathcal{E}^{loc}(t,y_1(t)) -& D \mathcal{E}^{loc}(t,y_2(t)), y_1(t)-y_2(t) \rangle \\
    =~& \int_Q (e^\mathrm{el}_1 - e^\mathrm{el}_2) : \bold{A}(x) : (e^\mathrm{el}_1 - e^\mathrm{el}_2) \\
    &~~~~+ (z_1-z_2): \bold{H}(x):(z_1-z_2) + 2 \kappa \nabla(z_1 -z_2): \nabla(z_1-z_2) \dif x \\
    \overset{\eqref{eq_eigenvalues}}{\geq}& \lambda_{\bold{A}}^{min} \y{e^\mathrm{el}_1 - e^\mathrm{el}_2}_{L^2}^2 + \lambda_{\bold{H}}^{min} \y{z_1 -z_2}_{L^2}^2 + 2 \kappa \y{\nabla z_1 -\nabla z_2}_{L^2}^2 \\
    \overset{\eqref{eq:ineq1}}{\geq}& \estconst \left( \y{u_1 -u_2}^2_{H^1} + \y{z_1 -z_2}^2_{H^1}\right) +\kappa \y{\nabla z_1 - \nabla z_2}_{L^2}^2 \\
    \geq ~& \estconst \left( \y{u_1 -u_2}^2_{H^1} + \y{z_1 -z_2}^2_{H^1}\right).
\end{align*}
As a consequence, the claim
\begin{align*}
    \langle D \mathcal{E}(t,y_1(t)) &- D \mathcal{E}(t,y_2(t)), y_1(t)-y_2(t) \rangle  \\
    &\geq \min \left\lbrace \frac{2C}{p(2^{p-1}-1)}; \estconst \right\rbrace  \left(  [u_1 -u_2]_{W^{s,p}}^p +  \y{u_1 -u_2}^2_{H^1} + \y{z_1 -z_2}^2_{H^1} \right)  
\end{align*}
follows.
 \end{proof}

\section{Proofs of the main results}
\begin{proof}[Proof of \Cref{th:existence_of_solutions_continuum_system} (Existence of solutions to the continuum system)]
The idea is to apply Theorem \ref{th:existence of solutions}. Thus, it must be shown that $\Psi$ and $\mathcal{E}$ satisfy assumptions \eqref{assumptionI} -- \eqref{assumptionIV}.\\
\underline{\textbf{Assumption \eqref{assumptionI}:}} \\
By definition $\rho$ is positive, convex and positively homogeneous of degree $1$. As a consequence $\Psi (u,z) = \int \rho(z(x)) \dif x$ is 
$1$-homogeneous and satisfies conditions $(a)$ and $(b)$ in \eqref{assumptionI}. Lower semicontinuity of $\Psi$ follows by the convexity of $\rho$ and application of Fatou's lemma.\\
\\
\underline{\textbf{Assumption \eqref{assumptionII}} (\textit{Compactness of sublevels}):} \\
It needs to be shown that for any $\eta \in [0, \infty )$ the sublevel set 
\begin{align*}
    L_\eta^- \left( \mathcal{E}(t, \cdot ) \right) \coloneqq \left\lbrace y \in Y ~ \vert \mathcal{E}(t,y) \leq \eta \right\rbrace
\end{align*}
is compact. Therefore, take an arbitrary $\eta \in [0,\infty)$. 
Let $(y_i)_{i \in \mathbb{N}} = (u_i, z_i)_{i \in \mathbb{N}} $ be an arbitrary sequence in $ L_\eta^- \left( \mathcal{E}(t, \cdot ) \right) $. Then by \Cref{lemma:boundedness of norms} it follows that for every $i \in \mathbb{N}$
\begin{align} 
[u_i]_{W^{s,p}}, \y{ u_i }_{H^1},  \y{ z_i}_{H^1} < \infty.
\end{align}
By the Rellich--Kondrachov embedding theorem there exist subsequences $u_{i_k}$ and $z_{i_k}$ and limits $u \in L^p(Q)^d$ (and in particular in $L^2(Q) ^d$) and $ z \in L^2(Q)^{d\times d}$ such that $u_{i_k} \rightarrow u$ strongly in $L^p(Q)^d$ and $z_{i_k} \rightarrow z$ strongly in $L^2(Q)^{d\times d}$. It remains to show that the limit $y=(u,z)$ lies in the sublevelset $ L_\eta^- \left( \mathcal{E}(t, \cdot ) \right) $. Using that $\mathcal{E}$ is lower semi-continuous gives
\begin{align}\label{eq:closedness_of_sublevels}
    \mathcal{E} (t,y) \leq \liminf_{k \rightarrow \infty} \mathcal{E}(t, y_{i_k}) \leq \eta,
\end{align}
where the last inequality follows from the fact, that all members of the sequence $(y_{i_k})_k$ lie in the sublevelset $ L_\eta^- \left( \mathcal{E}(t, \cdot ) \right) $. Equation \eqref{eq:closedness_of_sublevels} says that $ y \in L_\eta^- \left( \mathcal{E}(t, \cdot ) \right)$ and hence it has been verified, that the sublevelsets of $\mathcal{E}(t,\cdot)$ are compact.\\
\\
\underline{\textbf{Assumption \eqref{assumptionIII}} (\textit{Energetic control of power}):} \\
Since $f \in C_{\text{Lip}}(I, L^2(Q)^d)$, the function $t \mapsto f(t,\cdot)$  is Lipschitz continuous from $I$ to $L^2(Q)^d$ and hence differentiable almost everywhere by Rademacher's theorem. This ensures that $\dot{f}(t,\cdot) := \partial_t f(t,\cdot)$ exists for a.e. $t \in I$ with $\y{\dot{f}(t, \cdot)}_{L^2(Q)^d} \leq C_{\text{Lip}}$ for some Lipschitz constant $C_{\text{Lip}} > 0$.
Consequently, 
$\partial_t \mathcal{E}(t,y) = - \int_Q \dot{f}(t,x) u(x) \dif x $ exists for all $ y \in \text{Dom} ~\mathcal{E}(0, \cdot ) $ 
and we can estimate 
\begin{align*}
    \Y{\partial_t \mathcal{E}(t,y) } &~= \Y{ \int_Q \dot{f}(t,x) u(x) \dif x } \leq \y{\dot{f}(t,\cdot)}_{L^2(Q)^d} \y{u}_{L^2(Q)^d} \leq C_{Lip} \y{u}_{L^2(Q)^d} \\
    &~= \under{\eqqcolon ~ c_\mathcal{E} ~< ~\infty}{(C_{Lip} + C_f) \y{u}_{L^2(Q)^d}} - C_f \y{u}_{L^2(Q)^d} \\
    &~\leq c_\mathcal{E} - C_f \y{u}_{L^2(Q)^d} +  \bar{c} \gagl[u] +\estconst \left( \y{ u }_{H^1(Q)^d}^2 + \y{ z}_{H^1(Q)^d}^2 \right) \overset{\eqref{eq:lower_estimate_energy}}{\leq}  c_\mathcal{E} + \mathcal{E}(t,y).
\end{align*}
\underline{\textbf{Assumption \eqref{assumptionIV}} (\textit{Conditions on convergent stable sequences}):} \\
Let $(t_n , y_n)_{n \in \mathbb{N}}$ be a stable sequence with $(t_n , y_n) \xlongrightarrow{I \times Y} (t,y) $. Then $u_n \rightarrow u$ in $L^2(Q)^d$ 
and by assumption $f \in C_{\text{Lip}}(I, L^2(Q)^d)$, which implies $\dot{f}(t, \cdot) \in L^2(Q) ^d$ for a.e. $t \in I$. This yields

\begin{align*}
    \partial_t \mathcal{E}(t,y_n) = - \int_Q \dot{f}(t,x) u_n(x) \dif x \xlongrightarrow{n\rightarrow \infty} - \int_Q \dot{f}(t,x) u(x) \dif x = \partial_t \mathcal{E}(t,y).
\end{align*}
It remains to show that $y \in \mathcal{S}(t)$. Note that
$(t_n , y_n) = (t_n, u_n, z_n) \xlongrightarrow{I \times Y} (t, u, z) = (t,y) $ means that 
$u_n \rightarrow u$ in $H^{1}(Q)^d$ 
and that $z_n \rightarrow z $ in $H^1(Q)^{d \times d} $. Since $\mathcal{E}^{loc}(u_{n}, z_{n}) $ consists of sums of strongly converging products this immediately implies that $\mathcal{E}^{loc}(y_n)  \xlongrightarrow{n\rightarrow \infty} \mathcal{E}^{loc}(y)$. 
Due to the Rellich--Kondrachov embedding theorem the convergence of $u_n$ to $u$ is valid in the strong $L^p(Q)^d$ topology up to subsequences and consequently
$\mathcal{E}^{V}(y_n) 
\xlongrightarrow{n\rightarrow \infty} \mathcal{E}^{V}(y)$ up to subsequences by 
\Cref{m:l:konvJeps}.
 By assumption $f \in C^{Lip}(I, L^2(Q))^d$ and $t_n \xlongrightarrow{n\rightarrow \infty} t $ from which it follows that $f(t_n) \xlongrightarrow{n\rightarrow \infty} f(t)$ in $L^2(Q)^d$. Together with the strong $L^2(Q)^d$ convergence of $u_n$ to $u$ it can be deduced that $\mathcal{F}(t_n, u_n) \xlongrightarrow{n\rightarrow \infty} \mathcal{F}(t,u)$. Due to the theorem of Bernstein-Doetsch (see for example \cite{Kuczma2009} Theorem 6.4.2.) $\rho $ is continuous. Moreover, strong $H^{1}(Q)^{d \times d}$ convergence of $z_n$ implies pointwise convergence of $z_n$ almost everywhere up to subsequences.
 For any testfunction $\tilde{y}=(\tilde{u},\tilde{z}) \in Y$ we therefore have $\rho (\tilde{z}-z_n) \xlongrightarrow{n\rightarrow \infty} \rho (\tilde{z}-z_n) $ pointwise almost everywhere. 
By assumption, $\rho$ is convex and therefore it is Lipschitz on $Q$ (see \Cref{lemma:convex_function_Lipschitz}). As a consequence $\rho(\tilde{z}(x) - z_n (x)) \leq L \Y{\tilde{z}(x) - z_n (x)} \in L^1(Q)$ and by the dominated convergence theorem it can be concluded that $\Psi (\tilde{y}- y_n)  \xlongrightarrow{n\rightarrow \infty} \Psi(\tilde{y} - y)$.

All the aforementioned convergences can be combined to
\begin{align*}
    0 \leq \lim_{n \rightarrow \infty} \mathcal{E}(t_n, \tilde{y} ) + \Psi (\tilde{y}- y_n) - \mathcal{E}(t_n,y_n) = \mathcal{E}(t,\tilde{y}) + \Psi(\tilde{y} - y) - \mathcal{E}(t,y).
\end{align*}
Rearranging this inequality results in $\mathcal{E}(t,y) \leq \mathcal{E}(t,\tilde{y}) + \Psi(\tilde{y} - y)  $ and it can be concluded that $ y \in \mathcal{S}(t)$.
\end{proof}

The subsequent proof for the uniqueness of solutions is based on the methods of Proposition 4.1 in \cite{MIELKE2} and Section 3.4.4 in \cite{mielkeRoubicek}.
\begin{proof}[Proof of \Cref{lemma:uniqueness_of_solutions} (Uniqueness of solutions)]
Let $y_1$ and $y_2$ be energetic solutions with $y_1(0) = y_2(0)=y_0$.
We define the function
\begin{align*}
    \gamma (t) := \langle D \mathcal{E}(t,y_1(t)) -D \mathcal{E}(t,y_2(t)), y_1(t)-y_2(t) \rangle.
\end{align*}
Note that the linear term $\mathcal{F}$ is canceled out here and $\gamma$ can be written as 
\begin{align*}
    \gamma (t)=\, &\langle D \mathcal{E}^V(t,y_1(t)) -D \mathcal{E}^V(t,y_2(t)), y_1(t)-y_2(t) \rangle \\
    &+ \langle D \mathcal{E}^{loc}(t,y_1(t)) -D \mathcal{E}^{loc}(t,y_2(t)), y_1(t)-y_2(t) \rangle.
\end{align*}
With this in mind, the time derivative of $\gamma$ can be estimated as
\begin{align*}
    \dot{\gamma}(t) =~& \langle D \mathcal{E}(y_1) -D\mathcal{E}(y_2), \dot{y}_1 -\dot{y}_2 \rangle + \langle D^2 \mathcal{E}(y_1) \dot{y}_1 -D^2 \mathcal{E}(y_2) \dot{y}_2, y_1 -y_2 \rangle \\
    \overset{\eqref{eq:mon1}}{\leq}&  \langle D^2 \mathcal{E}(y_1) \dot{y}_1 -D^2 \mathcal{E}(y_2) \dot{y}_2, y_1 -y_2 \rangle - \langle D \mathcal{E}(y_1) -D\mathcal{E}(y_2), \dot{y}_1 -\dot{y}_2 \rangle \\
    =~& \langle D\mathcal{E}(t,y_2) - D\mathcal{E}(t,y_1) - D^2\mathcal{E}(t,y_1)(y_2-y_1), \dot{y}_1  \rangle \\
    &+ \langle D\mathcal{E}(t,y_1) - D\mathcal{E}(t,y_2) - D^2\mathcal{E}(t,y_2)(y_1-y_2), \dot{y}_2 \rangle .
\end{align*}
Here it was used that $\langle D^2 \mathcal{E}(y_i) \dot{y}_i, y_j \rangle = \langle D^2 \mathcal{E}(y_i)  y_j, \dot{y}_i\rangle$ for $i,j \in \lbrace1,2\rbrace$. 
With $w \coloneqq u_1 - u_2$ and $v_\theta \coloneqq u_2 + \theta w$ we get by Taylor's theorem with integral remainder (or by application of the fundamental theorem of calculus twice)
\begin{align}\label{eq:lipsch_cond_sol}
     D\mathcal{E}(t,y_1) - D\mathcal{E}(t,y_2) - D^2\mathcal{E}(t,y_2)(y_1-y_2) = \int_0^1 (1-\theta)  D^3 \mathcal{E}(t,v_\theta)[w, w]\dif \theta
\end{align}
and thus
\begin{align}\label{eq:xi_ydot_est}
&    \langle D\mathcal{E}(t,y_1) - D\mathcal{E}(t,y_2) - D^2\mathcal{E}(t,y_2)(y_1-y_2), \dot{y}_2 \rangle \nonumber \\
    &\leq \int_0^1 (1-\theta)  \Y{\langle  D^3 \mathcal{E}(t,u_2+ \theta w)[w, w], \dot{u}_2\rangle} \dif \theta \nonumber\\
    &\leq C(p) \int_0^1 (1-\theta) \int_Q \int_Q \resizebox{0.6\textwidth}{!}{$\displaystyle K(x,y) \Y{v_\theta(x)-v_\theta(y)}^{p-3} \Y{\dot{u}_2(x) - \dot{u}_2(y)} \Y{w(x)-w(y)}^2 $} \dif x \dif y \dif \theta  \nonumber\\
    &\leq C(p) [\dot{u}_2]_{W^{s,p}}  [w]_{W^{s,p}}^2  \int _0^1 (1-\theta) [v_\theta]_{W^{s,p}}^{p-3}  \dif \theta,
\end{align}
where in the last step Hölders inequality was applied with $q_1=\frac{p}{p-3}$,  $q_2=p$, $q_3=\frac{p}{2}$.
Inserting $v_\theta = u_2 + \theta (u_1 - u_2)$ we can further estimate
\begin{align}\label{eq:int_theta}
    \int _0^1 (1-\theta) [v_\theta]_{W^{s,p}}^{p-3}  \dif \theta &\leq \int _0^1  \left([u_2]_{W^{s,p}} + \theta [u_1 - u_2]_{W^{s,p}}\right)^{p-3}  \dif \theta \nonumber\\
    &\leq \int _0^1  \left(2[u_2]_{W^{s,p}} +  [u_1]_{W^{s,p}}\right)^{p-3}  \dif \theta  \leq C .
\end{align}
As a consequence of \eqref{eq:lipschitz_estimate} we have 
\begin{align}\label{eq:der_dotu}
    [\dot{u}_i]_{W^{s,p}} \leq C. 
\end{align}
Combinig \eqref{eq:der_dotu} and \eqref{eq:int_theta} with \eqref{eq:xi_ydot_est} it can be deduced that
\begin{align*}
    \langle D\mathcal{E}(t,y_1) - D\mathcal{E}(t,y_2) - D^2\mathcal{E}(t,y_2)(y_1-y_2), \dot{y}_2 \rangle 
    \leq C(p) [u_1-u_2]_{W^{s,p}}^2 .
\end{align*}
Repeating the previous steps we get 
\begin{align*}
\langle D\mathcal{E}(t,y_2) - D\mathcal{E}(t,y_1) - D^2\mathcal{E}(t,y_1)(y_2-y_1), \dot{y}_1  \rangle \leq C(p) [u_1-u_2]_{W^{s,p}}^2
\end{align*}
and hence
\begin{align}
    \dot{\gamma}(t) \leq C  [u_1-u_2]_{W^{s,p}}^2 
    \overset{\eqref{eq:unif_convexity_est}}{\leq} C \gamma(t)^\frac{2}{p}.
\end{align}
Note that $\gamma(0) =0$ and therefore integration of the previous inequality results in
\begin{align*}
    \gamma(t) \leq C \int_0^t \gamma(s)^\frac{2}{p} \dif s.
\end{align*}
Then \Cref{lemma:LaSalle} is applicable and we get $\gamma(t) \leq 0$ and thus
\begin{align*}
    0 \leq C  \left(  [u_1 -u_2]_{W^{s,p}}^p +  \y{u_1 -u_2}^2_{H^1} + \y{z_1 -z_2}^2_{H^1} \right)  
    \overset{\eqref{eq:unif_convexity_est}}{\leq} \gamma(t) \leq 0,
\end{align*}
from which we are finally able to deduce that $\y{y_1 - y_2}_Y =0$ and therefore $y_1=y_2$.
\end{proof}

Next we present the proof of \Cref{thm:rate}, which is essentially the proof of Theorem 4.3 in \cite{neukamm2018stochastic} and of Theorem 4.3 in \cite{Mielke_twoScaleHomogenization} with minor changes due to the additional nonlocal term in the energy. 
\begin{proof}[Proof of Theorem \ref{thm:rate} (Convergence of solutions)]
Note that the existence of solutions $y_\varepsilon = (u_\varepsilon, z_\varepsilon)$ of \eqref{stability} and \eqref{energyBalance} associated with $\mathcal{E}_\varepsilon$ and $\Psi_\varepsilon$ follows by Theorem \ref{th:existence of discr solutions}, whereas existence and uniqueness of solutions to the continuum problem follow by \Cref{th:existence_of_solutions_continuum_system} and \Cref{lemma:uniqueness_of_solutions}.

We assume that $\mathcal{E} (t, y) < \infty$ for a solution $y=(u,z)$ of \eqref{stability} and \eqref{energyBalance} associated to $\mathcal{E}$ and $\Psi$ with $u (0)= u_0$ and $z(0) = z_0 $. We further assume that 
\begin{align}\label{as:energy_finite}
\sup_{\varepsilon} \mathcal{E}_\varepsilon (t, y_\varepsilon) < \infty  \qquad  \forall t \in I,
\end{align}
where $y_\varepsilon = (u_\varepsilon ,z_\varepsilon)$ are solutions to \eqref{stability} and \eqref{energyBalance} associated to $\mathcal{E}_\varepsilon$ and $\Psi_\varepsilon$ with $u_\varepsilon (0)= u_\varepsilon^0$ and $z_\varepsilon (0) = z_\varepsilon^0 $
(otherwise the statement is trivial).\\
Rewriting the discrete energy $\mathcal{E}_\varepsilon (y_\varepsilon)$ in integral form as in \eqref{eq:integral_representation_discrete_energy} and applying \Cref{lemma:boundedness of norms} we get 
\begin{align*}
    \sup_{\varepsilon} \, \y{\Repsu}_{L^2(Q)^d} < \infty,&\qquad \sup_{\varepsilon} \,\y{\Reps \nabla_\varepsilon u_\varepsilon}_{L^2(Q)^{d\times d } } < \infty, \\
    &~\text{and} \\
     \sup_{\varepsilon} \,\y{\Reps z_\varepsilon}_{L^2(Q)^{d \times d}} < \infty ,&  \qquad \sup_{\varepsilon} \,\y{\Reps \nabla_\varepsilon z_\varepsilon}_{L^2(Q)^{d \times d \times d}} < \infty.
\end{align*}
As a consequence, there exist subsequences (not relabeled) and weak limits $u \in L^2(Q) ^d $, $z \in L^2(Q)^{d \times d}$ such that
\begin{align}\label{eq:conv_properties1}
    \mathcal{R}^\ast_{\varepsilon} u_{\varepsilon} \xrightharpoonup{L^2(Q)^{d}} u ,& \qquad \mathcal{R}^\ast_{\varepsilon} \nabla_{\varepsilon} u_{\varepsilon} \xrightharpoonup{L^2(Q)^{d\times d}} \nabla u \qquad ~~\text{weakly  }\\
    &~\text{and} \nonumber \\
    \mathcal{R}^\ast_{\varepsilon} z_{\varepsilon} \xrightharpoonup{L^2(Q)^{d\times d}} z ,& \qquad \mathcal{R}^\ast_{\varepsilon} \nabla_{\varepsilon} z_{\varepsilon} \xrightharpoonup{L^2(Q)^{d\times d \times d}} \nabla z \qquad\text{weakly  }. \label{eq:conv_properties2}
\end{align}
Now we have to show that $(u(t),z(t)) \in \mathcal{S}(t)$ for any $t \in I$. \\
For any $\tilde{y}=(\tilde{u}, \tilde{z}) \in H^1_0(Q)^d \times H^1(Q)^{d \times d}$, we define the sequences $$v_\varepsilon \coloneqq \mathcal{R}_\varepsilon (\tilde{u} - u), ~w_\varepsilon \coloneqq \mathcal{R}_\varepsilon (\tilde{z}-z), ~
\tilde{u}_\varepsilon \coloneqq u_\varepsilon + v_\varepsilon ~ \text{and} ~\tilde{z}_\varepsilon \coloneqq z_\varepsilon + w_\varepsilon ,$$ which obviously converge in the following sense:
\begin{align} \label{eq:conv_properties}
\begin{array}{lll}
\Reps v_\varepsilon \rightarrow \tilde{u}-u ~\text{in } L^2(Q)^d,&  \Reps \nabla_\varepsilon v_\varepsilon \rightarrow \nabla( \tilde{u}-u) ~\text{in } L^2(Q)^{d \times d}  &\text{strongly}, \\
& & \\
\Reps w_\varepsilon \rightarrow \tilde{z} - z ~\text{in } L^2(Q)^{d \times d} ,\qquad&  \Reps \nabla_\varepsilon  w_\varepsilon \rightarrow \nabla ( \tilde{z} - z )~\text{in } L^2(Q)^{d \times d \times d} \qquad & \text{strongly},\\
& & \\
\Reps \tilde{u}_\varepsilon \rightharpoonup \tilde{u} ~\text{in } L^2(Q)^d, &  \Reps \nabla_\varepsilon \tilde{u}_\varepsilon \rightharpoonup \nabla \tilde{u} ~\text{in } L^2(Q)^{d \times d} & \text{weakly},\\
& & \\
\Reps \tilde{z}_\varepsilon \rightharpoonup \tilde{z} ~\text{in } L^2(Q)^{d \times d},&  \Reps \nabla_\varepsilon \tilde{z}_\varepsilon \rightharpoonup \nabla  \tilde{z} ~\text{in } L^2(Q)^{d \times d \times d} & \text{weakly}.
\end{array}
\end{align}
We want to show, that these sequences satisfy
\begin{align*}
\lim_{\varepsilon \rightarrow 0}\left( \mathcal{E}_\varepsilon (t, \tilde{y}_\varepsilon ) + \Psi_\varepsilon (\tilde{y}_\varepsilon - y_\varepsilon) - \mathcal{E}_\varepsilon (t, y_\varepsilon) \right) = \mathcal{E} (t,\tilde{y}) + \Psi (\tilde{y}-y) - \mathcal{E}(t,y) \qquad \text{a.s.}
\end{align*}
This would yield $\mathcal{E}(t,y) \leq \mathcal{E} (t,\tilde{y}) + \Psi (\tilde{y}-y)$, and therefore $y(t)=(u(t),z(t)) \in \mathcal{S}(t)$.
First we consider
\begin{align*}
\mathcal{E}_\varepsilon (t, \tilde{y}_\varepsilon) - \mathcal{E}_\varepsilon (t,y_\varepsilon) =& \mathcal{E}_\varepsilon^{loc}(\tilde{y}_\varepsilon) - \mathcal{E}^{loc}_\varepsilon (y_\varepsilon) + \mathcal{E}_\varepsilon^{V}( \tilde{u}_\varepsilon) - \mathcal{E}^{V}_\varepsilon (u_\varepsilon) \\
 &- \varepsilon^d \sum_{x\in Q_\varepsilon} f_\varepsilon(t,x)(\tilde{u}_\varepsilon - u_\varepsilon)(x)  .
\end{align*}
Using the identities $(a:\bold{A}:a) -(b:\bold{A}:b) = (a-b) :\bold{A} :(a+b)$ and $\Y{a}^2 - \Y{b}^2 = (a-b) : (a+b)$,
the first two terms can be written as
\begin{align*}
&\mathcal{E}_\varepsilon^{loc}(\tilde{y}_\varepsilon) - \mathcal{E}^{loc}_\varepsilon (y_\varepsilon) \\
& = 
 \varepsilon^{d} \sum_{x\in \Z_\varepsilon^d} \left( \left(\nabla ^s_\varepsilon \tilde{u}_\varepsilon-  \nabla ^s_\varepsilon u_\varepsilon\right)
 - \left(\tilde{z}_\varepsilon- z_\varepsilon\right) \right): \bold{A}:  \left( \left(\nabla ^s_\varepsilon \tilde{u}_\varepsilon+  \nabla ^s_\varepsilon u_\varepsilon\right)
 - \left(\tilde{z}_\varepsilon + z_\varepsilon\right) \right)  \\
  &\qquad+ \left(\tilde{z}_\varepsilon - z_\varepsilon\right) : \bold{H} : \left(\tilde{z}_\varepsilon + z_\varepsilon\right)   
  + \kappa \left( \nabla_\varepsilon \tilde{z}_\varepsilon  - \nabla_\varepsilon z_\varepsilon \right): \left( \nabla_\varepsilon \tilde{z}_\varepsilon  + \nabla_\varepsilon z_\varepsilon \right)\\
  &=  \varepsilon^{d} \sum_{x\in \Z_\varepsilon^d} \left( \nabla ^s_\varepsilon v_\varepsilon 
 -  w_\varepsilon \right): \bold{A}:  \left( \left(\nabla ^s_\varepsilon \tilde{u}_\varepsilon+  \nabla ^s_\varepsilon u_\varepsilon\right)
 - \left(\tilde{z}_\varepsilon(x) + z_\varepsilon \right) \right) \\
  &\qquad+ w_\varepsilon   : \bold{H} : \left(\tilde{z}_\varepsilon + z_\varepsilon \right) + \kappa ~ \nabla_\varepsilon w_\varepsilon : \left( \nabla_\varepsilon \tilde{z}_\varepsilon + \nabla_\varepsilon z_\varepsilon \right) .
\end{align*}
Here the dependence of $u_\varepsilon, \tilde{u}_\varepsilon, z_\varepsilon, \tilde{z}_\varepsilon, v_\varepsilon, w_\varepsilon, \bold{A, \bold{H}}$ on $x$ has been omitted for better readability.
By \eqref{eq:conv_properties1} and \eqref{eq:conv_properties}, the above expression involves sums of products of strongly and weakly convergent sequences and hence its limit can be determined as
\begin{align*}
\lim&_{\varepsilon \rightarrow 0 } \mathcal{E}_\varepsilon^{loc}(\tilde{y}_\varepsilon) - \mathcal{E}^{loc}_\varepsilon (y_\varepsilon) \\
=& \int_Q \left(\left(\nabla ^s  \tilde{u}- \nabla ^s u\right) -\left( \tilde{z}-z\right) \right) : \bold{A}(x) : \left(\left(\nabla ^s \tilde{u} + \nabla ^s u \right) - \left(\tilde{z}+ z \right) \right) \\
&+ \left(\tilde{z}-z \right) : \bold{H}(x) : \left( \tilde{z}+ z\right) + \kappa \left(\nabla \tilde{z}- \nabla z \right) : \left( \nabla \tilde{z} + \nabla z \right) \dif x \\
=& ~\mathcal{E}^{loc}(\tilde{y}) - \mathcal{E}^{loc} (y).
\end{align*}
To show convergence of $\mathcal{E}_\varepsilon^{V}( \tilde{u}_\varepsilon) - \mathcal{E}^{V}_\varepsilon (u_\varepsilon) $, we argue as follows. Due to the assumption made in \eqref{as:energy_finite}, we get that 
\begin{align*}
\sup_{\varepsilon} \mathcal{E}^{loc}_\varepsilon (y_\varepsilon) < \infty .
\end{align*}
Combining this with \Cref{th:Korn_plasticity} (Korn's inequality), it follows that
\begin{align*}
\varepsilon^d \sum_{x \in Q_\varepsilon} 
\Y{\nabla_\varepsilon u_\varepsilon (x) }^2
< \infty .
\end{align*}
This allows application of Theorem \ref{thm:Poincare}, which says that $\Repsu \rightarrow u$ strongly in $L^p(Q)^d$ (and in particular in $L^2(Q)^d$) up to subsequences. Then Theorem \ref{m:l:konvJeps} yields 
\begin{align*}
    \mathcal{E}_{\varepsilon}^V(\tilde{u}_\varepsilon)-\mathcal{E}_\varepsilon^V(u_\varepsilon ) \asarrow \mathcal{E}^V(\tilde{u})-\mathcal{E}^V(u).
\end{align*}
The last term obviously converges to $- \int_Q f(t,x) (\tilde{u}-u)(x) \dif x$. 
Putting the previous arguments together we get 
$$\lim_{\varepsilon \rightarrow 0}\left( \mathcal{E}_\varepsilon(t,\tilde{y}_\varepsilon) - \mathcal{E}_\varepsilon (t,y_\varepsilon) \right) = \mathcal{E}(t,\tilde{y}) - \mathcal{E}(t,y)\qquad \text{a.s.}$$
It remains to show that $\lim_{\varepsilon \rightarrow 0} \Psi_\varepsilon (\tilde{y}_\varepsilon - y_\varepsilon) = \Psi (\tilde{y}-y)$.
We have
\begin{align*}
\Psi_\varepsilon (\tilde{y}_\varepsilon-y_\varepsilon) &= \Psi_\varepsilon (w_\varepsilon ) = \varepsilon^d \sum_{x\in \Z_\varepsilon^d} \rho \left(\mathcal{R}_\varepsilon (\tilde{z}-z) (x)\right) \overset{\text{Jensen}}{\leq} \varepsilon^d \sum_{x\in \Z_\varepsilon^d} \mathcal{R}_\varepsilon \rho (\tilde{z}-z) (x)\\
 &= \int_{\R^d}  \Reps  \mathcal{R}_\varepsilon \rho (\tilde{z}-z) (x)  \dif x \xlongrightarrow{\varepsilon \rightarrow 0} \int_Q \rho (\tilde{z}-z) (x)  \dif x =\Psi (\tilde{z}-z).
\end{align*}
The other inequality can be obtained by 
\begin{align*}
\liminf_{\varepsilon \rightarrow 0}& \Psi_\varepsilon (\tilde{y}_\varepsilon-y_\varepsilon) = \liminf_{\varepsilon \rightarrow 0} \Psi_\varepsilon (w_\varepsilon ) = \liminf_{\varepsilon \rightarrow 0} \varepsilon^d \sum_{x\in \Z_\varepsilon^d} \rho \left(\mathcal{R}_\varepsilon (\tilde{z}-z) (x)\right)  \\
& =\liminf_{\varepsilon \rightarrow 0} \int_{\R^d} \rho \left(\Reps \mathcal{R}_\varepsilon (\tilde{z}-z) (x)\right)
\overset{\text{Fatou}}{\geq} \int_{\R^d} \liminf_{\varepsilon \rightarrow 0} \rho \left(\Reps \mathcal{R}_\varepsilon (\tilde{z}-z) (x)\right) \\ 
&= \int_Q \rho (\tilde{z}-z) (x)  \dif x =\Psi (\tilde{z}-z).
\end{align*}
With this we finally obtain $y(t)=(u(t),z(t)) \in \mathcal{S}(t)$.
\\

In the next step we want to show that in the limit $\varepsilon \rightarrow 0$ the energy balance equation \eqref{energyBalance} holds, that reads
\begin{align}\label{eq:energy balance}
\mathcal{E}(t,y(t)) + \int_0^t \Psi(\dot{y}(s) ) \dif s = \mathcal{E}(0,y(0))  - \int_0^t \int_Q \dot{f}(s)\cdot u(s) \dif x \dif s,
\end{align}
where $y(t)=(u(t),z(t))$.
The energy balance equation \eqref{energyBalance} associated to $\mathcal{E}_\varepsilon$ and $\Psi_\varepsilon$ is given by 
\begin{align}\label{eq:rate:E}
&\mathcal{E}_\varepsilon(t,y_\varepsilon(t)) + \int_0^t \Psi_\varepsilon(\dot{y}_\varepsilon(s) ) \dif s \\ \nonumber
& =  \int_Q \int_Q \Reps \coeff \frac{\Y{\left(\Repsu^0 (x) - \Repsu^0 (y)\right)\cdot\tfrac{\Reps x-\Reps y}{\Y{\Reps x- \Reps y}}}^p}{\Y{\Reps x- \Reps y}^{d+ps}}  \dif x \dif y \\ \nonumber
&\qquad+ \int_Q \left( \nabla^s_\varepsilon \Repsu^0 (x) - \Reps z_\varepsilon^0 (x) \right) : \bold{A}(x) : \left( \nabla^s_\varepsilon \Repsu^0 (x) - \Reps z_\varepsilon^0 (x) \right) \\ \nonumber
&\qquad ~~~~~~+ \Reps z_\varepsilon^0 (x) : \bold{H}(x) : \Reps z_\varepsilon^0 (x) + \kappa \Y{\nabla_\varepsilon \Reps z_\varepsilon^0 (x)}^2 \dif x  \\
&\qquad- \int_Q \Reps f_\varepsilon (0,x) \Repsu^0  \dif x   - \int_0^t  \varepsilon^d \sum_{x\in \Z^d_\varepsilon} \dot{f}_\varepsilon(s,x) u_\varepsilon(s,x) \dif s . \nonumber
\end{align}
By the strong convergences in \eqref{eq:thm:rate:conv1} we get, that the right-hand side of the above equation except of the last term converges to 
\begin{align}\label{eq:lim_e0}
\mathcal{E}(0,y(0))&= \bar{c} 
\int_Q \int_Q \frac{\Y{u^0(x)-u^0(y)\cdot \tfrac{x-y}{\Y{x-y}}}^p}{\Y{x-y}^{d+ps}} \dif x \dif y \\ \nonumber
&\qquad + \int_Q \left(\nabla ^s u^0(x) - z^0(x) \right) : \bold{A}(x) : \left(\nabla ^s u^0(x) - z^0(x) \right) \\
&\qquad+ z^0(x) : \bold{H}(x) : z^0(x) + \kappa \Y{\nabla z^0(x)}^2 \dif x  \nonumber \\ \nonumber
& \qquad +\int_{Q} f(0,x) u^0(x) \dif x .
\end{align}
The last term on the right-hand side of \eqref{eq:rate:E} can be written as
\begin{align}
\label{eq:term_right}
\int_0^t  \int_Q \Reps\dot{f}_\varepsilon(s,x) ~\Reps u_\varepsilon(s,x)  \dif x \dif s
\end{align}
and due to the uniform boundedness of $\Reps\dot{f}_\varepsilon(s,x) ~\Reps u_\varepsilon(s,x) $ on $Q$, the dominated convergence theorem is applicable and yields convergence of \eqref{eq:term_right} to 
\begin{align}\label{eq:lim_f}
\int_0^t \int_Q \dot{f}(s,x) u(s,x) \dif x \dif s.
\end{align} 
 Now we consider the left-hand side of the equation \eqref{eq:rate:E}.
Being a convex, strongly continuous functional, $\mathcal{E}^{loc}$ is weakly lower semicontinuous and thus we get the $\liminf$-inequality for the local term of the energy. 
As mentioned before, the convergence $\Repsu \rightarrow u$ holds strongly in $L^p(Q)^d$ up to subsequences.
Thus, we get the $\liminf$-inequality for the non-local term (up to subsequences) by \Cref{m:l:konvJeps}, which leads to the $\liminf$-inequality for the whole energy:
\begin{align} \label{eq:liminf_nonloc}
 \mathcal{E}(t,y(t)) \leq \liminf_{\varepsilon\rightarrow 0} \mathcal{E}_{\varepsilon}(t,y_{\varepsilon}(t)) .
\end{align}
To prove the $\liminf$-inequality for $\int_0^t \Psi_\varepsilon(\dot{y}_\varepsilon(t) ) \dif t$ we choose an arbitrary partition $\lbrace t_i\rbrace$ of the interval $I$, where $0=t_1 < t_2< ... < t_N=T$ . For any sequence $\varepsilon \rightarrow 0$ (in particular for any (sub-)sub-sequences) it holds
$$ \sum_{i=2}^N \Psi(y(t_i)-y(t_{i-1})) \leq \liminf_{\varepsilon \rightarrow 0} \sum_{i=2}^N \Psi_\varepsilon(y_\varepsilon(t_i)-y_\varepsilon(t_{i-1})) .$$
From this inequality we get by the homogeneity of $\Psi$ and $\Psi_\varepsilon$:
$$ \sum_{i=2}^N \Y{t_i-t_{i-1}} \Psi\left(\frac{y(t_i)-y(t_{i-1})}{\Y{t_i-t_{i-1}} }\right) \leq \liminf_{\varepsilon \rightarrow 0} \sum_{i=2}^N \Y{t_i-t_{i-1}} \Psi_\varepsilon\left(\frac{y_\varepsilon(t_i)-y_\varepsilon(t_{i-1})}{\Y{t_i-t_{i-1}}}\right). $$
Because this inequality holds for any partition $\lbrace t_i\rbrace$ of $I$, taking the supremum over all partitions of $I$, this turns into
\begin{align}\label{eq:liminf Psi}
\int_0^t \Psi (\dot{y}(s)) \dif s \leq \liminf_{\varepsilon \rightarrow 0} \int_0^t \Psi_\varepsilon(\dot{y}_\varepsilon(s))\dif s.
\end{align}
Combining the previous results with \eqref{eq:rate:E}, \eqref{eq:lim_e0} and \eqref{eq:lim_f} we get:
\begin{align*}
\mathcal{E}(t,y(t)) + \int_0^t \Psi(\dot{y}(s) ) \dif s &\leq
\liminf_{\varepsilon \rightarrow 0} \mathcal{E}_\varepsilon (t,y(t)) + \Psi_\varepsilon(\dot{y}_\varepsilon(s))\dif s \\
&=\mathcal{E}(0,y(0)) - \int_0^t \int_Q \dot{f}(s)  u(s) \dif x \dif s.
\end{align*}
To prove the other inequality, we proceed as in Section 2.3.1 of \cite{MIELKE2} and we use that $y$ satisfies the stability condition
\begin{align*}\mathcal{E}(t,y) \leq \mathcal{E} (t,\tilde{y}) + \Psi (\tilde{y}-y) \qquad \forall \tilde{y}=(\tilde{u}, \tilde{z}) \in Y 
.
\end{align*}
We take a sequence of partitions $\lbrace t_i^N\rbrace$ of the interval $I$, where $0=t_1^N < t_2^N< ... < t_N^N=t$ such that the fineness $\sup_{i=1,...,N}(t_j^N -t_{i-1}^N)$ tends to zero as $N \rightarrow \infty$. Then we get by the previous stability condition for any  $i\geq 1$
\begin{align*}
\mathcal{E}(&t_i, y(t_i))+\Psi \left(y(t_i)-y(t_{i-1})\right) \\
 &= \int_{t_{i-1}}^{t_i} \partial_s \mathcal{E}(s, y(t_i)) \dif s + \mathcal{E}(t_{i-1},y(t_i))+ \Psi \left(y(t_i)-y(t_{i-1})\right) \\
&\geq \int_{t_{i-1}}^{t_i} \partial_s \mathcal{E}(s, y(t_i)) \dif s + \mathcal{E}(t_{i-1},y(t_{i-1})) \\
 &=- \int_{t_{i-1}}^{t_i} \int_Q \dot{f}(s)\cdot u(s) \dif x \dif s + \mathcal{E}(t_{i-1},y(t_{i-1})) .
\end{align*}
Summation over $i=1,...,N$ gives
\begin{align*}
\mathcal{E}(t,y(t)) +& \int_0^t \Psi(\dot{y}(s) ) \dif s  \geq \mathcal{E}(t,y(t))  + \sum_{i=1}^N \Psi \left(y(t_i)-y(t_{i-1})\right) \\
 &\geq - \sum_{i=1}^N  \int_{t_{i-1}}^{t_i} \int_Q \dot{f}(s)\cdot u(s) \dif x \dif s  +  \mathcal{E}(0,y(0)) \\
&\xlongrightarrow{N \rightarrow \infty} \mathcal{E}(0,y(0)) - \int_0^t \int_Q \dot{f}(s)\cdot u(s) \dif x \dif s,
\end{align*}
and with this we finally get the energy balance equation \eqref{eq:energy balance}.
This allows us to pass to the limit:
\begin{align}\label{eq:poof rate}
\lim_{\varepsilon \rightarrow 0} &\left( \mathcal{E}_\varepsilon (t, y_\varepsilon (t)) + \int_0^t \Psi_\varepsilon (\dot{y}_\varepsilon (s)) \dif s  \right) \\
\nonumber
&= \lim_{\varepsilon \rightarrow 0} \left( \mathcal{E}_\varepsilon(0,y_\varepsilon
(0)) - \int_0^t  \varepsilon^d \sum_{x\in \Z^d_\varepsilon} \dot{f}_\varepsilon(s,x) u_\varepsilon(s,x) \dif s\right) \\
\nonumber
&= \mathcal{E}(0,y(0)) - \int_0 ^t \int_{Q}\dot{f} (s,x) \cdot u (s,x) \dif x \dif s \\
\nonumber
&=\mathcal{E}(t,y(t)) +\int_0^t \Psi (\dot{y}(s)) \dif s .
\end{align}
In the last step we have to show that the convergence of $y_\varepsilon =( u_\varepsilon , z_\varepsilon )$ in \eqref{eq:conv_properties1} and \eqref{eq:conv_properties2} does not only hold in the weak but also in the strong sense.
Therefore we construct the recovery sequence
$\hat{y}_\varepsilon \coloneqq (\Raeps u, \Raeps z) $, that converges strongly in the following sense:
\begin{align*}
    \Reps \hat{y}_\varepsilon = (\Reps \Raeps u, \Reps \Raeps z)  &\rightarrow y =(u,z) &&\quad \text{in } L^2(Q)^d \times L^2(Q)^{d \times d} \\
    \Reps \nabla_\varepsilon \hat{y}_\varepsilon = (\Reps \nabla_\varepsilon \Raeps u, \Reps \nabla_\varepsilon \Raeps z ) &\rightarrow \nabla y = (\nabla u, \nabla z) &&\quad \text{in } L^2(Q)^{d\times d} \times L^2(Q)^{d \times d \times d}
\end{align*}
With this we get (omitting the dependence of $u_\varepsilon, \hat{u}_\varepsilon, z_\varepsilon, \hat{z}_\varepsilon, \bold{A, \bold{H}}$ on $x$  for better readability)
\begin{align}\nonumber
&\mathcal{E}^{loc}_\varepsilon (t,y_\varepsilon)- \mathcal{E}^{loc}_\varepsilon (t, \hat{y}_\varepsilon) 
- 2 \varepsilon^{d} \sum_{x\in Q_\varepsilon^d} \left[ \nabla ^s_\varepsilon \hat{u}_\varepsilon : \bold{A}:  \left( \nabla ^s_\varepsilon u_\varepsilon - \nabla ^s_\varepsilon \hat{u}_\varepsilon 
 - \left( z_\varepsilon - \hat{z}_\varepsilon \right) \right) \right]\\ \nonumber
  &\qquad - 2 \varepsilon^{d} \sum_{x\in Q_\varepsilon^d} \left[ \left(z_\varepsilon - \hat{z}_\varepsilon\right) : \bold{H} :  \hat{z}_\varepsilon + \kappa \nabla_\varepsilon \hat{z}_\varepsilon : \left(\nabla_\varepsilon z_\varepsilon - \nabla_\varepsilon \hat{z}_\varepsilon \right)\right] \\ \nonumber
 &=  \varepsilon^{d} \sum_{x\in Q_\varepsilon^d} \left[ \left( \nabla ^s_\varepsilon u_\varepsilon - \nabla ^s_\varepsilon \hat{u}_\varepsilon
 - \left( z_\varepsilon - \hat{z}_\varepsilon \right) \right): \bold{A}:  \left( \nabla ^s_\varepsilon u_\varepsilon - \nabla ^s_\varepsilon \hat{u}_\varepsilon 
 - \left( z_\varepsilon - \hat{z}_\varepsilon \right) \right) \right] \\ \nonumber
& \qquad + \varepsilon^{d} \sum_{x\in Q_\varepsilon^d} \left[ \left(z_\varepsilon - \hat{z}_\varepsilon \right) : \bold{H} :   \left(z_\varepsilon - \hat{z}_\varepsilon \right) + \kappa \left( \nabla_\varepsilon z_\varepsilon - \nabla_\varepsilon \hat{z}_\varepsilon \right):\left( \nabla_\varepsilon z_\varepsilon - \nabla_\varepsilon \hat{z}_\varepsilon \right) \right] \\ \nonumber
&\geq \lambda_{\bold{A}}^{min} \y{\nabla ^s_\varepsilon u_\varepsilon - \nabla ^s_\varepsilon \hat{u}_\varepsilon
 - \left( z_\varepsilon - \hat{z}_\varepsilon \right) }_{\ell^2}^2 + \lambda_{\bold{H}}^{min} \y{ z_\varepsilon - \hat{z}_\varepsilon }_{\ell^2}^2 + \kappa \y{\nabla_\varepsilon z_\varepsilon - \nabla_\varepsilon \hat{z}_\varepsilon }_{\ell^2} \\ \nonumber
& \geq  \lambda_{\bold{A}}^{min} \left( \tfrac{1}{2}\y{\nabla ^s_\varepsilon u_\varepsilon - \nabla ^s_\varepsilon \hat{u}_\varepsilon}_{\ell^2}^2 - \y{ z_\varepsilon - \hat{z}_\varepsilon }_{\ell^2}^2 \right) + \lambda_{\bold{H}}^{min} \y{ z_\varepsilon - \hat{z}_\varepsilon }_{\ell^2}^2  +\kappa \y{\nabla_\varepsilon z_\varepsilon - \nabla_\varepsilon \hat{z}_\varepsilon }_{\ell^2} \\ \nonumber
&= \tfrac{\lambda_{\bold{A}}^{min}}{2}\y{\nabla ^s_\varepsilon u_\varepsilon - \nabla ^s_\varepsilon \hat{u}_\varepsilon}_{\ell^2}^2 + \left( \lambda_{\bold{H}}^{min}- \lambda_{\bold{A}}^{min} \right) \y{ z_\varepsilon - \hat{z}_\varepsilon }_{\ell^2}^2 + \kappa \y{\nabla_\varepsilon z_\varepsilon - \nabla_\varepsilon \hat{z}_\varepsilon }_{\ell^2}\\ 
&\geq  \min \left(\tfrac{\lambda_{\bold{A}}^{min}}{2} , \lambda_{\bold{H}}^{min}- \lambda_{\bold{A}}^{min}, \kappa \right) \left( \y{\nabla ^s_\varepsilon u_\varepsilon - \nabla ^s_\varepsilon \hat{u}_\varepsilon}_{\ell^2}^2 + \y{ z_\varepsilon - \hat{z}_\varepsilon }_{\ell^2}^2 +   \y{\nabla_\varepsilon z_\varepsilon - \nabla_\varepsilon \hat{z}_\varepsilon }_{\ell^2}\right). \label{eq:norm:conv}
\end{align}
The last two terms on the left side of the above equations converge as products of weakly and strongly convergent sequences. Regarding convergence of the first two terms we proceed as follows. From  \eqref{eq:poof rate} we deduce that 
\begin{align*}
\limsup_{\varepsilon \rightarrow 0} \mathcal{E}_\varepsilon (t, y_\varepsilon (t)) + \liminf_{\varepsilon \rightarrow 0} \int_0^t \Psi_\varepsilon (\dot{y}_\varepsilon (s)) \dif s  
=\mathcal{E}(t,y(t)) +\int_0^t \Psi (\dot{y}(s)) \dif s
\end{align*}
and in combination with \eqref{eq:liminf Psi} we get
\begin{align*}
\limsup_{\varepsilon \rightarrow 0 } \mathcal{E}^{loc}_\varepsilon (t, y_\varepsilon) &~\leq \mathcal{E}^{loc} (t, y) + \mathcal{E}^{V} (t, y) - \mathcal{F}(t,y) +\int_0^t \Psi (\dot{y}(s)) \dif s \\
&\qquad - \limsup_{\varepsilon \rightarrow 0 } \mathcal{E}^{V}_\varepsilon (t, y_\varepsilon) + \limsup_{\varepsilon \rightarrow 0 } \mathcal{F}_\varepsilon (t, y_\varepsilon) - \liminf_{\varepsilon \rightarrow 0 } \int_0^t \Psi_\varepsilon (\dot{y}_\varepsilon (s)) \dif s \\
&\overset{\eqref{eq:liminf Psi}}{\leq} \mathcal{E}^{loc} (t, y) + \mathcal{E}^{V} (t, y) - \limsup_{\varepsilon \rightarrow 0 } \mathcal{E}^{V}_\varepsilon (t, y_\varepsilon) - \mathcal{F}(t,y)  + \limsup_{\varepsilon \rightarrow 0 } \mathcal{F}_\varepsilon (t, y_\varepsilon) .
\end{align*}
The last term on the right side of the inequality obviously converges to $\mathcal{F}(t,y)$. With the same argumentation as for equation \eqref{eq:liminf_nonloc} convergence of $\limsup_{\varepsilon \rightarrow 0 } \mathcal{E}^{V}_\varepsilon (t, y_\varepsilon)$ to $ \mathcal{E}^{V} (t, y)$ (up to subsequences) follows.
Hence the previous inequality reduces to
\begin{align*}
\limsup_{\varepsilon \rightarrow 0 } \mathcal{E}^{loc}_\varepsilon (t, y_\varepsilon) \leq  \mathcal{E}^{loc} (t, y).
\end{align*}
Therefore for the first two terms on the left side of \eqref{eq:norm:conv} it holds 
\begin{align*}
\lim_{\varepsilon \rightarrow 0 } \mathcal{E}^{loc}_\varepsilon (t,y_\varepsilon)- \mathcal{E}^{loc}_\varepsilon (t, \hat{y}_\varepsilon)  \leq 0
\end{align*}
and finally we can deduce from  \eqref{eq:norm:conv} that 
\begin{align*}
 \y{\nabla ^s_\varepsilon u_\varepsilon - \nabla ^s_\varepsilon \hat{u}_\varepsilon}_{\ell^2(Q_\varepsilon)^d}^2 &\epsarrow  0,  \\
  \y{ z_\varepsilon - \hat{z}_\varepsilon }_{\ell^2(Q_\varepsilon)^d}^2  &\epsarrow  0, \\
   \y{\nabla _\varepsilon z_\varepsilon - \nabla _\varepsilon \hat{z}_\varepsilon}_{\ell^2(Q_\varepsilon)^d}^2 &\epsarrow  0 .
\end{align*}
We get the desired strong convergence by 
\begin{align*}
&\y{ u - \Reps u_\varepsilon }_{L^2}^2 + \y{ z - \Reps z_\varepsilon }_{L^2}^2 + \y{\nabla u - \Reps \nabla _\varepsilon  u_\varepsilon}_{L^2}^2  + \y{\nabla z - \Reps \nabla _\varepsilon  z_\varepsilon}_{L^2}^2 \\
& \hspace{4pt}\leq C \left( \y{\nabla u - \Reps \nabla _\varepsilon  u_\varepsilon}_{L^2}^2  + \y{ z - \Reps z_\varepsilon}_{L^2}^2 + \y{\nabla z - \Reps \nabla _\varepsilon  z_\varepsilon}_{L^2}^2 \right) \\
&\overset{\text{Korn}}{\leq} C \left( \y{\nabla ^s u - \Reps \nabla ^s_\varepsilon  u_\varepsilon}_{L^2}^2 
+ \y{ z - \Reps z_\varepsilon}_{L^2}^2 + \y{\nabla z - \Reps \nabla _\varepsilon  z_\varepsilon}_{L^2}^2 \right) \\
 & \hspace{4pt}\leq C \Big(
  \y{\nabla ^s u - \Reps \nabla ^s_\varepsilon \hat{u}_\varepsilon}_{L^2}^2    
+  \y{\Reps \nabla ^s_\varepsilon \hat{u}_\varepsilon - \Reps \nabla ^s_\varepsilon u_\varepsilon}_{L^2}^2  + \y{z - \Reps  \hat{z}_\varepsilon}_{L^2}^2 
\\ & \qquad 
+  \y{\Reps \hat{z}_\varepsilon - \Reps z_\varepsilon}_{L^2}^2 
 + \y{\nabla z - \Reps \nabla_\varepsilon \hat{z}_\varepsilon}_{L^2}^2 
+  \y{\Reps \nabla_\varepsilon \hat{z}_\varepsilon - \Reps \nabla_\varepsilon z_\varepsilon}_{L^2}^2 \Big)  \epsarrow 0.
\end{align*}
Recall that we have extracted a converging subsequence $y_{\varepsilon_j}$ at the beginning of the proof, that we did not relabel to ease notation.
So in fact up to now we just showed that there exists a subsequence $y_{\varepsilon_j}$ such that
\begin{align*}
\y{ u - \mathcal{R}^\ast_{\varepsilon_j} u_{\varepsilon_j} }_{L^2}^2, \y{ z - \mathcal{R}^\ast_{\varepsilon_j} z_{\varepsilon_j} }_{L^2}^2, \y{\nabla u - \mathcal{R}^\ast_{\varepsilon_j} \nabla _{\varepsilon_j}  u_{\varepsilon_j}}_{L^2}^2, \y{\nabla z - \mathcal{R}^\ast_{\varepsilon_j} \nabla _{\varepsilon_j}  z_{\varepsilon_j}}_{L^2}^2 \epsarrow 0.
\end{align*}
To complete the proof, we actually need to show convergence of the whole sequence $y_\varepsilon$. This can be concluded by the standard argument. For any subsequence $\varepsilon_k$ we can apply the proof and get a subsequence $\varepsilon_{k_j}$ such that $y_{\varepsilon_{k_j}}$ converges in the sense mentioned above to a solution of the limit rate-independent system. By uniqueness of solutions we can conclude that any subsequence $y_{\varepsilon_k} $ admits a sub-subsequence that converges to the unique limit $y$ and thus the whole sequence $y_\varepsilon$ converges in the sense of \eqref{eq:thm:rate}.

\end{proof}

\subsection*{Outlook}
The mathematical framework established in this work opens promising avenues for future research, both from theoretical and numerical perspectives.
The incorporation of gradient plasticity regularization and the hardening tensor in our model provides a natural foundation for capturing complex material behavior. 
One possible extension would be to include viscoplastic effects by incorporating rate-dependent terms in the dissipation potential $\Psi$, transforming it from a purely rate-independent to a unified rate-dependent formulation.
This extension would enable the modeling of strain-rate sensitivity observed in biological tissues under diverse loading conditions—from high-speed impact scenarios to long-term creep deformation.

From a numerical standpoint, 
the existence result for the discrete systems (\Cref{th:existence of discr solutions}) ensures that for any mesh size $\varepsilon > 0$, the discrete elastoplastic system admits an energetic solution. The existence of continuum solutions that is shown in \Cref{th:existence_of_solutions_continuum_system} establishes that the homogenization limit is well-posed. 
The convergence result (\Cref{thm:rate}) provides rigorous theoretical foundations for developing finite element discretizations of the continuum limit while preserving the essential mathematical structure. 
Moreover, the discrete-to-continuum operator framework offers a systematic approach for constructing structure-preserving numerical schemes that maintain the energetic formulation and stability properties of the continuous problem.
Thus, numerical simulations could be performed to gain a deeper understanding into the effective mechanical properties of fiber reinforced materials.  

\newpage
\section*{Appendix - Auxiliary Lemmas}

\begin{lemma}[Lipschitz continuity of convex functions, see Th. 10.4. in \cite{Rockafellar+1970}] \label{lemma:convex_function_Lipschitz}
Let $f$ be a proper convex function and let $S$ be any closed bounded subset of $\operatorname{ri} (\text{dom} f)$. Then $f$ is Lipschitzian relative to $S$, i.e. there exists a real number $L\geq 0$ sucht that
\begin{align*}
    \Y{f(x)-f(y)} \leq L \Y{x-y} \qquad \forall x,y \in S.
\end{align*}
\end{lemma}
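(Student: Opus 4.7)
The plan is to follow the classical route of Rockafellar: first establish that $f$ is locally bounded on $\operatorname{ri}(\operatorname{dom} f)$, then upgrade local boundedness to local Lipschitz continuity using the monotonicity of secant slopes inherent to convex functions, and finally invoke compactness of $S$ (closed and bounded in finite dimensions) to pass from local to global Lipschitz continuity. The argument is purely convex-analytic and uses no differentiability.

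For local boundedness, fix $x_0 \in \operatorname{ri}(\operatorname{dom} f)$ and work inside the affine hull of $\operatorname{dom} f$. I would build a small simplex $\Delta$ with vertices $v_0,\dots,v_k$ containing $x_0$ in its relative interior and lying entirely inside $\operatorname{ri}(\operatorname{dom} f)$. Convexity immediately gives the upper bound $f(y) \leq M := \max_i f(v_i)$ for all $y \in \Delta$. A matching lower bound comes from the symmetry trick: for $y$ close to $x_0$, the reflected point $y' := 2x_0 - y$ lies in $\Delta$ and the midpoint inequality yields $f(y) \geq 2f(x_0) - f(y') \geq 2f(x_0) - M$. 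Hence $|f| \leq M'$ on some (relative) open ball $B_r(x_0) \subset \operatorname{ri}(\operatorname{dom} f)$.

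For local Lipschitz continuity, suppose $|f| \leq M$ on $B_r(x_0)$. For any $x,y \in B_{r/2}(x_0)$, extend the segment from $x$ through $y$ to a point $z$ on the boundary of $B_r(x_0)$, so that $y = (1-\lambda)x + \lambda z$ with $\lambda = \Y{y-x}/\Y{z-x} \leq (2/r)\Y{y-x}$. Convexity gives
\begin{equation*}
f(y) - f(x) \leq \lambda \bigl(f(z)-f(x)\bigr) \leq 2M\lambda \leq \frac{4M}{r}\Y{y-x},
\end{equation*}
and swapping $x,y$ yields the reverse bound. Thus $f$ is Lipschitz on $B_{r/2}(x_0)$ with constant $L_{x_0} := 4M/r$.

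Finally, for the compactness step, $S$ is compact, so the cover $\{B_{r(x)/2}(x) : x \in S\}$ admits a finite subcover with centers $x_1,\dots,x_N$ and local constants $L_1,\dots,L_N$. The main obstacle here is that local Lipschitzness on each ball does not automatically yield a uniform Lipschitz constant for pairs of points lying in different balls. I would handle this via a Lebesgue number $\delta > 0$ of the subcover: any $x,y \in S$ with $\Y{x-y} < \delta$ lie in a common ball, giving $\Y{f(x)-f(y)} \leq \max_i L_i \cdot \Y{x-y}$; for pairs with $\Y{x-y} \geq \delta$, $f$ is globally bounded on $S$ by some $M_S$ (as $S$ is covered by finitely many balls on which $f$ is bounded), so $\Y{f(x)-f(y)} \leq 2M_S \leq (2M_S/\delta)\Y{x-y}$. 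Setting $L := \max\bigl(\max_i L_i,\, 2M_S/\delta\bigr)$ completes the proof.
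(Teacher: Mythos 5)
The paper does not prove this lemma at all: it is quoted verbatim as Theorem~10.4 of Rockafellar's \emph{Convex Analysis} and used as a black box, so there is no in-paper proof to compare against. Judged on its own, your argument is correct and complete, and it is the standard elementary route rather than Rockafellar's: you establish local boundedness via a small simplex and the reflection (midpoint) trick, upgrade to local Lipschitz continuity by the secant-slope estimate, and then obtain a single constant on the compact set $S$ by a finite subcover combined with a Lebesgue-number/diameter split --- the latter correctly addressing the usual pitfall that pairs of points in different balls are not covered by the local constants (note $S$ is not assumed convex, so a segment-chaining argument would require passing to $\operatorname{conv}(S)$; your split avoids this). Rockafellar's own proof instead sits inside his theory of closed convex functions and leans on earlier results about continuity relative to $\operatorname{ri}(\operatorname{dom} f)$ together with a uniform boundedness argument on a compact neighborhood of $S$; your version is more self-contained and uses only properness, convexity and compactness. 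One cosmetic point: in the local Lipschitz step you place $z$ on the boundary of $B_r(x_0)$ while the bound $\lvert f\rvert\leq M$ was obtained on the open ball; taking $z$ on a sphere of radius strictly between $r/2$ and $r$ (or shrinking $r$ slightly) fixes this without changing the constant's order, and similarly the whole construction should be read in the relative topology of the affine hull of $\operatorname{dom} f$, which is harmless since $S$ lies in that affine hull and the norm restricts.
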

Here $\operatorname{ri}(\cdot)$ denotes the \textit{relative interior} of a set and \textit{proper} means that $-\infty <  f(x)$ for every $x$ and $f(x) < + \infty$ for at least one $x$.

\begin{lemma}[see Lemma 4.2 in \cite{lindqvist1990equation}]\label{lemma:lindqvist}
    Let $p \geq 2$ and $w_i \in \mathbb{R}^d$ for $i=1,2$, then
    \begin{align}\label{eq:lindqvist}
        \Y{w_2}^p \geq \Y{w_1}^p + p \Y{w_1}^{p-2}w_1 \cdot (w_2 - w_1) + \frac{\Y{w_2 -w_1}^p}{2^{p-1}-1}.
    \end{align}
\end{lemma}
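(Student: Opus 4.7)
The statement is the classical $p$-uniform convexity inequality for $w\mapsto\|w\|^p$ on $\mathbb R^d$, valid for $p\geq 2$. The plan is to rewrite the quantity $D:=\|w_2\|^p-\|w_1\|^p-p\|w_1\|^{p-2}w_1\cdot(w_2-w_1)$ as an integral of the $p$-duality map $w\mapsto\|w\|^{p-2}w$ along the segment from $w_1$ to $w_2$, and then apply a pointwise strong-monotonicity bound for that map.

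Setting $h:=w_2-w_1$ and $v(t):=w_1+th$, the fundamental theorem of calculus applied to $t\mapsto\|v(t)\|^p$ yields $\|w_2\|^p-\|w_1\|^p=p\int_0^1\|v(t)\|^{p-2}v(t)\cdot h\,dt$ (with the convention $\|0\|^{p-2}\cdot 0 = 0$, legitimate for $p\geq 2$). Subtracting $p\|w_1\|^{p-2}w_1\cdot h$ from both sides and noting $v(t)-w_1=th$, so $h=(v(t)-w_1)/t$ for $t\in(0,1]$, gives the identity
\begin{equation*}
D=p\int_0^1\frac{1}{t}\bigl(\|v(t)\|^{p-2}v(t)-\|w_1\|^{p-2}w_1\bigr)\cdot(v(t)-w_1)\,dt.
\end{equation*}

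I then plan to invoke the classical strong-monotonicity estimate for the $p$-duality map: for $p\geq 2$ and all $a,b\in\mathbb R^d$,
\begin{equation*}
\bigl(\|a\|^{p-2}a-\|b\|^{p-2}b\bigr)\cdot(a-b)\geq 2^{2-p}\|a-b\|^p.
\end{equation*}
Applied pointwise with $a=v(t)$ and $b=w_1$, so $\|a-b\|=t\|h\|$, and integrated using $\int_0^1 t^{p-1}dt=1/p$, this produces $D\geq p\cdot 2^{2-p}\|h\|^p\cdot\tfrac{1}{p}=2^{2-p}\|h\|^p$. The elementary estimate $2^{p-1}-1\geq 2^{p-2}$ for $p\geq 2$ (with equality at $p=2$) then yields $2^{2-p}=1/2^{p-2}\geq 1/(2^{p-1}-1)$, giving \eqref{eq:lindqvist}.

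The main obstacle is the strong-monotonicity inequality itself, which is nontrivial though standard in $p$-Laplacian theory. A self-contained proof would reduce to the two-dimensional plane spanned by $a$ and $b$, introduce polar coordinates around the midpoint $(a+b)/2$, and verify the resulting one-variable estimate by elementary calculus on $[0,\pi]$. As a cross-check, at $p=2$ both the strong-monotonicity bound and \eqref{eq:lindqvist} reduce to the parallelogram identity $\|w_2-w_1\|^2=\|w_2\|^2-\|w_1\|^2-2w_1\cdot(w_2-w_1)$, and the constant $1/(2^{p-1}-1)=1$ is sharp. An alternative, more hands-on route that avoids the monotonicity lemma is to write $D=\int_0^1(1-t)\varphi''(t)\,dt$ for $\varphi(t)=\|v(t)\|^p$, compute $\varphi''(t)=p\|h\|^2\bigl[(p-1)r(t)^{p-2}-(p-2)\|w_1^\perp\|^2 r(t)^{p-4}\bigr]$ where $r(t)=\|v(t)\|$ and $w_1^\perp$ is the component of $w_1$ orthogonal to $h$, and then perform a delicate case analysis on the resulting scalar integral to recover the sharp constant; this is the route of Lindqvist in \cite{lindqvist1990equation}.
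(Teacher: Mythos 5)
Your argument is correct, and it takes a different route from the one the paper points to: the paper offers no proof at all but simply cites Lindqvist's Lemma 4.2, whose original argument is the second route you sketch (Taylor with integral remainder for $\varphi(t)=\Y{w_1+t(w_2-w_1)}^p$ and a case analysis on the scalar integral). Your main route instead writes the defect $D$ as $p\int_0^1 t^{-1}\bigl(\Y{v(t)}^{p-2}v(t)-\Y{w_1}^{p-2}w_1\bigr)\cdot\bigl(v(t)-w_1\bigr)\,\mathrm{d}t$ and feeds in the strong-monotonicity bound $\bigl(\Y{a}^{p-2}a-\Y{b}^{p-2}b\bigr)\cdot(a-b)\geq 2^{2-p}\Y{a-b}^p$; the integration and the final comparison $2^{2-p}\geq (2^{p-1}-1)^{-1}$ for $p\geq 2$ are both correct, and your intermediate bound $D\geq 2^{2-p}\Y{w_2-w_1}^p$ is in fact stronger than \eqref{eq:lindqvist} for $p>2$. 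What this buys is a clean reduction to a single classical vector inequality and a transparent constant bookkeeping; what it costs is that all the real work is hidden in that inequality, which is not easier than the statement itself.

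One caveat you should make explicit: the constant $2^{2-p}$ in the monotonicity inequality is essential for your chain, and the ``easy'' proofs of strong monotonicity do not deliver it. The algebraic identity $\bigl(\Y{a}^{p-2}a-\Y{b}^{p-2}b\bigr)\cdot(a-b)=\tfrac12\bigl(\Y{a}^{p-2}+\Y{b}^{p-2}\bigr)\Y{a-b}^2+\tfrac12\bigl(\Y{a}^{p-2}-\Y{b}^{p-2}\bigr)\bigl(\Y{a}^2-\Y{b}^2\bigr)$ only yields the constant $2^{1-p}$, and the naive integral representation of the duality map (dropping the $(p-2)$-term) yields $2^{2-p}/(p-1)$; after your integration these give $D\geq 2^{1-p}\Y{h}^p$ resp. $D\geq 2^{2-p}(p-1)^{-1}\Y{h}^p$, which fail to dominate $(2^{p-1}-1)^{-1}\Y{h}^p$ as soon as $p\geq 3$. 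So you must invoke (or prove) the sharp-constant version, e.g.\ as in Lindqvist's lecture notes on the $p$-Laplace equation, whose proof involves exactly the kind of case analysis you defer. Given that the paper itself only cites the source, this is an acceptable level of completeness, but your proof should cite the $2^{2-p}$ inequality precisely rather than as generic ``standard monotonicity''.
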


\begin{lemma}
   Let $v:\Omega\subset\mathbb{R}^{d} \rightarrow \mathbb{R}^{d \times d} $ and $\bold{A}$ be a positive and symmetric fourth-order tensor, then for any $x \in \Omega$ the inequality
   \begin{align}\label{eq_eigenvalues}
       \lambda_{\bold{A}}^\mathrm{min} \Y{v(x)}^2 \leq v(x) : \bold{A}(x) :v(x) \leq \lambda_{\bold{A}}^\mathrm{max} \Y{v(x)}^2
   \end{align}
   is valid, where $\lambda_{\bold{A}}^\mathrm{min}$ and $\lambda_{\bold{A}}^\mathrm{max}$ are the smallest and largest eigenvalues of $\bold{A}$, respectively.
\end{lemma}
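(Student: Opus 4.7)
The plan is to reduce the statement to the standard Rayleigh--Ritz bound for a symmetric operator on a finite-dimensional inner product space. Fix $x\in\Omega$ and view the fourth-order tensor $\bold{A}(x)$ as the linear map $\mathsf{A}\colon \mathbb{R}^{d\times d}\to\mathbb{R}^{d\times d}$ defined by $(\mathsf{A}M)_{ij}=\bold{A}_{ijkl}(x)M_{kl}$, where $\mathbb{R}^{d\times d}$ carries the Frobenius inner product $\langle M,N\rangle\coloneqq M:N=\sum_{ij}M_{ij}N_{ij}$, whose induced norm is exactly $\Y{\cdot}$. The hypothesis that $\bold{A}(x)$ is symmetric (i.e., $\bold{A}_{ijkl}=\bold{A}_{klij}$) translates into $\mathsf{A}$ being self-adjoint with respect to this inner product, and positivity translates into $\langle M,\mathsf{A}M\rangle\geq 0$ with equality only for $M=0$.

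Next I would invoke the spectral theorem for the self-adjoint operator $\mathsf{A}$ on the $d^{2}$-dimensional Euclidean space $(\mathbb{R}^{d\times d},\langle\cdot,\cdot\rangle)$. This yields an orthonormal eigenbasis $\{E_{k}\}_{k=1}^{d^{2}}$ with real eigenvalues $\lambda_{k}$; by the definition given in the statement, $\lambda_{\bold{A}}^\mathrm{min}=\min_{k}\lambda_{k}$ and $\lambda_{\bold{A}}^\mathrm{max}=\max_{k}\lambda_{k}$. Expanding $v(x)=\sum_{k}c_{k}E_{k}$ with $c_{k}=v(x):E_{k}$, Parseval gives $\Y{v(x)}^{2}=\sum_{k}c_{k}^{2}$ and a direct computation using bilinearity and orthonormality gives $v(x):\bold{A}(x):v(x)=\langle v(x),\mathsf{A}v(x)\rangle=\sum_{k}\lambda_{k}c_{k}^{2}$.

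The claim then follows from the elementary termwise estimate
\begin{equation*}
\lambda_{\bold{A}}^\mathrm{min}\sum_{k}c_{k}^{2}\;\leq\;\sum_{k}\lambda_{k}c_{k}^{2}\;\leq\;\lambda_{\bold{A}}^\mathrm{max}\sum_{k}c_{k}^{2},
\end{equation*}
after substituting the identifications from the previous paragraph. There is no real obstacle here; the only thing to be careful about is the bookkeeping that the Frobenius inner product is the correct pairing to make $\bold{A}(x)$ self-adjoint, so that the spectral theorem actually applies and the eigenvalues referenced on the right-hand side of \eqref{eq_eigenvalues} coincide with those appearing in the Rayleigh quotient $\langle M,\mathsf{A}M\rangle/\langle M,M\rangle$. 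Since the result must hold for any admissible $v$, one simply applies the above pointwise at each $x\in\Omega$.
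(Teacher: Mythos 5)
Your proposal is correct and follows essentially the same route as the paper: the paper bounds the Rayleigh quotient $v:\bold{A}:v/\Y{v}^2$ by the extreme eigenvalues via the Courant--Fischer--Weyl min-max principle, while you obtain the identical bound by spelling out its standard proof (spectral theorem for the self-adjoint operator induced by $\bold{A}$ on $\mathbb{R}^{d\times d}$ with the Frobenius inner product, followed by the termwise estimate). No gap; your version is merely more self-contained in deriving the Rayleigh bound rather than citing it.
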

\begin{proof}
    As a direct consequence of the Courant–Fischer–Weyl min-max principle we have
    \begin{align*}
        \lambda_{\bold{A}}^\mathrm{min} \leq R_\bold{A} (v) \leq \lambda_{\bold{A}}^\mathrm{max},
    \end{align*}
    where $R_\bold{A}$ is the Rayleigh quotient defined as
    \begin{align*}
        R_\bold{A}v = \frac{v:\bold{A}:v }{\Y{v}^2}.
    \end{align*}
\end{proof}

\begin{lemma}[see Section 3 in \cite{Bihari}] \label{lemma:LaSalle}
    Let $Y(x),F(x)$ be positive continuous functions in $a\leq x \leq b$, let $k,M \geq 0$ and let $w(u)$ be a non-negative non-decreasing continuous function for $u\geq 0$. Then the inequality
    \begin{align*}
        Y(x) \leq k+ M \int_a^x F(t)w(Y(t)) \dif t, \quad a\leq x \leq b
    \end{align*}
    implies the inequality 

    \begin{align*}
        Y(x) \leq \Psi^{-1}\left( \Psi(k) + M \int_a^x F(t) \dif t \right), \quad a \leq x \leq b ,
    \end{align*}
    where 
    \begin{align*}
        \Psi(u) = \int_{u_0}^u \frac{\dif t }{w(t)}, \quad u_0>0, u\geq 0.
    \end{align*}
\end{lemma}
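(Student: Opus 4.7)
The plan is to follow the classical Bihari argument, which generalizes the standard Grönwall proof by using separation of variables for the auxiliary majorant. The core idea is to reduce the integral inequality to a differential inequality that can be integrated explicitly via the substitution hidden in the definition of $\Psi$.

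First I would introduce the right-hand side as a new function
\begin{align*}
\phi(x) \coloneqq k + M \int_a^x F(t) w(Y(t)) \dif t, \qquad a \le x \le b,
\end{align*}
so that $Y(x) \le \phi(x)$ by hypothesis, $\phi(a) = k$, and $\phi$ is non-decreasing (since $F, w, Y \ge 0$). By continuity of $F, w, Y$, the fundamental theorem of calculus gives $\phi'(x) = M F(x) w(Y(x))$. Using that $w$ is non-decreasing and that $Y(x) \le \phi(x)$, I get
\begin{align*}
\phi'(x) = M F(x) w(Y(x)) \le M F(x) w(\phi(x)).
\end{align*}

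Next I would divide by $w(\phi(x))$ (which is legitimate once I ensure $w(\phi(x)) > 0$; see below) to obtain
\begin{align*}
\frac{\phi'(x)}{w(\phi(x))} \le M F(x),
\end{align*}
and integrate from $a$ to $x$. Via the change of variables $u = \phi(t)$, the left-hand side becomes $\int_{\phi(a)}^{\phi(x)} \frac{\dif u}{w(u)} = \Psi(\phi(x)) - \Psi(k)$ by the definition of $\Psi$. Hence
\begin{align*}
\Psi(\phi(x)) \le \Psi(k) + M \int_a^x F(t) \dif t.
\end{align*}
Since $\Psi'(u) = 1/w(u) \ge 0$, the function $\Psi$ is non-decreasing, so its inverse $\Psi^{-1}$ (restricted to the range where it is defined) is also non-decreasing. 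Applying $\Psi^{-1}$ to both sides gives $\phi(x) \le \Psi^{-1}\!\bigl(\Psi(k) + M\int_a^x F(t) \dif t\bigr)$, and combining with $Y(x) \le \phi(x)$ yields the claimed bound.

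The main obstacle, and the only place where real care is needed, is the degenerate case $k = 0$ or $w(0) = 0$: then $\Psi(k)$ may be $-\infty$ or the division by $w(\phi(x))$ may fail at $x = a$. The standard workaround is to replace $k$ by $k_\varepsilon \coloneqq k + \varepsilon$ for $\varepsilon > 0$, note that $Y(x) \le k_\varepsilon + M\int_a^x F(t) w(Y(t)) \dif t$ as well, apply the argument above to obtain $Y(x) \le \Psi^{-1}(\Psi(k_\varepsilon) + M\int_a^x F \dif t)$, and pass to the limit $\varepsilon \to 0$ using continuity of $\Psi$ and $\Psi^{-1}$ on their domains. In this approximating setup $\phi_\varepsilon(a) = k_\varepsilon > 0$ and monotonicity of $\phi_\varepsilon$ ensure $w(\phi_\varepsilon(x)) \ge w(k_\varepsilon) > 0$ whenever it is strictly positive, making every step rigorous. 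Everything else is routine computation.
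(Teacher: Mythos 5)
Your proof is correct and is exactly the classical Bihari argument (majorant $\phi$, reduction to a differential inequality, separation of variables via $\Psi$, then monotonicity of $\Psi^{-1}$, with the $\varepsilon$-shift $k\mapsto k+\varepsilon$ for the degenerate case); the paper itself gives no proof of this lemma, quoting it directly from the cited reference \cite{Bihari}, and your argument coincides with the standard proof found there. No gaps worth flagging beyond the implicit (and harmless) assumption that $w$ is positive where $\Psi$ is evaluated, which is already needed for $\Psi$ in the statement to make sense.
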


\subsubsection*{Acknowledgments}
This work was funded by the Deutsche Forschungsgemeinschaft (DFG, German Research Foundation) under Germany’s Excellence Strategy – EXC-2193/1 – 390951807.

\newpage
\end{document}